\theoremstyle{plain}
\newtheorem{thm}{Theorem}[section]
\newtheorem{lem}[thm]{Lemma}
\newtheorem{prop}[thm]{Proposition}
\theoremstyle{definition}
\newtheorem{defn}{Definition}[section]
\newtheorem{exmp}{Example}[section]
\newtheorem{notation}{Notation}[section]
\newtheorem{assumption}{Assumption}[section]
\theoremstyle{remark}
\newtheorem*{rem}{Remark}
\newcommand\E{\mathcal{E}}
\newcommand\D{\mathcal{D}}
\newcommand\N{\mathbb{N}}
\newcommand\R{\mathbb{R}}
\newcommand\dx{\mathrm{d}x }
\newcommand\dy{\mathrm{d}y }
\newcommand\dt{\mathrm{d}t }
\newcommand\loc{\mathrm{loc}}
\newcommand\td{\mathrm{d} }
\newcommand\Euc{\!\mathrm{Euc}}
\newcommand\abs[1]{\lvert#1\rvert}
\newcommand\labs[1]{\left\lvert#1\right\rvert}
\newcommand\norm[1]{\lVert#1\rVert}
\DeclareMathOperator*{\divi}{div}
\DeclareMathOperator{\supp}{supp}
\DeclareMathOperator{\dist}{dist}
\DeclareMathOperator{\Lie}{Lie}
\DeclareMathOperator*{\inner}{Inner}
\thanks{Both author's research was supported by the Australian Research
  Council grant DP200101065. In addition, the first author's research
  was supported by the Australian Research
  Council grant DP220100067. Both authors are grateful for the support.}
\subjclass[2020]{58J35, 47H06, 47H20, 47H06, 35J92.}
\keywords{p-Laplace operator, $L^1$-$L^{\infty}$-estimates, separation, sub-Riemannian,
  Gru\v{s}in, harmonic analysis, nonlinear semigroups}
\begin{document}

\title{Regularity and Separation for $p$-Laplace operators}
\author{Daniel Hauer and Adam Sikora}

\address{Daniel Hauer, School of Mathematics and Statistics, The University of Sydney, Sydney, NSW, 2006, Australia }
\email{daniel.hauer@sydney.edu.au}
\address{
	Adam Sikora, School of Mathematical and Physical Sciences, Macquarie University, NSW 2109, Australia}
\email[Corresponding author]{adam.sikora@mq.edu.au }

\date{\today}

\begin{abstract}
  We analyze $p$-Laplace operators with degenerate elliptic
  coefficients. This investigation includes Gru\v{s}in type $p$-Laplace
  operators. We describe a \emph{separation phenomenon} in elliptic and parabolic 
  $p$-Laplace type equations, which provides an illuminating illustration of
  simple jump discontinuities of the corresponding weak solutions. Interestingly
  validity of an isoperimetric inequality for considered setting does
  not imply continuity of elliptic equations. On the other hand, we are
  able to establish global $L^1$-$L^\infty$-regularization and decay
  estimates of every mild solution of the parabolic Gru\v{s}in type $p$-Laplace equation.
\end{abstract}

\maketitle

{\it In memoriam of Derek W. Robinson.}

\section{Introduction}

Studies of problems involving the $p$-Laplace operator are a central and
crucial part of the theory of Partial Differential Equations
(PDEs). Such operators are second order non-linear generalizations of
the standard Laplace operator, which is arguably fundamental and the
most significant linear elliptic operator. The $p$-Laplace equation has
attracted intense attention during the last fifty or so years and its
theory is by now developed and well understood, see
e.g. \cite{LiP,MR1230384,MR2865434,MR2722059} and reference within.

Our main interest in this project is related to $p$-Laplace operators
corresponding to degenerate elliptic systems and geometry. It is a
worthy attention point because $p$-Laplace operators is of degenerate or 
singular form themselves. Indeed, in the Euclidean $\R^d$ setting these
operators can be written down as
\begin{displaymath}
  \Delta_{p}=\nabla\cdot\left(\abs{\nabla f}^{p-2}\nabla f\right),
\end{displaymath}
where $f\in C_c^\infty(\R^d)$ and $\nabla f$ is the standard Euclidean gradient,
compare \eqref{eq:p-A-Laplacian} below.  Clearly, for $p>2$ this
divergence equation is degenerate when $|\nabla f|=0$, and becomes
singular in region where $\nabla f$ vanishes. Hence considering
the degenerate elliptic system and the corresponding gradients for
$p$-Laplace operators is especially interesting, which it plays a
crucial role in nonlinear potential theory (see, e.g., \cite{MR1207810}).

We specifically investigate (generalized) Gru\v{s}in type $p$-Laplace
operators. Classical Gru\-\v{s}hin-type operators are a class of linear
degenerate elliptic operators (cf. Example~\ref{ex:1}~\ref{ex:1-classical-grushin})
that arise in the study of PDEs and functional inequalities (for
examples, in the Euler equation associated associated with a weighted
Sobolev inequality, see, e.g., \cite{MR2273962} or
\cite{MR2778606}). These operators often have specific forms and
properties that make them significant objects of our study. In the
classical Gru\v{s}in space setting, the linear case $p=2$ has been
studied extensively (see, e.g., \cite{MR2219239,RSik,Rsik2,DS}), but
only a few results have been worked out in the general case $1<p<\infty$
(see, e.g., \cite{MR2240671,MR4598995}). In \emph{generalized Gru\v{s}in
  space} setting (see Example~\ref{ex:1}~\ref{ex:gen-grusin} below),
even in the linear case $p=2$ only a few but crucial results are known;
these include the validity of a Nash inequality (see~\cite{RSik}) and
the failure of a local $L^2$-Poincar\'e inequality
(see~\cite{MR3250798}). To the best of our knowledge, similar results
for general case $1<p<\infty$ in this setting did not exist in the
literature so far.\medskip

In this article, we aim to change this by proving Nash and Sobolev
inequalities in the generalized Gru\v{s}in spaces setting for $p\ge 2$
(see Theorem~\ref{thm:Grushin-Nash-inequality} in
Section~\ref{subsec:sub-elliptic-Grushin}). For this, we employ
techniques developed in linear harmonic analysis to investigate
nonlinear $p$-Laplace equations of elliptic and parabolic type. A
valuable example of such approach is described, for instance, by Bortz,
Egert and Saari in \cite{BEO19} or \cite{MR4242322}. The $L^p$-Sobolev
inequality is a major tool to achieve $L^q$-$L^\infty$ regularization
effect, $1\le q<\infty$, for the semigroup generated by the
$p$-Laplacian, see \cite[Theorem~1.1]{CoulHau2017} and Theorem
\ref{thm65} below. It is worth mentioning that the degeneracy of the
generalized Gru\v{s}in space occurs in a higher dimension $D\ge d$,
which decreases the Sobolev-exponent $p^{\ast}=Dp/(D-p)$. Obviously, our
results generalize the known ones obtained in the classical Euclidean case
by V\'eron~\cite{MR554377}. For a relevant discussion of Sobolev
inequalities and the regularizing effect of solutions of parabolic
$p$-Laplace equations see, e.g., \cite{CoulHau2017}.

To introduce the geometry in our nonlinear PDEs, we first outline the
notion of degenerate Riemannian structures $(\R^d,\bm{A})$ on the
Euclidean space $\R^d$, $d\in \N$, see
Section~\ref{sec:sub-riemannian}. Here, $\bm{A}$ is the bundle of
symmetric endomorphisms, which defines 
the \emph{horizontal gradient} $\nabla_{\!\bm{A}(x)}f$ by setting
$\nabla_{\!\bm{A}(x)}f(x):=\bm{A}(x)\nabla_{\Euc}f(x)$ for every $x\in
\R^d$. Note that the matrix $\bm{A}(x)$ might degenerate at certain $x\in
\R^d$. This framework is sufficient to
introduce several sub-Riemannian spaces including the classical
Gru\v{s}in space and the Heisenberg group, or the generalized Gru\v{s}in
space, or spaces with monomial weights (see Example~\ref{ex:1} in
Section~\ref{subsec:examples}).

In Section~\ref{sec:spaces}, we introduce Soboleve spaces involving the horizontal gradient
$\nabla_{\!\bm{A}(x)}f$ in a weak sense. Under some reasonable assumptions,
we show that these spaces are complete, reflexive, and separable. These properties
are fundamental for realizing the \emph{horizontal $p$-Laplace operator}
\begin{displaymath}
  \Delta_p^{\bm{A}}f=\nabla\cdot (\abs{\nabla_{\!\bm{A}}f}_{\bm{A}}^{p-2}\nabla_{\!\bm{A}}f)
\end{displaymath}
associated with a given Riemannian structure $(\R^d,\bm{A})$. In Section
\ref{sec:carre-du-champ}, we expain how to employ the notion of the
\emph{carr\'e du champ} to highlight the underlying geometric structure
in the nonlinear equations governed by $\Delta_p^{\bm{A}}$. 
The carr\'e du champ is the significant notion in Dirichlet forms
theory, see e.g.~\cite{BH}, and another example for that we use tools
based on linear harmonic analysis theory to discuss ,

In the generalized Gru\v{s}in space setting, we study a \emph{phenomenon
  of separation}, which provides, a geometrically unexpected,
illuminating illustration of the limit of regularity results. An example
of such a phenomenon can be described even in one dimension. For
functions $f\in C^\infty_c(\R)$ we consider gradient
$\nabla_\alpha f(x) = |x|^\alpha f'(x)$, see Example \ref{ex41}
below. For any $0 \le \alpha <2$ the matching distance is finite for any
two points. However, for any $p$ such that $\alpha > 2/p'$, where
$p'=p/(p-1)$, the corresponding $p$-Laplace equation separates into two
totally independent systems acting on $(-\infty,0)$ and $(0,\infty)$,
see Lemma~\ref{lem:5} below. It follows that, in general, corresponding
elliptic or parabolic solutions are not necessarily continuous at the
point $x=0$. This is an interesting generalization of the separation
phenomenon for linear operators corresponding to the case $p$=2, see
\cite[Propositions 6.5 and 6.12]{ERSZ} and \cite[Proposition
6.10]{Rsik2}. A significant side of the separation phenomenon is that it
shows that the Muckenhoupt weights class order requirement for
continuity of $p$-Laplace equation solutions verified in
\cite{MR1207810} is sharp, see Section \ref{sec:separation-elliptic}
below. It is interesting to point out that the $L^1$-$L^{\infty}$
regularization effect of the semigroup generated by the Gru\v{s}in type
$p$-Laplace operator is not affected by the separation phenomenon.

We note that a similar \emph{phenomenon of separation} happens in spaces
of monomial weights (see Example~\ref{ex:1}~\ref{ex:monomial-weight}),
for which one knows that an isoperimetric inequality holds true
(cf.~\cite{MR3097258}), implying an $L^1$-$L^{\infty}$
regularization effect of the semigroup generated by the associated
$p$-Laplace operator in this setting (see Theorem~\ref{thm66-monomial}). It
a significant observation that validity of an isoperimetric inequality
does not imply continuity of elliptic equations.  For a description of
relevant characteristic of isoperimetric inequality to our discussion
see \cite{MR3097258}, see also \cite{CoSC}.

In this part of our project, the obtained results can be likely extended
but these developments will be investigate in some future extension of
this project.

\section{Some preliminaries - our framework} 

The purpose of this section is to outline how to introduce the geometry
into the elliptic and parabolic $p$-Laplace type equations. For this, we
use the notion of Riemannian structures $(\R^d,\bm{A})$. 

\subsection{A Riemannian structure}
\label{sec:sub-riemannian}

For $d\in \N$, let the pair $(\R^d,\bm{A})$ consisting of the Euclidean
space $\R^d$ and a symmetric, positive semi-definite matrix function
\begin{displaymath}
    \bm{A}\in L^{\infty}_{\loc}(\R^d;\R^{d\times d}).    
\end{displaymath}
In the following, we refer to such a pair as a \emph{Riemannian
  structure}.\medskip

In order to explain the class of degenerate 
diffusion operators of \emph{$p$-Laplace type}
\begin{displaymath}
   \Delta_{p}^{\!\bm{A}}f
:=\nabla\cdot\left(\abs{\nabla_{\!\bm{A}}f}_{\bm{A}}^{p-2}\nabla_{\!\bm{A}}f\right),
\end{displaymath}
induced by a Riemannian structure $(\R^d,\bm{A})$, we briefly summarize
the most relevant notions from Riemannian geometry. To do this, we begin
by considering the regular case.

\subsubsection{The elliptic setting.}
If the matrix $\bm{A}$ is assumed to be positive-definite on $\R^d$,
written $\bm{A}(x)>0$ for $x\in \R^d$, or more precisely,
\begin{equation}
    \label{eq:positive-definiteness}
 \xi^T\bm{A}(x)\xi>0\qquad\text{for all 
$x\in \R^d$, $\xi\in\R^d\setminus\{0\}$.}      
\end{equation}
Then, the pair $(\R^{d},\bm{g})$ consisting of the Euclidean space
$\R^d$ and the function $\bm{g} : \R^d\to \inner(\R^d)$ given by
\begin{equation}
    \label{eq:def-of-g}
  \bm{g}(x)=\langle\cdot, \cdot\rangle_{\bm{A}(x)}\qquad
  \text{for every $x\in \R^d$,}
\end{equation}
forms a classical $d$-dimensional Riemannian manifold with (Riemannian)
\emph{metric} $\bm{g}$. Here $\inner(\R^d)$ denotes the set of inner
products on $\R^d$. The manifold $(\R^{d},\bm{g})$ is determined through
the matrix $\bm{A}$. Note, in \eqref{eq:def-of-g} the inner products
$\langle\cdot, \cdot\rangle_{\bm{A}(x)}$ are defined by
\begin{equation}
    \label{eq:9}
   \langle X, Y\rangle_{\bm{A}(x)}:= \langle \bm{A}^{-1}(x)X,Y\rangle_{\Euc} 
\end{equation}
for every $X$, $Y\in \R^{d}$, and for $X$, $Y\in \R^d$, the bracket
$\langle X,Y \rangle_{\Euc}:=\sum_{i=1}^{d}X_i\overline{Y}_i$ refers
to the classical Euclidean inner product on $\R^{d}$, and we write
$\abs{X}$  or $\abs{X}_{\Euc}=\sqrt{\langle X,X \rangle_{\Euc}}$ to
denote the induced
Euclidean norm on $\R^d$.

\begin{rem}
  It is worth mentioning that in the framework outlined here, the
  Euclidean inner product $\langle \cdot,\cdot \rangle_{\Euc}$ could
  be replaced by any other inner product provided by some
  $d$-dimensional Riemannian manifold.
\end{rem}

\begin{notation}\label{not:square-root}
  The associated \emph{norm} $\abs{\cdot}_{\bm{A}}$ induced by the
  matrix $\bm{A}$ is then given by
\begin{equation}
 \label{eq:10}
  \abs{X}_{\bm{A}}=\sqrt{\langle X,X\rangle_{\bm{A}}}= \abs{\bm{B}^{-1}X}_{\Euc}
\end{equation}
where the matrix $\bm{B}$ is the so-called \emph{square root} of
$\bm{A}$, meaning $\bm{B}$ is the unique matrix of $\bm{A}$ satisfying
\begin{equation}
    \label{eq:11}
  \bm{B}^2(x)=\bm{A}(x)
\end{equation}
for every $x\in \R^d$. Note, if $\bm{A}(x)>0$ then also $\bm{B}$ satisfies $\bm{B}(x)>0$.
\end{notation}

\subsubsection{The gradient associated with a Riemannian structure} Next,
we introduce the notion of the gradient associated with a Riemannian
structure $(\R^d,\bm{A})$. For this note, we write either
$\nabla_{\Euc}$ or, simply, $\nabla$ and
$\tfrac{\partial }{\partial x_i}$ in order to denote the \emph{gradient}
and the \emph{partial derivatives} with respect to standard \emph{Euclidean} coordinates.

\begin{defn}
 For a given vector-field $X$ of $\R^d$, that is,
 \begin{displaymath}
    X=\sum_{i=1}^{d}\alpha_{i}\tfrac{\partial }{\partial x_i}
 \end{displaymath}
 for some coefficients $\alpha_{1}, \dots, \alpha_{d}\in \R$, and for every 
 $f\in C^{1}$, the \emph{action of $X$ on $f$ at $x\in \R^d$} is defined by 
 \begin{displaymath}
   Xf_{\vert x}=\sum_{i=1}^{d} \alpha_i \frac{\partial f}{\partial x_i}(x).
 \end{displaymath}
\end{defn}

\begin{rem}
 For given every $f\in C^{1}$ and vector-field $X$ of $\R^d$,
 the action $Xf_{\vert x}$ in \emph{local coordinates} can be
 interpreted as 
 \begin{equation}
    \label{eq:action-local}
  Xf_{\vert x}= \langle  \nabla f(x), X \rangle_{\Euc} 
 \end{equation}    
 Note, throughout this section, when we speak about \emph{local
   coordinates} then we mean \emph{Euclidean coordinates}.
\end{rem}

\begin{defn}
  For a given Riemannian structure $(\R^{d},\bm{A})$ and
  $f\in C^{1}(\R^{d})$, we call $\nabla_{\!\bm{A}}f$ defined by
\begin{equation}
 \label{eq:7}
 \langle  \nabla_{\!\bm{A}}f(x), X \rangle_{\bm{A}(x)}=Xf_{\vert x}
\end{equation}
for every vector-field $X$ of $\R^d$ and $x\in \R^{d}$, the
\emph{gradient of $f$ associated with the Riemannian structure
  $(\R^{d},\bm{A})$}, or shorter, \emph{gradient of $f$ associated with
  $\bm{A}$.}
\end{defn}

Applying \eqref{eq:action-local} to \eqref{eq:7}, one sees that for
given $f\in C^{1}$, the gradient $\nabla_{\!\bm{A}}f$ in local
coordinates can be characterized by
 \begin{displaymath}
  \langle \nabla_{\!\bm{A}}f, X \rangle_{\bm{A}}=
  \langle \nabla f, X \rangle_{\Euc}  
 \end{displaymath}
for every vector field $X\in \R^{d}$ and so,~\eqref{eq:9} yields that
\begin{displaymath}
  \langle \bm{A}^{-1}\nabla_{\!\bm{A}}f,X\rangle_{\Euc}
  = 
\langle  \nabla f, X \rangle_{\Euc}
\end{displaymath}
for every vector field $X\in \R^{d}$
. Therefore, we have the following description of the gradient
$\nabla_{\!\bm{A}}f$ in local coordinates.

\begin{prop}[{The Gradient in local coordinates}]
 For a given Riemannian structure $(\R^{d},\bm{A})$ and $f\in
 C^{1}(\R^d)$, the \emph{gradient} $\nabla_{\!\bm{A}}f$ in local
 coordinates can be rewritten by
\begin{equation}
 \label{eq:def-gradient-in-singular-case}
    \nabla_{\!\bm{A}}f= \bm{A}\nabla f
\end{equation}
\end{prop}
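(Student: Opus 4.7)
The plan is to unwind the two definitions already set up in the preceding paragraphs and then invoke non-degeneracy of the Euclidean inner product to strip off the test vector field. Since the proposition concerns the elliptic (positive-definite) setting, $\bm{A}(x)$ is invertible at every $x$, so no delicate limiting argument is required.

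First I would fix $x \in \R^d$ and start from the defining identity \eqref{eq:7}, namely
\begin{displaymath}
  \langle \nabla_{\!\bm{A}}f(x), X \rangle_{\bm{A}(x)} = Xf_{\vert x},
\end{displaymath}
which holds for every vector field $X$ of $\R^d$. Next, I would rewrite both sides in Euclidean coordinates: the right-hand side becomes $\langle \nabla f(x), X \rangle_{\Euc}$ via \eqref{eq:action-local}, while the left-hand side, by the definition \eqref{eq:9} of the inner product induced by $\bm{A}(x)$, equals $\langle \bm{A}^{-1}(x)\,\nabla_{\!\bm{A}}f(x), X \rangle_{\Euc}$. Combining these two identifications yields
\begin{displaymath}
  \langle \bm{A}^{-1}(x)\,\nabla_{\!\bm{A}}f(x) - \nabla f(x), X \rangle_{\Euc} = 0
  \qquad \text{for every vector field } X.
\end{displaymath}

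I would then conclude by choosing $X$ to range over the canonical coordinate frame (or, equivalently, invoking non-degeneracy of $\langle \cdot, \cdot \rangle_{\Euc}$) to deduce the pointwise equality $\bm{A}^{-1}(x)\,\nabla_{\!\bm{A}}f(x) = \nabla f(x)$. Multiplying both sides on the left by $\bm{A}(x)$, which is permitted because assumption \eqref{eq:positive-definiteness} guarantees that $\bm{A}(x)$ is invertible for every $x \in \R^d$, produces \eqref{eq:def-gradient-in-singular-case}.

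There is no real obstacle here: the only substantive point is that invertibility of $\bm{A}$ is needed to pass from $\bm{A}^{-1}\nabla_{\!\bm{A}}f = \nabla f$ to $\nabla_{\!\bm{A}}f = \bm{A}\nabla f$, and this is exactly why the statement is placed in the elliptic (positive-definite) subsection. The same computation also motivates the definition of $\nabla_{\!\bm{A}}f$ in the degenerate case by the explicit formula \eqref{eq:def-gradient-in-singular-case}, since then the defining identity \eqref{eq:7} itself may fail to uniquely determine $\nabla_{\!\bm{A}}f$.
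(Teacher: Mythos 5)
Your argument is exactly the paper's: unwind \eqref{eq:7} using \eqref{eq:action-local} on the right and \eqref{eq:9} on the left, use non-degeneracy of the Euclidean inner product to obtain $\bm{A}^{-1}\nabla_{\!\bm{A}}f=\nabla f$, and multiply by $\bm{A}$, which is invertible by \eqref{eq:positive-definiteness}. Your closing observation that the formula is then taken as the definition in the degenerate case also matches the paper's subsequent discussion, so there is nothing to add.
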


Now, by the definition \eqref{eq:9} of the inner products
$\langle\cdot, \cdot\rangle_{\bm{A}}$, one has that
\begin{align*}
\abs{\nabla_{\!\!\bm{A}}f}^2_{\bm{A}}
&=\langle  \nabla_{\!\bm{A}}f,\nabla_{\!\bm{A}}f\rangle_{\bm{A}}\\
&=\langle \bm{A}^{-1}(\bm{A}\nabla f),
\bm{A}\nabla f\rangle_{\Euc}\\
&=\langle \nabla f,
\bm{A}\nabla f\rangle_{\Euc}\\ 
&=\langle \bm{B}\nabla f,
\bm{B}\nabla f\rangle_{\Euc}\\
&= \abs{\bm{B}\nabla f}^{2}_{\Euc}
\end{align*}
for every $f\in C^{1}$. This shows, we have the following
characterization of the length of the gradient $\nabla_{\!\!\bm{A}}f$.

\begin{prop}
  For a given Riemannian structure $(\R^{d},\bm{A})$ and $f\in C^{1}$,
  one has that the \emph{length} (associated with $\bm{A}$)
  $\abs{\nabla_{\!\bm{A}}f}_{\bm{A}}$ of the gradient
  $\nabla_{\!\bm{A}}f$ is equivalent to
\begin{equation}
    \label{eq:12bis}
    \abs{\nabla_{\!\bm{A}}f}_{\bm{A}}
    =\sqrt{\langle \nabla f,
\bm{A}\nabla f\rangle_{\Euc}}
\end{equation}
Moreover, in local coordinates 
\begin{equation}
    \label{eq:12}
    \abs{\nabla_{\!\bm{A}}f}_{\bm{A}}
    =\abs{\bm{B}\nabla f}_{\Euc}=\sqrt{\langle \bm{B}\nabla f,
      \bm{B}\nabla f\rangle_{\Euc}}.
\end{equation}
\end{prop}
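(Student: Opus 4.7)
The plan is to unfold the definitions in the obvious order: first the definition of the norm $\abs{\cdot}_{\bm{A}}$, then the definition \eqref{eq:9} of the inner product $\langle\cdot,\cdot\rangle_{\bm{A}(x)}$, then the local-coordinate expression \eqref{eq:def-gradient-in-singular-case} for $\nabla_{\!\bm{A}}f$, and finally the identity $\bm{A}=\bm{B}^{2}$ from Notation \ref{not:square-root}.

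More concretely, I would start from
\begin{displaymath}
 \abs{\nabla_{\!\bm{A}}f}_{\bm{A}}^{2}
 =\langle \nabla_{\!\bm{A}}f,\nabla_{\!\bm{A}}f\rangle_{\bm{A}},
\end{displaymath}
apply \eqref{eq:9} to rewrite this as $\langle \bm{A}^{-1}\nabla_{\!\bm{A}}f,\nabla_{\!\bm{A}}f\rangle_{\Euc}$, and then substitute $\nabla_{\!\bm{A}}f=\bm{A}\nabla f$ from \eqref{eq:def-gradient-in-singular-case} into the \emph{second} slot. The matrix $\bm{A}^{-1}$ acting on the first factor then cancels with the $\bm{A}$ coming out of $\nabla_{\!\bm{A}}f$ there, producing the coordinate-free looking expression $\langle \nabla f,\bm{A}\nabla f\rangle_{\Euc}$, which is \eqref{eq:12bis}. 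For \eqref{eq:12}, I would insert $\bm{A}=\bm{B}^{2}$ and use the fact that $\bm{B}$ is symmetric (being the positive square root of the symmetric $\bm{A}$) to move one copy of $\bm{B}$ across the Euclidean inner product, giving $\langle \bm{B}\nabla f,\bm{B}\nabla f\rangle_{\Euc}=\abs{\bm{B}\nabla f}_{\Euc}^{2}$.

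Taking square roots yields both claimed identities. Since the positive-definiteness assumption \eqref{eq:positive-definiteness} was used only to make sense of $\bm{A}^{-1}$ in the intermediate step, it is worth noting that the final expressions in \eqref{eq:12bis} and \eqref{eq:12} no longer involve $\bm{A}^{-1}$; this is precisely the point that will let one extend the definition of $\abs{\nabla_{\!\bm{A}}f}_{\bm{A}}$ to the degenerate setting later in the paper, so I would mention this observation explicitly at the end.

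There is no real obstacle here: the computation displayed just before the proposition statement already carries it out, and the proof essentially consists of recording that chain of equalities as the verification of \eqref{eq:12bis} and \eqref{eq:12}. The only minor care point is to justify moving $\bm{B}$ across $\langle\cdot,\cdot\rangle_{\Euc}$, which follows immediately from the symmetry of $\bm{B}$ guaranteed by Notation \ref{not:square-root}.
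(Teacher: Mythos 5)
Your proposal is correct and follows exactly the chain of equalities the paper itself records in the displayed computation immediately preceding the proposition: unfold $\abs{\cdot}_{\bm{A}}$ via \eqref{eq:9}, substitute $\nabla_{\!\bm{A}}f=\bm{A}\nabla f$, cancel $\bm{A}^{-1}$ against $\bm{A}$ to get \eqref{eq:12bis}, and then use $\bm{A}=\bm{B}^{2}$ together with the symmetry of $\bm{B}$ to get \eqref{eq:12}. Your closing observation that the final expressions are free of $\bm{A}^{-1}$, which is what permits the extension to the degenerate setting, matches the remark the paper makes right after the proposition.
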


For convenience, we introduce the following notation.

\begin{notation}
 For a given Riemannian structure $(\R^{d},\bm{A})$ and $f\in C^{1}$, we set
 \begin{displaymath}
   \nabla_{\!\bm{B}}f=\bm{B}\nabla f,
 \end{displaymath}
 where the matrix $\bm{B}$ is the square root of $\bm{A}$. 
\end{notation}

The characterization \eqref{eq:12} of the length
$\abs{\nabla_{\!\bm{A}}f}_{\bm{A}}$ in local coordinates in terms of the
square root $\bm{B}$ of $\bm{A}$ has the advantage to relax the
requirement $\bm{A}$ being invertible.\medskip

With this preliminaries, we can now turn to the situation when the
matrix $\bm$ is degenerate.

\subsubsection{The degenerate setting}

From here and the rest of this section, let $(\R^{d},\bm{A})$ be a
Riemannian structure equipped with the following class of degenerate
matrix functions $\bm{A}$. Next for a connected subset
$K\subseteq \R^{d}$ we set up the distance $d_K(x)$ from $K$ for all
$x\in \R^{d}$ by the formula
\begin{equation}\label{dk}
d_K(x)=\inf_{y\in K}|x-y|.
\end{equation}

\begin{assumption}
\label{ass:1}
Suppose, $\bm{A}=(a_{ij})_{i,j=1}^{d}$ is a symmetric, positive
semi-definite matrix with coefficients $a_{ij}\in L^{\infty}_{loc}(\R^{d})$, 
$1 \le i,j \le n$, which we allow to
\emph{degenerate} in the following sense.\medskip 

\indent \emph{For integers $0\le n\le d$, let $K_1$, $\dots$, $K_n$ be
  connected subsets of $\R^d$ and of Lebesgue measure $\mathcal{L}^{d}$
  zero. Moreover, if any $K_i$ is not a one-point set, then we assume
  that $K_i$ is the graph of at least a $C^{1}$ function. Further,
  suppose $\bm{A}$ satisfies
\begin{equation}
 \label{eq:1}
 \xi^T\bm{A}(x)\xi=\sum_{i,j} a_{ij}(x)\xi_i \xi_j\ge \omega(x) |\xi|^2
\end{equation}
for a.e. $x\in \R^{d}$ and every $\xi\in \R^{d}$, where 
\begin{displaymath}
\omega (x) =\min\{d^{\gamma_1}_{K_1}(x), \dots, d^{\gamma_n}_{K_n}(x),1\}    
\end{displaymath}
for some  $\gamma_1, \dots, \gamma_n>0$.}
\end{assumption}

It is worth noting that under Assumption \ref{ass:1}, the matrix
$\bm{A}(x)$ has not to be invertible for $x\in K_i$. Hence, the inner
product $\langle\cdot, \cdot\rangle_{\bm{A}}$ associated with $\bm{A}$
might become \emph{singular} in $K_i$. On the other hand, the associated
gradient $\nabla_{\! \bm{A}}$ defined
by~\eqref{eq:def-gradient-in-singular-case} remains valid. Moreover,
there is still a unique positive semi-definite matrix $\bm{B}$
associated with $\bm{A}$ via~\eqref{eq:11}. Hence, \eqref{eq:12} can
still be used to define the length $\abs{\nabla_{\!\bm{A}}f}_{\bm{A}}$
of the gradient $\nabla_{\!\bm{A}}f$.\medskip


\subsection{Some examples of degenerate Riemannian structures}
\label{subsec:examples}

We conclude this preliminary section by illustrating the above setting
with some concrete examples.

\pagebreak

\begin{exmp}\label{ex:1}
\mbox{}
 \begin{enumerate}[label={\roman*.)}]
%
%
%
     \item \label{ex:1-classical-grushin} (\emph{Classical Gru\v{s}in space}) 
     Originally, Gru\v{s}in \cite{Gru} 
      introduced  a class of linear degenerate operators 
      \begin{displaymath}
        G_k=\partial_x^2 + |x|^{2k}\partial_y^2   
      \end{displaymath}
      defined for any $k \in \N$ and $(x,y)\in \R^{2}$.  This definition
      can easily be extended to the following larger class of operators
      with similar properties. For integers $d$, $m$, $n\ge 0$, let
      $d=m+n\ge 1$, $\beta_1$, \dots, $\beta_m\ge 0$, and $\bm{A}$ be
      the $d\times d$-diagonal matrix given by
     \begin{displaymath}
       \bm{A}=\begin{bmatrix} 
       \bm{I}_{n\times n} & 0 \\
       0 & \begin{bmatrix}
       |x|^{\beta_{1}}_{n} & 0 & \cdots & 0\\
       0& |x|^{\beta_{2}}_{n} & \ddots &  0\\
       \vdots & \ddots & \ddots & 0\\
       0 & \cdots &   0 & |x|^{\beta_{m}}_{n}
       \end{bmatrix}_{m\times m}
       \end{bmatrix},
     \end{displaymath}
     where $\bm{I}_{n\times n}$ refers to the \emph{identiy} matrix in
     $\R^{n\times n}$, and we set
      \begin{equation}
          \label{eq:n-norm}
          |x|_{n}:=\left(\sum_{i=1}^{n}x_{i}^{2}\right)^{1/2},\;
          x=(x_{1},\dots,x_{n},0,\dots,0)\in \R^{n+m}
      \end{equation}
      the Euclidean norm of the \emph{first $n$-coordinates}.\medskip
     
     For this matrix $\bm{A}$, the gradient $\nabla_{\!\bm{A}}f$ is given by
     \begin{displaymath}
        \nabla_{\!\bm{A}}f= 
        \begin{bmatrix} 
       \frac{\partial f}{\partial x_1},\dots,\frac{\partial f}{\partial x_n},
       |x|^{\beta_{1}}_{n}\frac{\partial f}{\partial y_1},
       \dots,  |x|^{\beta_{m}}_{n}\frac{\partial f}{\partial y_m}
       \end{bmatrix},
     \end{displaymath}
     and the length $\abs{\nabla_{\!\bm{A}}f}_{\bm{A}}$ in local coordinates is given by
     \begin{displaymath}
        \abs{\nabla_{\!\bm{A}}f}_{\bm{A}}^2=\sum_{i=1}^n\labs{\tfrac{\partial f}{\partial x_i}}^2
        +\sum_{j=1}^{m}\abs{x}_n^{\beta_{j}}\labs{\tfrac{\partial f}{\partial y_i}}^2.
     \end{displaymath}
     
     In sepcial case , for every $j=1,\dots, m$, let $k_{j}\in \N$,
     $\beta_{j} =2 k_j$, and $p_{j}(x)=\abs{x}^{\beta_{j}}$. Then, one
     can introduced as system of corresponding vector fields
     $X=\{X_{1},\dots, X_{d}\}$ by the formula
     \begin{displaymath}
       X_{j}=
       \begin{cases}
        \frac{\partial}{\partial x_{j}} & \text{for $j=1,\dots, n$,}\\
        p_{j-n}\frac{\partial}{\partial x_{j}} & \text{for $j=n+1,\dots, n+m=d$,}\\
       \end{cases}
     \end{displaymath}
     The system $X$ satisfies the \emph{H\"ormander condition}; that is,
     if $\Lie(X)$ denotes the Lie algebra generated by $X$, then one has
     that
     \begin{displaymath}
       \Lie_{x}(X):=\Big\{\tilde{X}(x)\,
       \vert\,\tilde{X}\in \Lie(X)\Big\}=\mathcal{T}_{q}(\mathds{G}^{n,m})\
     \end{displaymath}
     for every $x\in \R^{d}$. Here,
     $\mathcal{T}_q(\mathds{G}^{n,m})=:g_{n,m}$ denotes the tangent
     space of the \emph{Gru\v{s}in space} $\mathds{G}^{n,m}$ (or, also
     written as $\R^{n,m}$). Note that, in the consider setting,
     $\Lie(X)$ is finite dimensional nilpotent Lie algebra.\medskip

     An example of such system corresponding to the martix $A$ with
     $\alpha=0$ and $\beta_j=2N$ for some $N\in \N$ was studied by
     Robinson and the second author in \cite{Rsik2}. See also \cite{DS}
     for some other possible setting involving Lie algebra techniques.

   \item \label{ex:gen-grusin} (\emph{Generalized Gru\v{s}in space})
     Now, for integers $d$, $m$, $n\ge 0$, and for $0\le \alpha<2$, let
     $d=n+m\ge 1$, $\beta_1$, \dots, $\beta_m\ge 0$ and $\bm{A}$ be the
     $d\times d$-diagonal matrix given by
\begin{equation}
  \label{eq:genGrushin-Matrix}
       \bm{A}=\begin{bmatrix} 
       |x|^{\alpha}_n\,\bm{I}_{n\times n} & 0 \\
       0 & \begin{bmatrix}
       |x|^{\beta_{1}}_{n} & 0 & \cdots & 0\\
       0& |x|^{\beta_{2}}_{n} & \ddots &  0\\
       \vdots & \ddots & \ddots & 0\\
       0 & \cdots &   0 & |x|^{\beta_{m}}_{n}
       \end{bmatrix}_{m\times m}
       \end{bmatrix},
\end{equation}
     
where as before $|x|_{n}$ denotes the Euclidean norm \eqref{eq:n-norm}
for the first $n$-coordinates
$x=(x_{1},\dots,x_{n},0,\dots,0)\in \R^{d}$.\medskip
    
For this matrix $\bm{A}$, the gradient $\nabla_{\!\bm{A}}f$ is given by
     \begin{displaymath}
        \nabla_{\!\bm{A}}f= 
        \begin{bmatrix} 
       |x|^{\alpha}_{n}\frac{\partial f}{\partial x_1},
       \dots,|x|^{\alpha}_{n}\frac{\partial f}{\partial x_n},
       |x|^{\beta_{1}}_{n}\frac{\partial f}{\partial y_1},
       \dots,  |x|^{\beta_{m}}_{n}\frac{\partial f}{\partial y_m}
       \end{bmatrix},
     \end{displaymath}
     and the length $\abs{\nabla_{\!\bm{A}}f}_{\bm{A}}$ in local
     coordinates is given by
     \begin{equation}
        \label{eq:Grushin-gradient-norm}
        \abs{\nabla_{\!\bm{A}}f}_{\bm{A}}^2=|x|^{\alpha}_{n}
        \sum_{i=1}^n\labs{\tfrac{\partial f}{\partial x_i}}^2
        +\sum_{j=1}^{m}\abs{x}_n^{\beta_{j}}
        \labs{\tfrac{\partial f}{\partial y_i}}^2.
     \end{equation}
     Note, the case $\alpha =0$ is considered in the classical
     Gru\v{s}in type spaces mentioned above. In Section
     \ref{sec:Separation}, we consider the case $d=1+m$ for
     $0\le \alpha<2$ and $\beta_1$, \dots, $\beta_m>0$. In this case,
     the structure $\bm{A}$ from \eqref{eq:genGrushin-Matrix} reduces to
    \begin{displaymath}
  %
       \bm{A}=\begin{bmatrix} 
       |x_1|^{\alpha}\, & 0 \\
       0 & \begin{bmatrix}
       |x_1|^{\beta_{1}} & 0 & \cdots & 0\\
       0& |x_1|^{\beta_{2}} & \ddots &  0\\
       \vdots & \ddots & \ddots & 0\\
       0 & \cdots &   0 & |x_1|^{\beta_{m}}
       \end{bmatrix}_{m\times m}
       \end{bmatrix},
     \end{displaymath}
     and the length $\abs{\nabla_{\!\bm{A}}f}_{\bm{A}}$ in local
     coordinates is given by
   \begin{displaymath}
     \abs{\nabla_{\!\bm{A}}f}_{\bm{A}}^2
     =|x_1|^{\alpha}\labs{\tfrac{\partial f}{\partial x_1}}^2+
     \sum_{j=1}^{m}|x_1|^{\beta_j}\labs{\tfrac{\partial f}{\partial y_j}}^2.
     \end{displaymath}
     This structure $(\R^d,\bm{A})$ leads to a separation phenomenon for
     $2\le p<\infty$, $\frac{2}{p^{\mbox{}_{\prime}}} \le \alpha<2$.
    
     We emphasize that the case $\alpha\ge 2$ is geometrically not
     relevant because of geometric reasons. Indeed, consider a curve
     $\gamma \colon [a,b] \to \R$ connecting points $a,b\in \R$ defined
     by $\gamma(t)=t$. Then, obviously, $\gamma'(t)=1$ and according to
     \eqref{eq:10},
    \begin{displaymath}
        |\gamma'(x)|_{\bm{A}(x)}=\langle 1, 1 
        \rangle_{\bm{A}(x)}^{1/2}=|x|^{-\alpha/2}       
    \end{displaymath}
    for every $x\in [a,b]$. Hence, the distance
    \begin{equation}\label{dis}
        d_A(a,b)= \int_a^b|\gamma'(x)|_{\bm{A}(x)}\,dx=\int_a^b |x|^{-\alpha/2}dx  
     \end{equation}
     is finite for every $a,b \in \R$ if $0<\alpha<2$, and
     $d_A(0, a) = \infty$ for any $a\in \R$ if $\alpha \ge 2$.\medskip

   \item (\emph{Further generalization}) Let $d\ge 1$ and $K$ be a
     $(d-1)$-dimensional hyperplane in $\R^{d}$.  If $d_K$ is the
     distance defined in \eqref{dk} then, for given exponents
     $\alpha_{1}, \dots, \alpha_{d}\ge 0$, let $\bm{A}$ be the diagonal
     matrix defined by
     \begin{displaymath}
       \bm{A}= \begin{bmatrix}
       d_{K}^{\alpha_{1}}& 0 & 0 & 0 & \cdots & 0\\
       0& d_{K}^{\alpha_{2}} & 0 & 0 & \cdots & 0\\
       \vdots & 0 & d_{K}^{\alpha_{3}} & 0 & \cdots & 0\\
       0 & \cdots & \ddots & \ddots & \ddots & 0\\
       0 & \cdots & \ddots & \ddots & 0 & d_{K}^{\alpha_{d}}
       \end{bmatrix}.
     \end{displaymath}
     One easily verifies that the matrix $\bm{A}$ satisfies
     Assumption~\ref{ass:1} with exponent
     $\gamma=\min\{\alpha_{1}, \dots, \alpha_{d}\}$.

  \item  (\emph{Heisenberg Group}) Let $d=3$ and $\bm{A}$ be the matrix given by 
  \begin{displaymath}
       \bm{A}(x,y,z)= 
        \begin{bmatrix}
            1 & 0 & y/2\\
            0 & 1 & -x/2\\
            y/2 & -x/2 & (x^2+y^2)/2
        \end{bmatrix}.
    \end{displaymath}
    For this matrix, the gradient $\nabla_{\bm{A}}f$ is the, in fact, the \emph{horizontal gradient} associated with the \emph{Heisenberg group} (cf., for instance, \cite{MR2312336}). Note, the matrix $\bm{A}$ does not satisfy Assumption~\ref{ass:1} introduced in this paper. It is an interesting example of a sub-Riemannian structure $(\R^3,\bm{A})$, but goes beyond the discussion of this paper. See for example the monograph~\cite{MR3444525} by Ricciotti concerning elliptic regularity theory of the $p$-Laplace equation in the Heisenberg group, and we refer to the recent regularity result \cite{MR4611379} by Capogna, Citti, and Zhong for the parabolic $p$-Laplace equation in the Heisenberg group setting.

 \item \label{ex:monomial-weight} (\emph{Spaces with monomial weights}) Now, for integers $d\ge 1$, and for $0\le \alpha_1,\dots, \alpha_d <1$, let
    $\bm{A}$ be the $d\times d$-diagonal matrix given by
\begin{equation}
 \label{eq:monomilal}
      \bm{A}=\begin{bmatrix}
      |x_1|^{\alpha_{1}} & 0 & \cdots & 0\\
      0& |x_2|^{\alpha_{2}} & \ddots &  0\\
      \vdots & \ddots & \ddots & 0\\
      0 & \cdots &   0 & |x_d|^{\alpha_{d}}
      \end{bmatrix},
\end{equation}
    where as before $|\cdot|$ denotes the absolute value. It is easy
    to see that
    \begin{align*}
      \xi^T\bm{A}(x)\xi&=\sum_{i=1}^d |x_i|^{\alpha_{i}}
                         \abs{\xi_i}^2\\
      &\ge \min\{|x_1|^{\alpha_{1}},\dots, |x_d|^{\alpha_{d}},1\}\, |\xi|^2
    \end{align*}
    for every $\xi\in \R^d$. Thus, the matrix $\bm{A}$ satisfies Assumption~\ref{ass:1}, and the
    associated gradient $\nabla_{\!\bm{A}}f$ is given by
    \begin{displaymath}
       \nabla_{\!\bm{A}}f= 
       \begin{bmatrix} 
      |x_1|^{\alpha_1}\frac{\partial f}{\partial
  x_1},\dots, |x_d|^{\alpha_{d}}\frac{\partial f}{\partial x_d}
      \end{bmatrix}.
    \end{displaymath}
 This example of $(\R^d,\bm{A})$ was, for instance, mentioned in~\cite{MR3097258} by Cabr\'e and Ros-Oton and in connection with other Sobolev inequalities with monomial weights. It is worth mentioning that as the degenerate Riemannian structure $(\R^d,\bm{A})$ discussed in Example ii.), also $(\R^d,\bm{A})$ given by \eqref{eq:monomilal} leads to a \emph{separation phenomena} at $\{x_i=0\}$ for every $i=1, \dots, d$ provided $1\le \alpha_i<2$. For further details to this topic, we refer to Section~\ref{sec:Separation}.


   \item (\emph{Poincar\'e half-plane}) Let $d=2$ and $\bm{A}$ be the diagonal matrix
   \begin{displaymath}
     	\bm{A}=
        \begin{bmatrix} x^2 & 0\\
     	    0 & x^2
     	\end{bmatrix}.
     	\end{displaymath}
     	Then $\bm{A}$ is symmetric and satisfies hypothesis~\eqref{eq:1} with equality; namely, one has that
     	\begin{displaymath}
     		\xi^{t}\bm{A}\xi= x^{2}\xi_{1}^2+x^2\xi_{2}^{2}=x^2\abs{\xi}^2.
     	\end{displaymath}
     	Moreover, the associated gradient
     	\begin{displaymath}
     		\nabla_{\!\! \bm{A}}f=\begin{bmatrix} x^2 & 0\\
     		0 & x^2
     	\end{bmatrix}
     	\begin{bmatrix} f_{x}\\
     		f_{y}
     	\end{bmatrix}=
        \begin{bmatrix} 
            x^2\,f_{x} & x^2\,f_{y}
     	\end{bmatrix}.
     	\end{displaymath}
     		
     This example shows well that the matrix $\bm{A}$ decides on the length of the geodesics. In fact, the metric $\bm{g}$ induced by the matrix $\bm{A}$ (cf~\eqref{eq:def-of-g}) coincides with the metric of two copies of the \emph{Poincar\'e half-plane}.
     	
     
 \end{enumerate} 
\end{exmp}


\section{Lebesgue and mixed Sobolev spaces}
\label{sec:spaces}

Throughout this section, let $(\R^{d},\bm{A})$ be a Riemannian structure
equip\-ped with matrix functions $\bm{A}$ satisfying the Assumption
\ref{ass:1}.\medskip 

The aim of this section is to outline the Lebesgue and
$1^{\textrm{st}}$-order Sobolev spaces, which are used later on.
 
 \begin{notation}
    For $1\le q\le \infty$, and a given open subset $\Omega$ of $\R^d$, we denote by $L^{q}(\Omega)$ the standard Lebesgue
    space with respect to the Lebesgue measure restricted on an open subset
    $\Omega$ of $\R^{d}$, and by $\norm{f}_{q}$ the associated norm. Further, $W^{1,1}_{loc}(\Omega)$ refers to the space of all $f\in L^1_{loc}(\Omega)$ with distributional partial derivative $\frac{\partial f}{\partial x_i}\in L^1_{loc}(\Omega)$ for every $i=1, \dots, d$.
 \end{notation}

\begin{defn}
 Let $1\le p$, $q\le \infty$, and $\Omega$ an open subset of $\R^d$. Then, we denote by $W^{1,(q,p)}_{\!
 \bm{A}}(\Omega)$ the \emph{$1^{\textrm{st}}$-order mixed
  Sobolev space} of all
functions $f\in L^{q}(\Omega)\cap W^{1,1}_{loc}(\Omega)$ such that
$\abs{\nabla_{\! \bm{A}}f}_{\bm{A}}\in L^{p}(\Omega)$. We equip the space
$W^{1,(q,p)}_{\! \bm{A}}(\Omega)$ with the sum norm
\begin{displaymath}
  \norm{f}_{W^{1,(q,p)}_{\! \bm{A}}(\Omega)}=\norm{f}_{q}+\norm{\abs{\nabla_{\!
  \bm{A}}f}_{\bm{A}}}_{p}.
\end{displaymath}
Further, we denote by $W^{1,(q,p)}_{\! \bm{A},0}(\Omega)$ the closure of the set
of test-functions $C^{\infty}_{c}(\Omega)$ in $W^{1,(q,p)}_{\! \bm{A}}(\Omega)$.
\end{defn}

\begin{notation}
If $\bm{A}=\mathbb{I}_{d\times d}$ is the identity matrix in $\R^{d\times d}$, then we
simply write $W^{1,(q,p)}(\Omega)$ instead of
$W^{1,(q,p)}_{\! \mathbb{I}_{d\times d}}(\Omega)$. Further, if $q=p$ then we write
$W^{1,q}_{\bm{A}}(\Omega)$ instead of $W^{1,(q,q)}(\Omega)$. 
\end{notation}

\begin{prop}
 \label{prop:1}
 Let $1\le q\le \infty$ and $1\le p< \infty$. Then, the following statements
 hold.
 \begin{enumerate}[label={(\arabic*.)}]
     \item \label{prop:1-claim1} The $1^{\textrm{st}}$-order mixed
  Sobolev space $W^{1,(q,p)}_{\! \bm{A}}(\Omega)$ is a Banach space. In
  particular, $W^{1,2}_{\bm{A}}(\Omega)$ is a Hilbert space.
    \item \label{prop:1-claim2} For $1<q$, $p<\infty$, $W^{1,(q,p)}_{\!
    \bm{A}}(\Omega)$ is reflexive.
    \item \label{prop:1-claim3} For $1\le q$, $p<\infty$, $W^{1,(q,p)}_{\!
    \bm{A}}(\Omega)$ is separable.
 \end{enumerate}
\end{prop}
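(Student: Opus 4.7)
The plan is to realize $W^{1,(q,p)}_{\! \bm{A}}(\Omega)$ as a closed subspace of a product Banach space via a canonical isometric embedding, and then read off \ref{prop:1-claim1}--\ref{prop:1-claim3} from classical properties of $L^{q}$ and $L^{p}$. Concretely, equip the product $E := L^{q}(\Omega)\times L^{p}(\Omega;\R^{d})$ with the sum norm $\norm{(u,v)}_{E} := \norm{u}_{q}+\norm{v}_{p}$ and consider the map
\begin{displaymath}
  T : W^{1,(q,p)}_{\! \bm{A}}(\Omega) \to E, \qquad Tf := (f, \bm{B}\nabla f),
\end{displaymath}
where $\bm{B}$ is the positive semi-definite square root of $\bm{A}$ from Notation~\ref{not:square-root}. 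By~\eqref{eq:12} we have $\norm{Tf}_{E} = \norm{f}_{q}+\norm{\abs{\nabla_{\!\bm{A}}f}_{\bm{A}}}_{p}$, so $T$ is a linear isometry onto its range.

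Granting for the moment that the range of $T$ is closed in $E$, the three claims follow at once. For \ref{prop:1-claim1}: $E$ is Banach as a product of Banach spaces, hence so is every closed subspace, hence so is $W^{1,(q,p)}_{\! \bm{A}}(\Omega)$; in the special case $q=p=2$ the sum norm on $E$ comes from the inner product $((u_{1},v_{1}),(u_{2},v_{2}))\mapsto \int_{\Omega}u_{1}\overline{u_{2}}\,\mathrm{d}x + \int_{\Omega}\langle v_{1},v_{2}\rangle_{\Euc}\,\mathrm{d}x$, whence $W^{1,2}_{\bm{A}}(\Omega)$ is Hilbert. For \ref{prop:1-claim2}: for $1<q,p<\infty$ both $L^{q}(\Omega)$ and $L^{p}(\Omega;\R^{d})$ are reflexive, so $E$ is reflexive and closed subspaces inherit reflexivity. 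For \ref{prop:1-claim3}: for $1\le q,p<\infty$ both factors are separable, hence $E$ is separable, and any subset of a separable metric space is separable.

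The substantive work is therefore the closedness of $T(W^{1,(q,p)}_{\! \bm{A}}(\Omega))$ in $E$. Suppose $(f_{k})$ is a sequence with $Tf_{k}\to (f,h)$ in $E$, i.e.\ $f_{k}\to f$ in $L^{q}(\Omega)$ and $\bm{B}\nabla f_{k}\to h$ in $L^{p}(\Omega;\R^{d})$. Set $K := \bigcup_{i=1}^{n}K_{i}$, which has Lebesgue measure zero by Assumption~\ref{ass:1}. On any compact set $C\Subset \Omega\setminus K$ the weight $\omega$ of~\eqref{eq:1} is bounded below by a positive constant, so $\bm{B}(x)$ is uniformly invertible on $C$ with a bounded inverse; consequently $\nabla f_{k} = \bm{B}^{-1}(\bm{B}\nabla f_{k})$ is Cauchy in $L^{1}(C;\R^{d})$ with limit $\bm{B}^{-1}h$. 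Coupling this with $f_{k}\to f$ in $\mathcal{D}'(\Omega)$, the distributional derivative $\nabla f$ is identified with the $L^{1}_{\loc}(\Omega\setminus K)$-function $\bm{B}^{-1}h$.

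The main obstacle I foresee is the final step of promoting this identification from $\Omega\setminus K$ to all of $\Omega$, that is, of verifying $f\in W^{1,1}_{\loc}(\Omega)$ by ruling out a singular part of $\nabla f$ concentrated on $K$. Here I would exploit the structural hypothesis in Assumption~\ref{ass:1} that each $K_{i}$ is either a point or the graph of a $C^{1}$-function, hence has Hausdorff dimension at most $d-1$ and Lebesgue measure zero. A standard removable-singularities argument for distributional derivatives on such a thin set $K$, combined with $f_{k}\to f$ in $L^{1}_{\loc}(\Omega)$, rules out any distributional mass of $\nabla f$ concentrated on $K$ and exhibits $\nabla f$ globally as an $L^{1}_{\loc}(\Omega)$-function. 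A final application of~\eqref{eq:12} then yields $\bm{B}\nabla f = h$ on $\Omega\setminus K$ and thus a.e.\ on $\Omega$, giving $Tf = (f,h)$ and closing the argument.
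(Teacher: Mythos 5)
Your architecture coincides with the paper's: the isometric embedding $Tf=(f,\bm{B}\nabla f)$ into $E=L^q(\Omega)\times L^p(\Omega;\R^d)$ is exactly what the paper uses for reflexivity and separability, and your closedness argument for the range of $T$ --- localize to compact subsets of $\Omega\setminus\bigcup_iK_i$ where $\omega$ is bounded below, invert $\bm{B}$ there, and identify $\nabla f=\bm{B}^{-1}h$ on $\Omega\setminus\bigcup_iK_i$ --- is a repackaging of the paper's exhaustion by the sets $\Omega_m$ in \eqref{eq:set-Km}. Everything up to your last step is correct, and the reduction of \ref{prop:1-claim1}--\ref{prop:1-claim3} to closedness of $T$'s range is clean.

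The gap is the final step, and it cannot be repaired as proposed. You want to upgrade $f\in W^{1,1}_{\loc}(\Omega\setminus K)$ to $f\in W^{1,1}_{\loc}(\Omega)$ by a removable-singularities argument, on the grounds that $K=\bigcup_iK_i$ is Lebesgue-null of Hausdorff dimension at most $d-1$. But removability of a closed null set for $W^{1,1}$ requires $\mathcal{H}^{d-1}(K)=0$, whereas under Assumption~\ref{ass:1} the $K_i$ are points or $C^1$ graphs and hence carry positive $(d-1)$-dimensional Hausdorff measure; a jump across such a set produces a genuine singular part of the distributional gradient that the $\bm{A}$-gradient does not see. The paper's own separation phenomenon supplies the counterexample: for $d=1$, $\bm{A}=[\,\abs{x}^{\alpha}]$ with $\alpha\ge 2/p'$, $1\le q<\infty$ and $\Omega=(a,b)\ni 0$ bounded, the truncators $\chi_n$ from the proof of Lemma~\ref{lem:5} lie in $W^{1,(q,p)}_{\!\bm{A}}(\Omega)$, converge to $\mathds{1}_{(0,b)}$ in $L^q(\Omega)$, and satisfy
\begin{displaymath}
  \norm{\abs{\nabla_{\!\bm{A}}\chi_n}_{\bm{A}}}_p^p
  =\frac{1}{(\log n)^p}\int_{1/n}^{1}x^{p(\frac{\alpha}{2}-1)}\,\dx\longrightarrow 0,
\end{displaymath}
so $(\chi_n)$ is Cauchy in $W^{1,(q,p)}_{\!\bm{A}}(\Omega)$, yet its only possible limit $\mathds{1}_{(0,b)}$ has distributional derivative $\delta_0$ and is \emph{not} in $W^{1,1}_{\loc}(\Omega)$. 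So no argument can establish the global $W^{1,1}_{\loc}(\Omega)$ membership of the limit in general. To be fair, the paper's proof does not address this point either: it only verifies $f\in W^{1,(q,p)}(\Omega_m)$ for every $m$, i.e.\ $f\in W^{1,1}_{\loc}(\Omega\setminus\bigcup_iK_i)$, before asserting $f\in W^{1,(q,p)}_{\!\bm{A}}(\Omega)$. Both proofs close only if the membership condition $f\in W^{1,1}_{\loc}(\Omega)$ in the definition of $W^{1,(q,p)}_{\!\bm{A}}(\Omega)$ is read as $f\in W^{1,1}_{\loc}(\Omega\setminus\bigcup_iK_i)$, which is how the space is in fact used in Section~\ref{sec:Separation}; you should adopt that reading and delete the removability step rather than try to prove it.
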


\begin{proof}
  We begin by showing that \ref{prop:1-claim1} holds. Let
  $(f_{n})_{n\ge 1}$ be a Cauchy sequence in
  $W^{1,(q,p)}_{\! \bm{A}}(\Omega)$. Since the space
  $W^{1,(q,p)}_{\!  \bm{A}}(\Omega)$ is continuously embedded into
  $L^q(\Omega)$, and since $L^{q}(\Omega)$ is complete, there is a
  function $f\in L^q(\Omega)$ such that $f_{n}\to f$ in
  $L^q(\Omega)$. Now, for every
  $i=1,\,\dots,\,n$, and $m\ge 1$, set
  \begin{equation}
    \label{eq:set-Km}
    \Omega_{m}=\Big\{x\in \Omega\,\Big\vert\, \min_{i=1,\dots,
      n}d_{K_i}(x)\ge 2^{-m},\;\abs{x}<2^{m}\Big\}.
  \end{equation}

  Let $\varepsilon>0$. Since by assumption, $(f_{n})_{n\ge 1}$ is a
  Cauchy sequence in $W^{1,(q,p)}_{\! \bm{A}}(\Omega)$, there is an
  $N=N(\varepsilon)\ge 1$ such that
  \begin{displaymath}
    \norm{\abs{\nabla_{\!\bm{A}}f_{n_{1}}-\nabla_{\!\bm{A}}f_{n_{2}}}_{\bm{A}}}_{p
    }<\varepsilon
  \end{displaymath}
  for every integer $n_{1}$, $n_{2}\ge N$ and so, by 
  \eqref{eq:12bis} and~\eqref{eq:1}, one see that
 \begin{equation}
    \label{eq:14}
    \tfrac{1}{2^{\frac{m}{2}p}}
    \int_{\Omega_{m}}\abs{\nabla(
    f_{n_{1}}- f_{n_{2}})}^{p}_{\Euc}\,
    \dx\le \int_{\Omega_{m}}\abs{\bm{B}\nabla( f_{n_{1}}-f_{n_{2}})}^{p}_{\Euc}\,\dx<\varepsilon^{p}
 \end{equation}
 for every integer $n_{1}$, $n_{2}\ge N$ and every $m\ge 1$. Since
 $W^{1,(q ,p)}(\Omega_{m})$ is complete, $f_{n}\to f$ in
 $L^{q}(\Omega)$, and since $\bm{B}$ belongs to
 $L^{\infty}(\Omega_{m};\R^{d\times d})$, we can conclude that
 $f\in W^{1,(q ,p)}(\Omega_{m})$, and
 $\bm{B}\nabla f_{n}\to\bm{B}\nabla f$ in
 $L^{p}(\Omega_{m};\R^{d})$. Therefore, sending
 $n_{1}\to \infty$ in
 \begin{displaymath}
   \int_{\Omega_{m}}\abs{\bm{B}\nabla( f_{n_{1}}-f_{n_{2}})}^{p}_{\Euc}\,\dx<\varepsilon^p
 \end{displaymath}
 yields that
 \begin{displaymath}
   \int_{\Omega_{m}}\abs{\bm{B}\nabla( f-f_{n_{2}})}^{p}_{\Euc}\,\dx\le\varepsilon^p
 \end{displaymath}
  for every $n_{2}\ge N$ and every $m\ge 1$. By construction, we have that
 \begin{displaymath}
   \Omega_{m}\subseteq \Omega_{m+1}
   \quad\text{ and }\quad
   \bigcup_{m\ge 1}\Omega_{m}=\Omega\setminus \bigcup_{i=1}^{n}K_i
 \end{displaymath}
 and $\bigcup_{i=1}^{n}K_i$ has Lebesgue measure $\mathcal{L}^d$ zero.
Thus, sending $m\to \infty$ in the last inequality lead to
 \begin{displaymath}
   \norm{\abs{\nabla (f-f_{n_{2}})}_{\bm{A}}}_{p}\le \varepsilon
 \end{displaymath}
 for every $n_{2}\ge N$, implying that
 $f\in W^{1,(q,p)}_{\! \bm{A}}(\Omega)$ and that
 $f_{n}\to f$ in $W^{1,(q,p)}_{\! \bm{A}}(\Omega)$. Since
 $(f_{n})_{n\ge 1}$ was an arbitrary Cauchy sequence, we have thereby
 shown that $W^{1,(q,p)}_{\! \bm{A}}(\Omega)$ is Banach space. In particular,
 $W^{1,2}_{\! \bm{A}}(\Omega)$ equipped with the inner product
 \begin{displaymath}
   \langle f,g \rangle_{W^{1,2}_{\! \bm{A}}(\Omega)}:=
   \int_{\Omega} f\,g \dx+\int_{\Omega}\langle 
   \nabla_{\!\bm{B}}f,\nabla_{\!\bm{B}}g \rangle_{\Euc}\dx
 \end{displaymath}
 for every $f$, $g\in W^{1,2}_{\! \bm{A}}(\Omega)$ is a Hilbert space.\medskip
 
 Next, we outline the proof of claim~\ref{prop:1-claim3} by following
 the standard argument (cf, \cite[Proposition~8.1]{MR2759829}). The
 product space $E:=L^{q}(\Omega)\times L^{p}(\Omega;\R^{d})$ is
 reflexive if $1<q$, $p<\infty$, and separable if $1<q$,
 $p<\infty$. Further, the mapping
 $T: W^{1,(q,p)}_{\! \bm{A}}(\Omega)\to E$ defined by
 $Tf:=(f,\nabla_{\!\bm{B}}f)$ is an isometry and by~\ref{prop:1-claim1},
 the image $T(W^{1,(q,p)}_{\! \bm{A}}(\Omega))$ is a closed subspace of
 $E$. Thus, the fact that the space $W^{1,(q,p)}_{\! \bm{A}}(\Omega)$ is
 reflexive and separable follows from the reflexivity and separability
 of $T(W^{1,(q,p)}_{\! \bm{A}}(\Omega))$.
\end{proof}

After introducing Sobolev spaces with respect to a Riemannian structure
$(\R^d,\bm{A})$, it be natural to mention valid Poincar\'e and Sobolev
inequalities in this setting. But, Assumption \ref{ass:1} is too general
for the validity of such inequalities.



\section{Degenerate p-Laplace operators}
\label{sec:plaplacian}

Let $(\R^{d},\bm{A})$ be a Riemannian structure equipped with matrix
functions $\bm{A}$ satisfying the Assumption \ref{ass:1}. Further, let
$\Omega$ be an open subset of $\R^{d}$ and $1<p<\infty$.\medskip

Then, the main object of this section is the 
so-called \emph{$p$-Laplace operator} 
\begin{equation}\label{eq:p-A-Laplacian}
   \Delta_{p}^{\!\bm{A}}f:=\nabla\cdot(\abs{\nabla_{\!\bm{A}}f
    }^{p-2}_{\bm{A}}\nabla_{\!\bm{A}}f)\qquad\text{in $\mathcal{D}'(\Omega)$}
 \end{equation}
associated with $(\R^d,\bm{A})$ for $f\in W^{1,(q,p)}_{\!\bm{A},loc}(\Omega)$, equipped with \emph{homogeneous Dirichlet boundary conditions} on $\partial\Omega$.\medskip

To realize the $2^{\textrm{nd}}$-order differential operator $\Delta_{p}^{\!\bm{A}}$, we consider the functional $\E : L^2(\Omega)\to [0,+\infty]$ given by
\begin{equation}
 \label{eq:3}
\E(f):=
 \begin{cases}
 \tfrac{1}{p}\displaystyle
 \int_{\Omega}\abs{\nabla_{\!\! \bm{A}}f}_{\bm{A}}^{p}\,\dx   
 &\text{if $f\in W^{1,(2,p)}_{\!\bm{A},0}(\Omega)$,}\\
 +\infty &\text{otherwise,}
 \end{cases}
\end{equation}
for every $f\in L^{2}(\Omega)$.\medskip

\begin{prop}
    \label{prop:2}
    Under the assumptions of this section, the functional $\E$ given
    by~\eqref{eq:3} is proper, densely defined, convex and lower
    semi-continuous on $L^{2}(\Omega)$.
\end{prop}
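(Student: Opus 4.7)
The plan is to treat the four properties in order, with the first three being essentially structural once the right observations are made, and concentrate effort on lower semi-continuity. Properness holds because any test function $\varphi\in C_c^{\infty}(\Omega)$ lies in $W^{1,(2,p)}_{\!\bm{A},0}(\Omega)$ with $\E(\varphi)<\infty$, using only that $\bm{B}\in L^{\infty}_{\loc}(\R^d;\R^{d\times d})$ by Assumption~\ref{ass:1} and that $\nabla\varphi$ has compact support. Dense definition is immediate from $C_c^{\infty}(\Omega)\subseteq W^{1,(2,p)}_{\!\bm{A},0}(\Omega)$ together with the density of $C_c^{\infty}(\Omega)$ in $L^2(\Omega)$. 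For convexity, the key observation is that \eqref{eq:12} expresses $\abs{\nabla_{\!\bm{A}}f}_{\bm{A}}=\abs{\bm{B}\nabla f}_{\Euc}$ as the Euclidean norm of the quantity $\bm{B}\nabla f$, which is linear in $f$; combining the triangle inequality for $\abs{\cdot}_{\Euc}$ with the convexity of $t\mapsto t^p$ on $[0,\infty)$ gives a pointwise convexity inequality whose integration yields convexity of $\E$.

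The main step is lower semi-continuity. Take $f_n\to f$ in $L^2(\Omega)$ and set $L:=\liminf_{n\to\infty}\E(f_n)$; we may assume $L<\infty$, since otherwise the claim is trivial. Passing to a subsequence along which $\E(f_n)\to L$, the sequence $(f_n)$ is bounded in the Banach space $W^{1,(2,p)}_{\!\bm{A},0}(\Omega)$. By Proposition~\ref{prop:1}~\ref{prop:1-claim2} the space $W^{1,(2,p)}_{\!\bm{A}}(\Omega)$ is reflexive (since $p>1$ and $q=2>1$), and as a closed subspace of a reflexive space, $W^{1,(2,p)}_{\!\bm{A},0}(\Omega)$ inherits reflexivity. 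A further subsequence therefore converges weakly in $W^{1,(2,p)}_{\!\bm{A},0}(\Omega)$ to some limit $g$; continuity of the embedding into $L^2(\Omega)$ propagates this weak convergence to $L^2$, and matching against the strong limit forces $g=f$. Consequently $f\in W^{1,(2,p)}_{\!\bm{A},0}(\Omega)$, and since $h\mapsto\bm{B}\nabla h$ is a continuous linear map into $L^p(\Omega;\R^d)$, weak convergence passes through it. Lower semi-continuity of the $L^p$-norm under weak convergence then gives $\E(f)\le L$, as required.

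The subtle point, and the one I expect to be the main obstacle, is the identification of the weak limit as an element of the \emph{closed} subspace $W^{1,(2,p)}_{\!\bm{A},0}(\Omega)$ rather than merely of $W^{1,(2,p)}_{\!\bm{A}}(\Omega)$; Dirichlet-type information is generally at risk of being lost in weak limits. Here it is preserved for free: $W^{1,(2,p)}_{\!\bm{A},0}(\Omega)$ is convex and norm-closed by construction, hence weakly closed by Mazur's lemma. If one wished to bypass the reflexivity argument --- for instance in anticipation of endpoint cases $p=1$ or $p=\infty$ --- an alternative would be to exhaust $\Omega\setminus\bigcup_{i=1}^{n}K_i$ by the open sets $\Omega_m$ from~\eqref{eq:set-Km}, on which \eqref{eq:1} yields $\bm{A}\ge c_m\bm{I}_{d\times d}$ for some $c_m>0$, apply weak lower semi-continuity locally on each $\Omega_m$ with the ordinary Euclidean gradient, and let $m\to\infty$ by monotone convergence, noting that $\bigcup_m\Omega_m$ has full Lebesgue measure in $\Omega$.
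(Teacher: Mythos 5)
Your proof is correct and follows essentially the same route as the paper: boundedness of an energy-bounded sequence in the reflexive space $W^{1,(2,p)}_{\!\bm{A},0}(\Omega)$, extraction of a weakly convergent subsequence identified with the $L^2$-limit, and weak lower semi-continuity of the $L^p$-norm applied to $\nabla_{\!\bm{B}}f_n\rightharpoonup\nabla_{\!\bm{B}}f$. You are merely more explicit than the paper about properness, convexity, and the weak closedness (via Mazur) of the closed subspace $W^{1,(2,p)}_{\!\bm{A},0}(\Omega)$, points the paper treats as standard.
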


In the following, the matrix $\bm{B}$ denotes the square root of
$\bm{A}$ as introduced in Notation~\ref{not:square-root}.

\begin{proof}
  The bulk of the proof of this proposition has been done in
  Proposition~\ref{prop:1}.  Thus the reminding argument is quite
  standard (see, for
  instance,~\cite{CoulHau2017,MR3465809,zbMATH06275549,hauer07}). For
  the sake of completeness, we provide the details. Let $\alpha>0$ and
  $(f_{n})_{n\ge 1}$ be a sequence in
  $D(\E)=W^{1,(2,p)}_{\!\bm{A},0}(\Omega)$ converging in $L^{2}(\Omega)$
  to an element $f\in L^{2}(\Omega)$ and satisfying
  $\E(f_{n})\le \alpha$ for all $n\ge 1$. Since the sequence
  $(f_{n})_{n\ge 1}$ is bounded in $W^{1,(2,p)}_{\!\bm{A},0}(\Omega)$
  Proposition~\ref{prop:1} yields that
  $f\in W^{1,(2,p)}_{\!\bm{A},0}(\Omega)$ and $(f_{n})_{n\ge 1}$ admits
  a subsequence, which, for simplicity, we denote again by itself, such
  that $\nabla_{\! \bm{B}}f_{n}\rightharpoonup \nabla_{\! \bm{B}}f$
  weakly in $L^{p}(\Omega;\R^{d})$. Moreover, by \eqref{eq:12} one has
  that
  \begin{displaymath}
    \E(f)=\int_{\Omega}\abs{\nabla_{\! \bm{B}}f}_{\Euc}^{p}\dx\le 
    \liminf_{n\to\infty}\int_{\Omega}\abs{\nabla_{\!
        \bm{B}}f}_{\Euc}^{p}\dx\le \alpha.
  \end{displaymath}  
  Therefore, $\E$ is lower semi-continuous on $L^{2}(\Omega)$. The fact
  that $\E$ is densely defined follows from the fact that the set
  $C^{\infty}_{c}(\Omega)$ is contained in
  $D(\E)=W^{1,(2,p)}_{\!A,0}(\Omega)$ and dense in $L^{2}(\Omega)$.
\end{proof}

Next, we intend to compute the sub-gradient $\partial \E$ in
$L^{2}(\Omega)$.

\begin{prop}\label{prop:2bis}
  The restriction $\E_{\vert W^{1,(2,p)}_{\!\bm{A}, 0}}$ of the
  functional $\E$ given by~\eqref{eq:3} is of the class $C^{1}$ and its
  sub-gradient operator $\partial\E$ in $L^{2}(\Omega)$ is a
  well-defined mapping $\partial\E : D(\partial\E) \to L^{2}(\Omega)$
  given by
\begin{displaymath}
    \partial\E(f)=-\nabla\cdot(\abs{\nabla_{\!\bm{A}}f
    }^{p-2}_{\!\bm{A}}\nabla_{\!\bm{A}}f)\qquad\text{in
    $L^{2}(\Omega)$}
\end{displaymath}
for every $f\in D(\partial\E)$, where the domain
\begin{displaymath}
  D(\partial\E)=\Big\{f\in W^{1,(2,p)}_{\!\bm{A},
    0}(\Omega)\,\Big\vert\,
  \begin{array}[c]{c}
    \exists\,h\in L^2(\Omega)\text{ s.t. }\forall\,\xi
    \in W^{1,(2,p)}_{\!\bm{A},
    0}(\Omega),\\
    \langle \E'(f),\xi\rangle=\int_{\Omega}h\,\xi\,\dx\,
  \end{array}
\Big\}.
\end{displaymath}
\end{prop}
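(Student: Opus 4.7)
The plan is to verify $C^{1}$-regularity of $\E$ on $D(\E)=W^{1,(2,p)}_{\!\bm{A}, 0}(\Omega)$ by direct computation, then read off the sub-gradient from standard convex analysis, and finally identify the operator distributionally by integration by parts. Throughout I will use the identity from \eqref{eq:12}, namely $\abs{\nabla_{\!\bm{A}}f}_{\bm{A}}=\abs{\bm{B}\nabla f}_{\Euc}$, so that $\E(f)=\tfrac{1}{p}\int_{\Omega}\abs{\bm{B}\nabla f}^{p}_{\Euc}\dx$ on $D(\E)$.

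First, I would compute the G\^ateaux derivative. The function $\phi(\xi):=\tfrac{1}{p}\abs{\xi}^{p}_{\Euc}$ is convex and of class $C^{1}$ on $\R^{d}$, with $\nabla\phi(\xi)=\abs{\xi}^{p-2}\xi$ (continuously extended by $0$ at the origin, which is valid for every $p>1$). For $f,\xi\in W^{1,(2,p)}_{\!\bm{A},0}(\Omega)$ and $t\in(0,1]$, the difference quotient
\begin{displaymath}
\frac{\E(f+t\xi)-\E(f)}{t}=\int_{\Omega}\frac{\phi(\bm{B}\nabla f+t\bm{B}\nabla\xi)-\phi(\bm{B}\nabla f)}{t}\,\dx
\end{displaymath}
has integrand converging pointwise a.e.\ to $\abs{\bm{B}\nabla f}^{p-2}\langle\bm{B}\nabla f,\bm{B}\nabla\xi\rangle_{\Euc}$, and by the mean-value theorem applied to $\phi$ it is bounded by $C\bigl(\abs{\bm{B}\nabla f}^{p-1}_{\Euc}+\abs{\bm{B}\nabla\xi}^{p-1}_{\Euc}\bigr)\abs{\bm{B}\nabla\xi}_{\Euc}$, which lies in $L^{1}(\Omega)$ by H\"older with exponents $p'$ and $p$. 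Dominated convergence then yields
\begin{displaymath}
\langle\E'(f),\xi\rangle=\int_{\Omega}\abs{\bm{B}\nabla f}^{p-2}_{\Euc}\langle\bm{B}\nabla f,\bm{B}\nabla\xi\rangle_{\Euc}\dx=\int_{\Omega}\abs{\nabla_{\!\bm{A}}f}^{p-2}_{\bm{A}}\langle\nabla_{\!\bm{A}}f,\nabla\xi\rangle_{\Euc}\dx.
\end{displaymath}

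Second, I would show $\E'$ is continuous as a map $W^{1,(2,p)}_{\!\bm{A},0}(\Omega)\to(W^{1,(2,p)}_{\!\bm{A},0}(\Omega))^{*}$. If $f_{n}\to f$ in $W^{1,(2,p)}_{\!\bm{A},0}(\Omega)$, then $\bm{B}\nabla f_{n}\to\bm{B}\nabla f$ in $L^{p}(\Omega;\R^{d})$; passing to a subsequence, convergence also holds a.e.\ and the sequence is equi-integrable. Since $\zeta\mapsto\abs{\zeta}^{p-2}_{\Euc}\zeta$ is continuous on $\R^{d}$ and of growth $p-1$, Vitali's convergence theorem gives $\abs{\bm{B}\nabla f_{n}}^{p-2}\bm{B}\nabla f_{n}\to\abs{\bm{B}\nabla f}^{p-2}\bm{B}\nabla f$ in $L^{p'}(\Omega;\R^{d})$. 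By H\"older, $\E'(f_{n})\to\E'(f)$ in the dual, and the usual subsequence argument gives convergence for the whole original sequence. Thus $\E_{\vert W^{1,(2,p)}_{\!\bm{A},0}}\in C^{1}$.

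Third, I would identify $\partial\E$ in $L^{2}(\Omega)$. By Proposition~\ref{prop:2}, $\E$ is proper, convex, and lower semi-continuous on $L^{2}(\Omega)$; combined with the $C^{1}$-regularity just established on $D(\E)$, a standard convex-analysis fact gives $h\in\partial\E(f)$ if and only if $f\in D(\E)$ and $\langle\E'(f),\xi\rangle=\int_{\Omega}h\,\xi\,\dx$ for every $\xi\in W^{1,(2,p)}_{\!\bm{A},0}(\Omega)$; this is the description of $D(\partial\E)$ in the statement. Specializing to $\xi\in C^{\infty}_{c}(\Omega)\subseteq D(\E)$ and using the formula for $\E'(f)$ shows that
\begin{displaymath}
\int_{\Omega}h\,\xi\,\dx=\int_{\Omega}\abs{\nabla_{\!\bm{A}}f}^{p-2}_{\bm{A}}\langle\nabla_{\!\bm{A}}f,\nabla\xi\rangle_{\Euc}\dx,
\end{displaymath}
so $h=-\nabla\cdot(\abs{\nabla_{\!\bm{A}}f}^{p-2}_{\bm{A}}\nabla_{\!\bm{A}}f)$ in $\D'(\Omega)$. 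Since $h\in L^{2}(\Omega)$, this identity holds in $L^{2}(\Omega)$, completing the proof.

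The main obstacle is the dominated-convergence bound in the G\^ateaux derivative computation when $1<p<2$, where $\nabla\phi$ fails to be differentiable at the origin; this is handled cleanly by the observation that $\nabla\phi$ is $(p-1)$-H\"older and that the mean-value estimate above only uses the $C^{1}$-regularity of $\phi$. The rest is either bookkeeping or direct application of Proposition~\ref{prop:1} and standard subdifferential calculus for lower semi-continuous convex functionals on Hilbert space.
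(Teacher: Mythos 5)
Your proposal is correct and follows essentially the same route as the paper: establish continuous Fr\'echet differentiability of $\E$ on $W^{1,(2,p)}_{\!\bm{A},0}(\Omega)$ with the stated derivative, and then identify $\partial\E(f)$ with the $L^{2}$-representative of $\E'(f)$, reading off the divergence form distributionally. The only difference is one of emphasis: the paper cites the classical case $\bm{A}=\mathbb{I}$ for the $C^{1}$ step (which you work out in detail via dominated convergence and the Nemytskii continuity) and instead writes out explicitly the variational-inequality argument, testing with $g=f+t\xi$ and letting $t\to 0^{\pm}$, which you compress into a standard convex-analysis fact.
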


\begin{proof}
  Recall that the matrix $\bm{B}$ is symmetric and satisfies
  $\bm{B}^2=\bm{A}$. Moreover, one has that \eqref{eq:12bis} and
  \eqref{eq:12}. Thus, proceeding as in the classical case
  $\bm{A}=\mathbb{E}_{d\times d}$ the identity matrix in
  $\R^{d\times d}$ (see, for instance,~\cite{hauer07}) yields that the
  restriction $\E_{\vert W^{1,(2,p)}_{\!\bm{A}, 0}}$ of the functional
  $\E$ given by~\eqref{eq:3} is continuously differentiable with
  Fr\'echet-derivative
 \begin{align*}
     \langle \E'(f),\xi\rangle_{(W^{1,(2,p)}_{\!\bm{A}, 0})',
   W^{1,(2,p)}_{\!\bm{A}, 0}}&=\int_{\Omega}\abs{\nabla_{\!\bm{B}}
     f}^{p-2}_{\Euc}
     \langle \nabla_{\!\bm{B}}f,\nabla_{\!\bm{B}}\xi
     \rangle_{\Euc}\,\dx\\
     &=\int_{\Omega}\abs{\nabla_{\!\bm{A}}f}^{p-2}_{\bm{A}}
     \langle \nabla_{\!\bm{A}}f,\nabla\xi\rangle_{\Euc}
     \,\dx 
 \end{align*}
 for every $f\in W^{1,(2,p)}_{\!\bm{A},0}(\Omega)$. 
 
 Now, let $f\in D(\partial\E)$. Then,
 $f\in W^{1,(2,p)}_{\!\bm{A},0}(\Omega)$ and there is an element
 $h\in L^{2}(\Omega)$ satisfying
 \begin{equation}
     \label{eq:4}
     \langle h,g-f\rangle_{L^{2}(\Omega)}\le \E(g)-\E(f)\qquad\text{for every
     $g\in L^{2}(\Omega)$}
 \end{equation}
 Since $\E$ is convex and differentiable, choosing $g=f+t\xi$
 in~\eqref{eq:4} for given $t>0$ and
 $\xi\in W^{1,(2,p)}_{\!\bm{A},0}(\Omega)$ yields that
 \begin{displaymath}
    \langle h,\xi\rangle_{L^{2}(\Omega)}\le
    \inf_{t>0}\frac{\E(f+t\xi)-\E(f)}{t}=\langle \E'(f),\xi\rangle_{(W^{1,(2,p)}_{\!\bm{A}, 0})',
   W^{1,(2,p)}_{\!\bm{A}, 0}}.
 \end{displaymath}
 Further, taking $t<0$ gives
    \begin{displaymath}
     \langle h,\xi\rangle_{L^{2}(\Omega)}
     \ge\frac{\E(f+t\xi)-\E(f)}{t}
 \end{displaymath}
 and subsequently, letting $t\to 0-$ leads to
 \begin{equation}
     \label{eq:5}
     \langle h,\xi\rangle_{L^{2}(\Omega)}
     =\langle \E'(f),\xi\rangle_{(W^{1,(2,p)}_{\!\bm{A}, 0})',
   W^{1,(2,p)}_{\!\bm{A}, 0}}. 
 \end{equation}
Since \eqref{eq:5} holds for every $\xi\in
W^{1,(2,p)}_{\!\bm{A},0}(\Omega)$, $h\in L^{2}(\Omega)$ is the
unique extension of the distribution $\E'(f)$ in $L^{2}(\Omega)$.
This completes the proof of this proposition.
\end{proof}

The preceding proposition leads to the following definition.

\begin{defn}\label{def:horizontal-p-laplace}
  For a given Riemannian structure $(\R^d,\bm{A})$ with $\bm{A}$
  satisfying Assumption \ref{ass:1}, we call the $2$nd-order
  differential operator $\Delta_{p}^{\!\bm{A}}$ given by
  \eqref{eq:p-A-Laplacian} the \emph{$p$-Laplace operator} associated
  with $\bm{A}$. Further, for a given open subset $\Omega$ of $\R^{d}$,
  we call the operator
 \begin{displaymath}
   -\Delta_{p}^{\!\bm{A},D}f:=\partial\E(f)
   \qquad\text{for every $f\in D(\partial\E)$,} 
 \end{displaymath}
 where $\E$ denotes the functional defined by~\eqref{eq:3}, the
 \emph{Dirichlet $p$-Laplace operator} in $L^{2}(\Omega)$ associated
 with $\bm{A}$.
\end{defn}

\subsection{Examples of p-Laplace operators}
\label{sec:examples}

As an application of the above given theory, we 
consider the following example. We begin with 
dimension $d=1$.

\begin{exmp}\label{ex41}
  Let $K=\{0\}$, and $\Omega=(a,b)\subseteq \R$ be an open interval for
  given $-\infty<a<0<b<\infty$.  Then, $d_K(x)=\abs{x}$ for every
  $x\in \R$,
  and the matrix $\bm{A}$ has only one coefficient; namely,
\begin{displaymath}
  \bm{A}=\Big[\abs{x}^{\alpha}\Big]
\end{displaymath}
for some given exponent $0<\alpha< 2$. According to
\eqref{eq:def-gradient-in-singular-case}, the associated gradient
\begin{displaymath}
  \nabla_{\!\bm{A}}f(x)=\abs{x}^{\alpha}f'(x):=
  \abs{x}^{\alpha}\frac{\td f}{\dx}(x)
\end{displaymath}
for every $x\in\Omega$ and $f\in C^{1}$. Furthermore, the 
functional $\E$ introduced in~\eqref{eq:3} in \emph{local coordinates}
(see~\eqref{eq:12}) reduces to
\begin{equation}
  \label{bet}
  \E(f)=
  \begin{cases}
  \displaystyle
  \tfrac{1}{p}\int_{a}^{b}\abs{\abs{x}^{\frac{\alpha}{2}}f'(x)}^{p}\,
  \dx & \text{if $f\in W^{1,(2,p)}_{\!\bm{A},0}(a,b)$,}\\[4pt]
  +\infty & \text{otherwise,}
  \end{cases}
\end{equation}
for every $f\in L^2(a,b)$ and according to Proposition~\ref{prop:2},
the \emph{Dirichlet $p$-Laplace operator} $\Delta^{\!\bm{A}}_{p}$ on
$\Omega$ associated with the Riemannian structure
$(\R,\abs{x}^{\beta})$ has the form
\begin{displaymath}
 \Delta_{p}^{\!\bm{A}}f=
 \Big[ \abs{x}^{p\frac{\alpha}{2}}\abs{f'}^{p-2}f'\Big]'
\end{displaymath}
for every $f\in D(\Delta_{p}^{\!\bm{A}})$.
\end{exmp}



%

Next, we consider the higher dimensional case $d=n+m\ge 2$ and the
matrix $\bm{A}$ induces the \emph{generalized Gru\v{s}in} space from
Example \ref{ex:1}~\ref{ex:gen-grusin}.

\begin{exmp}\label{ex:2-gen-grushin}
  For integers $d$, $m$, $n\ge 0$, and for $0\le \alpha<2$, let
  $d=n+m\ge 1$, $\beta_1$, \dots, $\beta_m\ge 0$ and $\bm{A}$ be the
  $d\times d$-diagonal matrix given by
  \eqref{eq:genGrushin-Matrix}. Further, let
  $K:=\{(0,y)\in \R^{n+m}\,\vert\,y\in \R^m\}$ and
  $\Omega\subseteq \R^{d}$ be an open subset such that
  $K\cap \Omega\neq \emptyset$. Then due to the
  relation~\eqref{eq:Grushin-gradient-norm}, the functional
  $\E : L^2(\Omega)\to [0,\infty]$ given by~\eqref{eq:3} reduces to
\begin{displaymath}
  \E(f)=
  \begin{cases}
  \displaystyle
  \tfrac{1}{p}\int_{\Omega}\left[ 
        |x|^{\alpha}_{n}\sum_{i=1}^n\labs{\tfrac{\partial f}{\partial x_i}}^2
        +\sum_{j=1}^{m}\abs{x}_n^{\beta_{j}}
        \labs{\tfrac{\partial f}{\partial y_j}}^2\right]^{\frac{p}{2}}\!
  \dx & \!\!\text{if $f\in W^{1,(2,p)}_{\!\bm{A},0}(\Omega)$,}\\[4pt]
  +\infty & \!\!\text{otherwise,}
  \end{cases} 
\end{displaymath}
for every $f\in L^2(\Omega)$. Accordingly to Proposition~\ref{prop:2},
the \emph{Dirichlet $p$-Laplace operator} $\Delta^{\!\bm{A}}_{p}$ on
$\Omega$ associated with the Riemannian structure $(\R,\bm{A})$ has the
form
\begin{displaymath}
\begin{split}
 \langle\Delta_{p}^{\!\bm{A}}f,g\rangle_{L^2,L^2}
 &=\int_{\Omega}
 \left[|x|^{\alpha}_{n}\sum_{i=1}^n\labs{\tfrac{\partial f}{\partial x_i}}^2
        +\sum_{j=1}^{m}\abs{x}_n^{\beta_{j}}
        \labs{\tfrac{\partial f}{\partial y_j}}^2\right]^{\frac{p-2}{2}}
        \times\\
      &\hspace{2cm} \times\left[|x|^{\alpha}_{n}\sum_{i=1}^{n}
        \tfrac{\partial f}{\partial x_i}\tfrac{\partial g}{\partial x_i} 
        +\sum_{j=1}^{m}\abs{x}_n^{\beta_{j}}
        \tfrac{\partial f}{\partial y_j}
        \tfrac{\partial g}{\partial y_j}\right]\,\dx
 \end{split}
\end{displaymath}
for every $f$, $g\in D(\Delta_{p}^{\!\bm{A}})$.
\end{exmp}

We conclude this section with the following example of a functional $\E$
associated with the sub-Riemannian structure $(\R^{d},\bm{A})$ induced
by \eqref{eq:monomilal} in Example~\ref{ex:1}
\ref{ex:monomial-weight}.

\begin{exmp}
  For integers $d\ge 1$, and for $0\le \alpha_1,\dots, \alpha_d <1$, let
$\bm{A}$ be the $d\times d$-diagonal matrix given by \eqref{eq:monomilal}. Thanks to~\eqref{eq:12}, for the matrix $\bm{A}$ given by \eqref{eq:monomilal}, one has that
\begin{equation}
    \label{eq:ell2-norm-of-A-monomial}
     \abs{\nabla_{\!\bm{A}}f}_{\bm{A}}=\sqrt{\sum_{i=1}^d\labs{\abs{x_i}^{\frac{\alpha_i}{2}}
     \tfrac{\partial f}{\partial x_i}}^2}=\norm{(\abs{x_i}^{\frac{\alpha_i}{2}}\tfrac{\partial f}{\partial x_i})_{i=1}^d}_{\Euc}.
\end{equation}
Thus,  for any open subset $\Omega$ of $\R^d$, the functional $\E : L^2(\Omega)\to [0,+\infty]$ given by~\eqref{eq:3} can be rewritten explicitly as
\begin{displaymath}
  \E(f)=
  \begin{cases}
  \displaystyle
  \tfrac{1}{p}\int_{\Omega}\left[ \sum_{i=1}^d\labs{\abs{x_i}^{\frac{\alpha_i}{2}}
     \tfrac{\partial f}{\partial x_i}}^2
        \right]^{\frac{p}{2}}\!
  \dx & \!\!\text{if $f\in W^{1,(2,p)}_{\!\bm{A},0}(\Omega)$,}\\[4pt]
  +\infty & \!\!\text{otherwise,}
  \end{cases}
\end{displaymath}
for every $f\in L^2(\Omega)$.\medskip 

For later use, we introduce, in addition, the functional
$\tilde \E : C^1_c(\R^d)\to [0,+\infty]$ given by
\begin{displaymath}
  \tilde{\E}(f)=
  \tfrac{1}{p}\int_{\R^d}\abs{\nabla \tilde{f}(x)}^p_{\ell_p}\,\omega(x)\,
  \dx
\end{displaymath}
for every $f\in C^1_c(\R^d)$, where we associate to $f$ a function
$\tilde f$ by setting
\begin{displaymath}
   \tilde{f}(x)= f(x_1^{n_1},\dots,x_d^{n_d})\quad\text{ and }\quad
   \nabla \tilde{f}(x)=\left(\tfrac{\partial\tilde f}{\partial x_1}(x),
     \dots, \tfrac{\partial\tilde f}{\partial x_d}(x)\right)
\end{displaymath}
for every $x=(x_1,\dots,x_d)\in \R^d$ with $n_i:=1/(1-\frac{\alpha_i}{2})$ for
every $i=1,\,\dots, n$, and
\begin{displaymath}
    \omega(x)=|x_1|^{n_1-1}\ldots |x_d|^{n_d-1}.
\end{displaymath}    
Since
\begin{displaymath}
 \frac{\partial \tilde f}{\partial x_i}(x)= 
 \left[\frac{\partial f}{\partial x_i}\right](x_1^{n_1},\dots, x_d^{n_d})
 \,n_i\,x_i^{n_i -1},
\end{displaymath}
applying the substitution $y_i=x_i^{n_i}$ to $\frac{\partial \tilde f}{\partial x_i}$ yields that
\begin{displaymath}
\frac{\partial \tilde f}{\partial x_i}(x) = \left[\frac{\partial f}{\partial x_i}\right](y_1,\dots, y_d)\,n_i\,y_i^{1-\frac{1}{n_i}}=\left[\frac{\partial f}{\partial x_i}\right](y_1,\dots, y_d)\,n_i\,y_i^{\frac{\alpha_i}{2}}
\end{displaymath}
and so,
\begin{align*}
 & \int_{\R^d}\abs{\nabla \tilde{f}(x)}^p_{\ell_p}\,\omega(x)\,\dx\\
  &\qquad = 
    \int_{\R^d}\sum_{i=1}^d\labs{\left[\tfrac{\partial f}{\partial
    x_i}\right](y_1,\dots, y_d)\,n_i\,\abs{y_i}^{\frac{\alpha_i}{2}}}^{p} \,\dy\\
  &\qquad = 
    \int_{\R^d}\sum_{i=1}^d\labs{\left[\tfrac{\partial f}{\partial
    x_i}\right](x)\,n_i\,\abs{x_i}^{\frac{\alpha_i}{2}}}^{p}\,\dx
\end{align*}
(cf. the computation given in~\cite[(3.8) in
  Corollary~3.5]{MR3097258}). Since the norms $\abs{\cdot}_{\Euc}\sim \abs{\cdot}_{\ell_p}$ are
equivalent on $\R^d$, we have thereby shown that
\begin{equation}
  \label{eq:eqiv-E-with-weigths}
  \E(f)\sim \tfrac{1}{p}\int_{\R^d}\abs{\nabla \tilde{f}(x)}^p_{\ell_p}\,\omega(x)\,\dx
\end{equation}
for every $f\in C^{1}_c(\Omega)$.
\end{exmp}


\subsection{The associated carr\'e du champ}
\label{sec:carre-du-champ}
{We should include a discussion about the \emph{carr\'e du champ} $\Gamma$ associated with $-\Delta_{2}^{\bm{A}}$
(see, for example, Section~I.4 of \cite{BH}).}\medskip 

In the case $p=2$ and $\Omega=\R^d$, the operator $\Delta_{2}^{\bm{A}}:=-\partial\E$ in $L^2(\R^d)$ becomes a linear degenerate elliptic operator in divergence form 
\begin{equation}
\label{eq:Delta-A-2}
\Delta^{\bm{A}}_{2} f=\sum_{i,j=1}^{d} \tfrac{\partial}{\partial
    x_j}
\Big(a_{ij}(x) \tfrac{\partial f}{\partial
    x_i}\Big)=\divi(\nabla_{\!\bm{A}}f)\quad\text{in $L^{2}(\R^d)$}
\end{equation}
for every $f\in C^{\infty}_{c}(\R^{d})$. Further, multiplying this equation by $g\in C^{\infty}_{c}(\R^{d})$ with respect to the $L^2$-inner product, and subsequently integrating by parts yields that
\begin{displaymath}
 \langle -\Delta^{\bm{A}}_{2}f,g
 \rangle_{L^2}= \int_{\R^{d}}\langle
 \bm{A}\nabla f,\nabla g\rangle_{\Euc}\,\dx
\end{displaymath}
for every $f$, $g\in C^{\infty}_{c}(\R^{d})$.

Formally, the \emph{carr\'e du champ} $\Gamma$ is given by
\begin{displaymath}
\Gamma_{\!\bm{A}}(f):=f(-\Delta_{2}^{\bm{A}}f)+2^{-1}
\Delta_{2}^{\!\bm{A}}(f^2)
\end{displaymath}
for every $f\in C_c^{\infty}(\R^d)$. Recall that
\begin{displaymath}
  \Gamma_{\!\bm{A}}(f)(x)=\sum_{i,j=1}^{d}a_{ij}(x)\,\tfrac{\partial f}{\partial
    x_i}(x)\,\tfrac{\partial f}{\partial
    x_j}(x)
\end{displaymath}
for every $f\in C_c^{\infty}(\R^d)$ and according to~\cite{MR2778606} and Kato's square root property, one has that
\begin{displaymath}
\norm{\Gamma_{\!\bm{A}}(f)}_1=\tfrac{1}{2}\norm{(-\Delta_{2}^{\bm{A}})^{1/2}f}_2^2
\end{displaymath}
for every $f\in C^{\infty}_c(\R^d)$.

\begin{rem} 
One important task is to understand the intimate relation between the carr\`e du champ operator $\Gamma$ associated with linear $2^{\textrm{nd}}$-order differential operator $\Delta_{2}^{\bm{A}}$ introduced in~\eqref{eq:Delta-A-2} and the nonlinear  \emph{$p$-Laplace operator}
\begin{displaymath}
   \Delta_{p}^{\!\bm{A}}f
   :=\nabla\cdot\left(\abs{\nabla_{\!\bm{A}}f}^{p-2}\nabla_{\!\bm{A}}f\right),
\end{displaymath}
under the Assumption~\ref{ass:1} on the Riemannian structure $(\R^d,\bm{A})$. 
\end{rem}

The last proposition of this section has been 
established in the smooth Riemannian setting in 
\cite[Propostion~2.1]{MR2001444}.

\begin{prop}
  \label{prop:bdd-Riesz-transform}
     Let $(\R^d,\bm{A})$ be a Riemannian structure with matrix $\bm{A}$ satisfying Assumption \ref{ass:1}. Let $1<p<\infty$ and $p^{\mbox{}_{\prime}}=p/(p-1)$. If for some $C_p>0$,
     \begin{equation}
         \label{eq:bddRiesz}
         \norm{\abs{\nabla_{\!\bm{A}}f}_{\bm{A}}}_p \le C_p\,
         \norm{(-\Delta_2^{\!\bm{A}})^{1/2}f}_p\qquad\text{for all }
         f\in C^{\infty}_c(\R^d),
     \end{equation}
     then one has that
     \begin{equation}
         \label{eq:bdd-reverse-Riesz}
         \norm{(-\Delta_2^{\!\bm{A}})^{1/2}f}_{p^{\mbox{}_{\prime}}}
         \le C_p\, \norm{\abs{\nabla_{\!\bm{A}}f}_{\bm{A}}}_{p^{\mbox{}_{\prime}}} 
         \qquad\text{for all }
         f\in C^{\infty}_c(\R^d).
     \end{equation}
\end{prop}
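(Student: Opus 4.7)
The plan is to establish \eqref{eq:bdd-reverse-Riesz} via a standard duality argument built on the spectral self-adjointness of $(-\Delta_2^{\!\bm{A}})^{1/2}$ on $L^2(\R^d)$ together with the quadratic form identity
\begin{displaymath}
  \langle (-\Delta_2^{\!\bm{A}})^{1/2}f,(-\Delta_2^{\!\bm{A}})^{1/2}h\rangle_{L^2}
  =\int_{\R^d}\langle \bm{B}\nabla f,\bm{B}\nabla h\rangle_{\Euc}\,\dx
\end{displaymath}
valid for sufficiently regular $f,h$ (in particular, $f,h\in C^\infty_c(\R^d)$), which follows from Proposition~\ref{prop:2bis} at $p=2$ and the fact that $-\Delta_2^{\!\bm{A}}=\partial\E_{|p=2}$ is a nonnegative self-adjoint operator on $L^2(\R^d)$.

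Next, by $L^{p^{\mbox{}_{\prime}}}$--$L^p$ duality,
\begin{displaymath}
  \norm{(-\Delta_2^{\!\bm{A}})^{1/2}f}_{p^{\mbox{}_{\prime}}}
  =\sup\Big\{\labs{\langle (-\Delta_2^{\!\bm{A}})^{1/2}f,g\rangle_{L^2}}\,\Big\vert\,
     g\in C^\infty_c(\R^d),\ \norm{g}_p\le 1\Big\}.
\end{displaymath}
Fix such a $g$ and set $h:=(-\Delta_2^{\!\bm{A}})^{-1/2}g$, defined through spectral calculus. Using self-adjointness and the quadratic form identity above, I would write
\begin{displaymath}
 \langle (-\Delta_2^{\!\bm{A}})^{1/2}f,g\rangle_{L^2}
 =\langle (-\Delta_2^{\!\bm{A}})^{1/2}f,(-\Delta_2^{\!\bm{A}})^{1/2}h\rangle_{L^2}
 =\int_{\R^d}\langle \bm{B}\nabla f,\bm{B}\nabla h\rangle_{\Euc}\,\dx.
\end{displaymath}
Applying the pointwise Cauchy--Schwarz inequality (together with \eqref{eq:12}) followed by H\"older's inequality with conjugate exponents $p^{\mbox{}_{\prime}},p$ yields
\begin{displaymath}
 \labs{\langle (-\Delta_2^{\!\bm{A}})^{1/2}f,g\rangle_{L^2}}
 \le \int_{\R^d}\abs{\nabla_{\!\bm{A}}f}_{\bm{A}}\,\abs{\nabla_{\!\bm{A}}h}_{\bm{A}}\,\dx
 \le \norm{\abs{\nabla_{\!\bm{A}}f}_{\bm{A}}}_{p^{\mbox{}_{\prime}}}
     \norm{\abs{\nabla_{\!\bm{A}}h}_{\bm{A}}}_p.
\end{displaymath}
Invoking hypothesis \eqref{eq:bddRiesz} for $h$ gives
$\norm{\abs{\nabla_{\!\bm{A}}h}_{\bm{A}}}_p\le C_p\norm{(-\Delta_2^{\!\bm{A}})^{1/2}h}_p=C_p\norm{g}_p\le C_p$,
and taking the supremum over $g$ with $\norm{g}_p\le 1$ yields \eqref{eq:bdd-reverse-Riesz}.

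The main technical obstacle is justifying that \eqref{eq:bddRiesz} may be applied to the function $h=(-\Delta_2^{\!\bm{A}})^{-1/2}g$, which a priori is only an element of an abstract spectral domain and not automatically in $C^\infty_c(\R^d)$. The standard remedy, which I would employ here, is to approximate $h$ by $h_t:=e^{-t(-\Delta_2^{\!\bm{A}})}h$ as $t\downarrow 0$ (so that $(-\Delta_2^{\!\bm{A}})^{1/2}h_t=e^{-t(-\Delta_2^{\!\bm{A}})}g\to g$ in $L^p$ via $L^p$-contractivity of the semigroup associated with $\E$) and to further approximate $h_t$ by functions in $C^\infty_c(\R^d)$ using a density argument based on Proposition~\ref{prop:2} (density of $C^\infty_c$ in $D(\E)$) together with a cut-off adapted to the exhaustion $\Omega_m$ from \eqref{eq:set-Km}. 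Up to carrying out these approximation steps, the three-line duality argument above gives \eqref{eq:bdd-reverse-Riesz}.
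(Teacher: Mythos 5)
Your overall strategy---the quadratic form identity, Cauchy--Schwarz in the $\bm{A}$-metric, H\"older, and then the hypothesis \eqref{eq:bddRiesz} via duality---is the same as the paper's, but the way you set up the duality creates a gap that the paper's proof is specifically engineered to avoid, and the step you flag as a ``technical obstacle'' is in fact the crux. You pair $(-\Delta_2^{\!\bm{A}})^{1/2}f$ against an arbitrary $g\in C^\infty_c(\R^d)$ with $\norm{g}_p\le 1$ and then need to apply \eqref{eq:bddRiesz} to $h:=(-\Delta_2^{\!\bm{A}})^{-1/2}g$. Two problems: first, since $0$ lies in the spectrum of $-\Delta_2^{\!\bm{A}}$ on $L^2(\R^d)$ (there is no spectral gap on the whole space), a compactly supported smooth $g$ need not lie in the range of $(-\Delta_2^{\!\bm{A}})^{1/2}$, so $h$ may fail to exist as an element of $L^2(\R^d)$ at all; the formula $h=\pi^{-1/2}\int_0^\infty t^{-1/2}e^{t\Delta_2^{\!\bm{A}}}g\,\dt$ need not converge at $t=\infty$. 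Second, even where $h$ makes sense, \eqref{eq:bddRiesz} is only assumed for $f\in C^\infty_c(\R^d)$, and extending it to $h$ requires a density statement for $C^\infty_c$ in the relevant homogeneous space associated with $(-\Delta_2^{\!\bm{A}})^{1/2}$ in $L^p$; your sketched approximation $h_t=e^{t\Delta_2^{\!\bm{A}}}h$ followed by cut-offs does not address the convergence of $\norm{\abs{\nabla_{\!\bm{A}}h_t}_{\bm{A}}}_p$ and so does not close the argument.

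The paper's remedy is to reverse the roles so that the hypothesis is only ever applied to genuine test functions: one shows, for $f,g\in C^\infty_c(\R^d)$, that
\begin{displaymath}
  \left\langle (-\Delta_2^{\!\bm{A}})^{1/2}f,(-\Delta_2^{\!\bm{A}})^{1/2}g\right\rangle_{L^2}
  \le \norm{\abs{\nabla_{\!\bm{A}}f}_{\bm{A}}}_p\,\norm{\abs{\nabla_{\!\bm{A}}g}_{\bm{A}}}_{p'}
  \le C_p\,\norm{(-\Delta_2^{\!\bm{A}})^{1/2}f}_p\,\norm{\abs{\nabla_{\!\bm{A}}g}_{\bm{A}}}_{p'},
\end{displaymath}
and then takes the supremum over $f\in C^\infty_c(\R^d)$ with $\norm{(-\Delta_2^{\!\bm{A}})^{1/2}f}_p\le 1$. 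The key external input is that the set $(-\Delta_2^{\!\bm{A}})^{1/2}(C^\infty_c(\R^d))$ is dense in $L^p(\R^d)$ (Russ, \cite[Lemma~1]{MR1776969}), which makes this restricted supremum compute the full $L^{p'}$-norm of $(-\Delta_2^{\!\bm{A}})^{1/2}g$. No inverse square root ever appears. If you replace your construction of $h$ by this density lemma and run the duality over the image $(-\Delta_2^{\!\bm{A}})^{1/2}(C^\infty_c(\R^d))$ instead of over all of the unit ball of $L^p$, your argument becomes the paper's proof.
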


Our proof follows the same idea as given by Coulhon and Duong in
\cite{MR2001444}.

\begin{proof}
    Let $f$, $g\in C^{\infty}_c(\R^d)$. Then, by the definition of $\Delta_{2}^{\!\bm{A}}$ (see~\eqref{eq:Delta-A-2}) and by Green's formula, one has that
    \begin{displaymath}
        \langle -\Delta_{2}^{\!\bm{A}}f,g \rangle_{L^2(\R^d)}=\int_{\R^d}\langle \nabla_{\!\bm{A}}f,\nabla g\rangle_{\Euc}\,\dx
    \end{displaymath}
    For every $m\ge 1$, let $\Omega_m$ be the set given by \eqref{eq:set-Km} where here $\Omega=\R^d$. Then, for every $m\ge 1$, one has that
    \begin{align*}
     \int_{\Omega_m}\langle \nabla_{\!\bm{A}}f,\nabla g\rangle_{\Euc}\,\dx
     &= \int_{\Omega_m}\langle \nabla_{\!\bm{A}}f,\nabla_{\!\bm{A}} g\rangle_{\bm{A}}\,\dx\\
     &\le \int_{\Omega_m}\abs{\nabla_{\!\bm{A}}f}_{\bm{A}}\,\abs{\nabla_{\!\bm{A}}g}_{\bm{A}}\,\dx\\
     &\le \int_{\R^d}\abs{\nabla_{\!\bm{A}}f}_{\bm{A}}\,
     \abs{\nabla_{\!\bm{A}}g}_{\bm{A}}\,\dx
    \end{align*}
     and so, H\"older's inequality yields that
    \begin{displaymath}
        \int_{\Omega_m}\langle \nabla_{\!\bm{A}}f,\nabla g\rangle_{\Euc}\,\dx\le 
        \norm{\abs{\nabla_{\!\bm{A}}f}_{\bm{A}}}_p\,
        \norm{\abs{\nabla_{\!\bm{A}}g}_{\bm{A}}}_{p^{\mbox{}_{\prime}}}
    \end{displaymath}
    for every $m\ge 1$. Since,
    \begin{displaymath}
        \langle \nabla_{\!\bm{A}}f,\nabla g\rangle_{\Euc}\in L^1(\R^d),
    \end{displaymath}
    we have that
    \begin{displaymath}
        \lim_{m\to\infty}\int_{\Omega_m}\langle \nabla_{\!\bm{A}}f,\nabla g\rangle_{\Euc}\,\dx
        =\int_{\R^d}\langle \nabla_{\!\bm{A}}f,\nabla g\rangle_{\Euc}\,\dx
    \end{displaymath}
    and so, we have shown that
    \begin{displaymath}
        \langle -\Delta_{2}^{\!\bm{A}}f,g \rangle_{L^2(\R^d)}
        \le \norm{\abs{\nabla_{\!\bm{A}}f}_{\bm{A}}}_p\,
        \norm{\abs{\nabla_{\!\bm{A}}g}_{\bm{A}}}_{p^{\mbox{}_{\prime}}}.
    \end{displaymath}
    On the other hand, since the square root $(-\Delta_{2}^{\!\bm{A}})^{1/2}$ of $-\Delta_{2}^{\!\bm{A}}$ is self-adjoint, one has that
    \begin{displaymath}
        \langle -\Delta_{2}^{\!\bm{A}}f,g \rangle_{L^2(\R^d)}
        =\langle (-\Delta_{2}^{\!\bm{A}})^{1/2}f,
        (-\Delta_{2}^{\!\bm{A}})^{1/2}g \rangle_{L^2(\R^d)}.
    \end{displaymath}
    Therefore, if \eqref{eq:bddRiesz} holds, then
    \begin{displaymath}
        \langle (-\Delta_{2}^{\!\bm{A}})^{1/2}f,
        (-\Delta_{2}^{\!\bm{A}})^{1/2}g \rangle_{L^2(\R^d)}
        \le C_p\,\norm{(-\Delta_{2}^{\!\bm{A}})^{1/2}f}_p\,
        \norm{\abs{\nabla_{\!\bm{A}}g}_{\bm{A}}}_{p^{\mbox{}_{\prime}}}.
    \end{displaymath}
    By \cite[Lemma~1]{MR1776969}, the set $(-\Delta_{2}^{\!\bm{A}})^{1/2}(C^{\infty}_c(\R^d))$ is dense in $L^{p}(\R^d)$. Hence, 
    \begin{displaymath}
        \norm{(-\Delta_{2}^{\!\bm{A}})^{1/2}g}_{p^{\mbox{}_{\prime}}}
        =\sup_{\stackrel{f\in C^{\infty}_c(\R^d)\text{ s.t. }}{
        \norm{(-\Delta_{2}^{\!\bm{A}})^{1/2}f}_p\le 1}}\left\langle (-\Delta_{2}^{\!\bm{A}})^{1/2}f,
        (-\Delta_{2}^{\!\bm{A}})^{1/2}g \right\rangle_{L^2(\R^d)}
    \end{displaymath}
    and so, the last inequality implies that \eqref{eq:bdd-reverse-Riesz} holds.
\end{proof}


\section{The semigroup generated by the p-Laplace operator}
\label{sec:semigroup-generation}

Due to Proposition \ref{prop:2}, it follows from the classical theory of
nonlinear semigroups in Hilbert spaces (see, for example,
\cite[Th\'eor\`eme~3.2]{MR0348562} or, alternatively,
\cite[Theorem~4.11]{MR2582280}) that for every open subset $\Omega$ of
$\R^d$, the Dirichlet $p$-Laplace operator $\Delta_p^{\!\bm{A},D}$
generates a $C_0$-semigroup $\{e^{t\Delta_p^{\!\bm{A},D}}\}_{t\ge 0}$ of
contractions $e^{t\Delta_p^{\!\bm{A},D}}$ on $L^{2}(\Omega)$ with the
regularizing effect that for every $f\in L^{2}(\Omega)$,
\begin{equation}
  \label{eq:domain-smoothing-effect-semigroup}
  e^{t\Delta_p^{\!\bm{A},D}}f\in D(\Delta_p^{\!\bm{A},D})\qquad\text{for
  every $t>0$.}
\end{equation}
In particular,
we have the following result, where, we write $[u]^{\nu}$ with
$\nu\in \{+,1\}$ for either denoting the \emph{positive part}
$[u]^{+}=\max\{0,u\}$ of $u$ or $[u]^{1}$ for $u$ itself.

\begin{prop}\label{prop:semigroup}
  Let $(\R^d,\bm{A})$ be a Riemannian structure with matrix $\bm{A}$
  satisfying Assumption \ref{ass:1}, and $\Omega$ and open subset
  of $\R^d$. Then, for every $t>0$, the mapping
  $e^{t\Delta_p^{\!\bm{A},D}}$ of the semigroup
  $\{e^{t\Delta_p^{\!\bm{A},D}}\}_{t\ge 0}$ admits a unique contractive
  extension on $L^{q}(\Omega)$ for $1\le q<\infty$ and on
  $\overline{L^{2}(\Omega)\cap
    L^{\infty}(\Omega)}^{\mbox{}_{L^{\infty}}}$
  such that $\{e^{t\Delta_p^{\!\bm{A},D}}\}_{t\ge 0}$ forms a
  $C_0$-semigroup on $L^{q}(\Omega)$ for $1\le q<\infty$. Furthermore,
  for every $f$ and $g\in L^{q}(\Omega)$ if $1\le q<\infty$
  (respectively, $f$ and
  $g\in \overline{L^{2}(\Omega)\cap
    L^{\infty}(\Omega)}^{\mbox{}_{L^{\infty}}}$), one has that
  \begin{displaymath}
       \norm{[e^{t\Delta_p^{\!\bm{A}}}f-e^{t\Delta_p^{\!\bm{A}}}g]^{\nu}}_{q}
       \le \norm{[e^{s\Delta_p^{\!\bm{A}}}f-e^{s\Delta_p^{\!\bm{A}}}g]^{\nu}}_{q} 
  \end{displaymath}
 for every $0\le s<t<\infty$, and $\nu\in \{+,1\}$.
\end{prop}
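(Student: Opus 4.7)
The plan is to invoke the Bénilan--Crandall theory of completely accretive operators on top of the Hilbert-space generation result given just before the proposition. By Proposition 4.3 (with Proposition 4.2), the Dirichlet $p$-Laplace operator $-\Delta_{p}^{\!\bm{A},D}=\partial\E$ is maximal monotone on $L^{2}(\Omega)$, so the resolvent $J_{\lambda}:=(I+\lambda\partial\E)^{-1}$ is well-defined and $L^{2}$-contractive for every $\lambda>0$, and the exponential formula $e^{t\Delta_{p}^{\!\bm{A},D}}f=\lim_{n\to\infty}J_{t/n}^{n}f$ produces the semigroup. The key point to add is that both $J_{\lambda}$ and hence $e^{t\Delta_{p}^{\!\bm{A},D}}$ are order-preserving and $L^{q}$-contractive for every $1\le q\le\infty$, from which the desired extension and inequalities follow.

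The first step is to verify the Markovian/Dirichlet property of $\E$: for every normal contraction $T\in C^{0,1}(\R)$ with $T(0)=0$ and $\norm{T'}_{\infty}\le 1$, one has $\E(T\circ f)\le \E(f)$ for all $f\in D(\E)$. Thanks to the representation \eqref{eq:12}, this reduces to the pointwise bound $\abs{\bm{B}\nabla(T\circ f)}_{\Euc}\le \abs{\bm{B}\nabla f}_{\Euc}$ a.e., which follows from the chain rule $\nabla(T\circ f)=T'(f)\nabla f$ and $\abs{T'(f)}\le 1$; the chain rule is first established for smooth $T$ on the approximating sets $\Omega_{m}$ defined in \eqref{eq:set-Km}, where $\bm{B}\in L^{\infty}_{\loc}$, and then passed to the limit using Proposition \ref{prop:1}\ref{prop:1-claim1}. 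Membership in the closed convex set $W^{1,(2,p)}_{\!\bm{A},0}(\Omega)$ is preserved by $T$ because $T(0)=0$ and one can approximate $f$ by test-functions.

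Second, I would translate this into the complete accretivity of $\partial\E$ in the Bénilan--Crandall sense: for every $f_{1},f_{2}\in D(\partial\E)$ with $v_{i}\in\partial\E(f_{i})$ and every non-decreasing $T\in C^{0,1}(\R)$ with $T(0)=0$, the inequality $\int_{\Omega}T(f_{1}-f_{2})(v_{1}-v_{2})\,\dx\ge 0$ holds. This is a standard consequence of the Markovian property combined with the convexity of $\E$ (compare e.g.\ \cite{CoulHau2017,hauer07} cited in the paper). Complete accretivity of $\partial\E$ then implies, by the Bénilan--Crandall extension theorem, that each resolvent $J_{\lambda}$ admits a unique contractive extension to every $L^{q}(\Omega)$ with $1\le q<\infty$ and to $\overline{L^{2}(\Omega)\cap L^{\infty}(\Omega)}^{L^{\infty}}$, and satisfies the truncation inequalities $\norm{[J_{\lambda}f-J_{\lambda}g]^{\nu}}_{q}\le\norm{[f-g]^{\nu}}_{q}$ for $\nu\in\{+,1\}$.

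Finally, combining these extensions with the Crandall--Liggett exponential formula yields the corresponding extensions of $e^{t\Delta_{p}^{\!\bm{A},D}}$ on each $L^{q}(\Omega)$, together with the two-orbit contraction inequality of the proposition by a standard subadditive/semigroup argument (apply the resolvent estimate iteratively and pass to the limit $n\to\infty$, or alternatively use the identity $e^{t\Delta_{p}^{\!\bm{A},D}}=e^{(t-s)\Delta_{p}^{\!\bm{A},D}}\circ e^{s\Delta_{p}^{\!\bm{A},D}}$ together with the $L^{q}$-contraction of $e^{(t-s)\Delta_{p}^{\!\bm{A},D}}$ applied to the pair $(e^{s\Delta_{p}^{\!\bm{A},D}}f,e^{s\Delta_{p}^{\!\bm{A},D}}g)$). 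The $C_{0}$-property on $L^{q}$ for $1\le q<\infty$ follows from density of $L^{2}(\Omega)\cap L^{q}(\Omega)$ in $L^{q}(\Omega)$, the $L^{q}$-contractivity of the extensions, and strong continuity on the dense subspace inherited from the $L^{2}$-semigroup. I expect the main obstacle to be the careful justification of the chain rule and the density/approximation argument on the exhaustion $\Omega_{m}$, since the weight $\bm{B}$ may degenerate on the singular sets $K_{i}$; however, Assumption \ref{ass:1} guarantees $\mathcal{L}^{d}(\bigcup K_{i})=0$ and $\bm{B}\in L^{\infty}_{\loc}(\Omega_{m})$, which is enough to push the argument through.
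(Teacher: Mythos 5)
Your overall strategy --- generate the semigroup on $L^{2}(\Omega)$ from the maximal monotone operator $\partial\E$, verify that $\partial\E$ is \emph{completely accretive} in the sense of B\'enilan--Crandall, and then use their extension theorem together with the exponential formula to obtain the contractive extensions, the truncation inequalities and the $C_{0}$-property on $L^{q}(\Omega)$ --- is exactly the route the paper takes: its proof cites \cite{MR1164641} and reduces everything to the single inequality \eqref{eq:complete-accretive}. Your last two paragraphs (extension of the resolvents, exponential formula, two-orbit inequality via the semigroup property, density argument for strong continuity) are fine.

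The gap is in your second step. You propose to deduce complete accretivity of $\partial\E$ from the \emph{one-function} Markov property $\E(T\circ f)\le\E(f)$ together with convexity, calling this ``a standard consequence.'' It is not: for a general convex, proper, l.s.c.\ functional the subgradient inequality only yields
\begin{displaymath}
  \langle v_{1}-v_{2},\,T(f_{1}-f_{2})\rangle
  \;\ge\;\E(f_{1})+\E(f_{2})-\E\bigl(f_{1}-T(f_{1}-f_{2})\bigr)-\E\bigl(f_{2}+T(f_{1}-f_{2})\bigr),
\end{displaymath}
so the condition you actually need is the \emph{two-function} inequality $\E(f_{1}-T(f_{1}-f_{2}))+\E(f_{2}+T(f_{1}-f_{2}))\le\E(f_{1})+\E(f_{2})$ --- precisely the criterion of \cite[Theorem~3.9]{MR3465809}, which the paper names as its alternative proof. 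Your one-function property is the special case $f_{2}=0$ and does not imply the general case; all the standard nonlinear references (including \cite{CoulHau2017}) work with the two-function version. Fortunately the repair costs little more than the chain-rule computation you already set up: writing $\theta:=T'(f_{1}-f_{2})\in[0,1]$, one has $\nabla(f_{1}-T(f_{1}-f_{2}))=(1-\theta)\nabla f_{1}+\theta\nabla f_{2}$ and $\nabla(f_{2}+T(f_{1}-f_{2}))=\theta\nabla f_{1}+(1-\theta)\nabla f_{2}$, and the pointwise convexity of $\xi\mapsto\abs{\bm{B}\xi}_{\Euc}^{p}$ gives the two-function inequality after integration. The paper sidesteps the issue entirely by verifying \eqref{eq:complete-accretive} directly on $D(\partial\E)$: integrate by parts and use that $\langle\abs{\bm{B}\nabla f}^{p-2}_{\Euc}\bm{B}\nabla f-\abs{\bm{B}\nabla g}^{p-2}_{\Euc}\bm{B}\nabla g,\,\bm{B}\nabla f-\bm{B}\nabla g\rangle_{\Euc}\ge 0$ a.e., multiplied by $p'(f-g)\ge 0$. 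Either repair works; as written, your step from the Markov property to complete accretivity is unjustified.
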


\begin{proof}
  Accordingly to \cite{MR1164641} (see also \cite{CoulHau2017}), the
  statement of this proposition holds provided the $p$-Laplace operator
  $\Delta_p^{\!\bm{A}}$ satisfies
  \begin{equation}
    \label{eq:complete-accretive}
     \int_{\Omega}p(f-g)(v-\hat{v})\,\dx\ge 0
   \end{equation}
   for every $p\in P_{0}$ and every $(f,v)$,
   $(g,\hat{v})\in \Delta_p^{\!\bm{A}}$, where $P_{0}$ is the set of
   (smooth) truncator functions given by
   \begin{displaymath}
     P_{0}=\Big\{ p\in C^{\infty}(\R)\,\Big\vert\;0\le p'\le
     1,\;\supp(p')\text{ compact, }0\notin \supp(p)\,\Big\}. 
   \end{displaymath}
   To see that \eqref{eq:complete-accretive} holds, let $u$,
   $\hat{u}\in D(\Delta_p^{\!\bm{A}})$ and $p\in P_0$. Then,
   \allowdisplaybreaks
   \begin{align*}
       &\int_{\Omega}p(f-g)(\Delta_p^{\!\bm{A}}f-\Delta_p^{\!\bm{A}}g)\,\dx\\
       &\quad =
       \int_{\Omega}
      \langle\abs{\nabla_{\!\bm{B}}f}^{p-2}_{\Euc}
      \nabla_{\!\bm{B}}f
      -\abs{\nabla_{\!\bm{B}}g}^{p-2}_{\Euc}
      \nabla_{\!\bm{B}}g,\bm{B}\nabla p(f-g)\rangle_{\Euc}
       \,\dx\\
     &\quad = 
      \int_{\Omega}
      \langle
      \abs{\nabla_{\!\bm{B}}f}^{p-2}_{\Euc}
      \nabla_{\!\bm{B}}f
      -\abs{\nabla_{\!\bm{B}}g}^{p-2}_{\Euc}
      \nabla_{\!\bm{B}}g,\bm{B}\nabla(f-g)\rangle_{\Euc}\, p'(f-g)
       \,\dx\\
       &\quad \ge0,
   \end{align*}
   since $p'\ge 0$ and by the convexity of $x\mapsto \abs{x}^p$, one has that
   \begin{align*}
       &\langle
      \abs{\nabla_{\!\bm{B}}f}^{p-2}_{\Euc}
      \nabla_{\!\bm{B}}f
      -\abs{\nabla_{\!\bm{B}}g}^{p-2}_{\Euc}
      \nabla_{\!\bm{B}}g,\bm{B}\nabla(f-g)\rangle_{\Euc}\\
      &\qquad=
      \langle
      \abs{\bm{B}\nabla f}^{p-2}_{\Euc}
      \bm{B}\nabla f
      -\abs{\bm{B}\nabla g}^{p-2}_{\Euc}
      \bm{B}\nabla g,\bm{B}\nabla f-\bm{B}\nabla g)\rangle_{\Euc}\ge 0
   \end{align*}
   a.e. on $\R^d$. An alternative proof of the statement of this
   proposition can be obtained by verifying the condition on the
   function $\E$ given in \cite[Theorem 3.9]{MR3465809} (see also
   \cite[Theorem 3.10]{MR3465809}).
\end{proof}

It is important to mention that the semigroup
$\{e^{t\Delta_p^{\!\bm{A}}}\}_{t\ge 0}$ admits further regularization
properties; for instance, for every $f\in L^q(\Omega)$,
$1\le q\le \infty$, the time-derivative
$\tfrac{\td}{\dt}e^{t\Delta_p^{\!\bm{A}}}f$ global
$L^q(\Omega)$-estimates implying an immediate regularization effect and
decay in time (cf. \cite{MR648452}, and see also
also~\cite{MR4031770,MR4200826}). But for the aims of this paper, these
estimates are not relevant.


\section{Nash and Sobolev inequalities in Gru\v{s}in 
and spaces with monomial weights}
\label{sec:Lq-Lr-regularization_effect}

Throughout this section, we focus on the Gru\v{s}in space setting
$(\R^d,\bm{A})$ from Example~\ref{ex:1}~\ref{ex:gen-grusin} and the
structure $(\R^d,\bm{A})$ inducing spaces with monomial weights from
Example~\ref{ex:1}~\ref{ex:monomial-weight}. 

\subsection{Sub-elliptic estimates in Gru\v{s}in spaces}\label{subsec:sub-elliptic-Grushin}

For integers $d$, $m$, $n\ge 0$, and for $0\le \alpha<2$, let
$d=n+m\ge 1$, $\beta_1$, \dots, $\beta_m\ge 0$, and $\bm{A}$ be the
$d\times d$-diagonal matrix given by
\eqref{eq:genGrushin-Matrix}.\medskip

We begin by deriving a Nash inequality associated with the degenerate
matrix $\bm{A}$. Thereby, we follow some ideas from \cite{RSik}. For the
Nash inequality associated with the generalized Gru\v{s}in matrix
$\bm{A}$ given by \eqref{eq:genGrushin-Matrix}, the new dimension
\begin{equation}
    \label{eq:dimension-Grushin}
    D:=\left(n+ m(1-\alpha/2) + \frac{1}{2}\sum_{k=1}^m \beta_k \right)(1-\alpha/2)^{-1} 
\end{equation}
is crucial. Note that $D\ge n+m$.\medskip 

\begin{thm}
    \label{thm:Grushin-Nash-inequality}
    For integers $d$, $m$, $n\ge 0$, and for $0\le \alpha<2$, let
    $d=n+m\ge 1$, $\beta_1$, \dots, $\beta_m\ge 0$, $D$ be given by
    \eqref{eq:dimension-Grushin}, and $\bm{A}$ the generalized
    Gru\v{s}in matrix given by \eqref{eq:genGrushin-Matrix}. Then, there
    is a $C>0$ such that the following Nash-inequality
    \begin{equation}
       \label{eq:Grushin-Nash-inequality}
        \norm{f}_2\le C\,        
        \norm{\abs{\nabla_{\!\bm{A}}f}_{\bm{A}}}_2^{\frac{D}{D+2}}\,
        \norm{f}_{1}^{1-\frac{D}{D+2}}
    \end{equation}
    holds for every $f\in C_{c}^{1}(\R^d)$.
\end{thm}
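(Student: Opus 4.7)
The plan is to deduce the Nash inequality \eqref{eq:Grushin-Nash-inequality} from a Sobolev-type inequality via H\"older interpolation. As motivation for the exponent $D$, I would first identify the correct scaling. The family of anisotropic dilations
\begin{displaymath}
 \delta_\lambda(x,y_1,\dots,y_m) := \bigl(\lambda x,\, \lambda^{(2-\alpha+\beta_1)/2}y_1,\,\dots,\, \lambda^{(2-\alpha+\beta_m)/2}y_m\bigr)
\end{displaymath}
is chosen precisely so that every entry of $\nabla_{\!\bm{A}} f$ rescales by the same pointwise factor $\lambda^{1-\alpha/2}$. A direct computation shows $\norm{\abs{\nabla_{\!\bm{A}}(f\circ\delta_\lambda)}_{\bm{A}}}_2^2 = \lambda^{(2-D)(1-\alpha/2)}\,\norm{\abs{\nabla_{\!\bm{A}}f}_{\bm{A}}}_2^2$ and $\norm{f\circ\delta_\lambda}_q = \lambda^{-D(1-\alpha/2)/q}\norm{f}_q$, and one checks that both sides of \eqref{eq:Grushin-Nash-inequality} then transform by the same factor $\lambda^{-D(1-\alpha/2)/2}$. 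This confirms that $D$ in \eqref{eq:dimension-Grushin} is the correct effective dimension under which Nash is scale-invariant.

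The next step is to establish the Sobolev inequality
$\norm{f}_{p^\ast} \le C\, \norm{\abs{\nabla_{\!\bm{A}}f}_{\bm{A}}}_2$
with $p^\ast = 2D/(D-2)$ for $f\in C^1_c(\R^d)$ (implicitly requiring $D>2$, which is ensured by the assumptions on $n, m, \alpha$ and the $\beta_j$). Granted this, the passage from Sobolev to Nash is standard. Starting from $\norm{f}_2^2 = \langle f,f\rangle_{L^2} \le \norm{f}_{p^\ast}\norm{f}_{q}$ with H\"older conjugate $q=2D/(D+2)$, and applying the elementary interpolation $\norm{f}_q \le \norm{f}_1^{2/D}\,\norm{f}_2^{(D-2)/D}$, one arrives at
$\norm{f}_2^{(D+2)/D} \le C\, \norm{\abs{\nabla_{\!\bm{A}}f}_{\bm{A}}}_2\,\norm{f}_1^{2/D}$,
which is exactly \eqref{eq:Grushin-Nash-inequality} with the stated exponents $D/(D+2)$ and $2/(D+2)$.

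The main obstacle is therefore the Sobolev inequality itself. Following the linear case strategy of \cite{RSik}, I would apply the partial Fourier transform $\mathcal{F}_y$ in the $y$-variable; by Plancherel,
\begin{displaymath}
 \norm{\abs{\nabla_{\!\bm{A}}f}_{\bm{A}}}_2^2
 = \int_{\R^m}\!\!\int_{\R^n}\!\Big[\abs{x}_n^\alpha\abs{\nabla_x\hat f(x,\eta)}^2 + 4\pi^2 V_\eta(x)\abs{\hat f(x,\eta)}^2\Big]\,\dx\,\mathrm{d}\eta,
\end{displaymath}
with $V_\eta(x)=\sum_{j=1}^m\abs{x}_n^{\beta_j}\eta_j^2$. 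For each fixed $\eta$, the inner quadratic form is that of a degenerate Schr\"odinger-type operator $L_\eta$ on $\R^n$, and the task reduces to proving $L^{p^\ast}$-estimates for $L_\eta$ with sharp tracking of the $\eta$-dependence; a Minkowski/Plancherel reassembly over $\eta$ then gives the global Sobolev inequality. The technical heart of the argument — and the main obstacle — is that when the $\beta_j$ differ there is no single homogeneity of $V_\eta$ in $\eta$, so one must decompose $\R^m$ into regions in which a single term $\abs{x}_n^{\beta_j}\eta_j^2$ dominates, carry out the scale-balancing between $x$ and $\eta_j$ in each region, and then verify that the exponents accumulated through the decomposition combine precisely to yield the dimension $D$ of \eqref{eq:dimension-Grushin}.
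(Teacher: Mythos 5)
Your scaling computation correctly identifies $D$ as the exponent under which \eqref{eq:Grushin-Nash-inequality} is invariant, and the passage from a Sobolev inequality $\norm{f}_{2D/(D-2)}\lesssim\norm{\abs{\nabla_{\!\bm{A}}f}_{\bm{A}}}_2$ to the Nash inequality via H\"older and interpolation is standard and correct. But there is a genuine gap: the Sobolev inequality, which you yourself call ``the main obstacle'' and ``the technical heart,'' is never established. You sketch a program (partial Fourier transform in $y$, uniform-in-$\eta$ $L^{p^\ast}$-estimates for the family of degenerate Schr\"odinger operators $L_\eta$, a decomposition of the $\eta$-space when the $\beta_j$ differ) but carry out none of its steps, and those uniform estimates are not routine --- as written, the theorem has been reduced to an unproved statement at least as hard as the theorem itself. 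A second, structural problem: your route requires $D>2$, whereas the theorem asserts the Nash inequality for all admissible parameters, and $D\le 2$ does occur (for instance $n=1$, $m=0$, $\alpha=0$ gives $D=1$); the Sobolev-to-Nash reduction simply does not apply in that range.

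The paper avoids both issues by proving Nash directly rather than through Sobolev. It splits the form as $h=h_0+\sum_{j=1}^m h_j$, invokes the operator inequalities $h_0\ge c\,L_x^{1-\alpha/2}$ and $h_0+h_j\ge c\,L_{y_j}^{\gamma_j}$ with $\gamma_j=\frac{1-\alpha/2}{1+(\beta_j-\alpha)/2}$ from \cite[Proposition~3.5]{RSik} to dominate $h$ from below by the Fourier multiplier $Q(\xi_x,\xi_y)=\abs{\xi_x}^{2(1-\alpha/2)}+\sum_j\abs{\xi_j}^{2\gamma_j}$, computes the sublevel-set volume $V_Q(r)\sim r^{D}$, and concludes from the abstract Nash lemma (Lemma~\ref{lem6.2}) by optimizing in $r$. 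Note also that in the paper's logic the Sobolev inequality \eqref{Sobolev:genGrusin} is itself deduced \emph{from} the Nash inequality (via heat-kernel bounds and Riesz-transform estimates), so deriving Nash from Sobolev inverts the order of deduction; to keep your architecture you would have to supply an independent proof of the Sobolev inequality, which your sketch does not provide.
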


For the proof of our (degenerate) Nash inequality, we need to introduce
some further notions. Given a positive real-valued function $Q$ on
$\R^d$, let $Q(i\nabla_{\Euc})$ be the $Q$ the positive self-adjoint
operator on $L^2(\R^d)$ defined by the theory of Fourier multipliers
$Q(i\nabla_{\Euc})=\mathcal{F}^{-1}Q(\cdot)\mathcal{F}$. Here, we denote
by $\mathcal{F}[\varphi]=\hat{\varphi}$ the Fourier transform of a
function $\varphi\in L^2(\R^d)$, and by $\mathcal{F}^{-1}$ its
inverse. Further, let $q : D(q)\times D(q)\to \R$ denote the closed
linear form associated with $Q$; that is,
\begin{equation}\label{qq}
    q(\varphi):=\int_{\R^d}Q(\xi)\,
    \abs{\hat{\varphi}(\xi)}^2\,\td\xi
\end{equation}
for every $\varphi\in D(q)$, where the domain $D(q)$ of $q$ is linear
subspace of $L^2(\R^d)$ such that the integral of $q$ is
finite. Finally, for given $Q$ and $r>0$, let
\begin{displaymath}
    V_{Q}(r):=\labs{\Big\{\xi\in\R^{n+m}\,\Big\vert\,Q(\xi)<r^2\Big\}}.
\end{displaymath}
Recall that 
c\begin{lem}\label{lem6.2} 
If $h\ge q$ then 
 $$\norm{f}_2^2\le C \left(r^{-2} h(f) + V_Q(r)\norm{f}_1^2   \right)
 =C\left(r^{-2}\norm{\abs{\nabla_{\!\bm{A}}f}_{\bm{A}}}_2^2+ V_Q(r)\norm{f}_1^2   \right).
 $$
\end{lem}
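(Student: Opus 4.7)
The approach is the classical Nash trick carried out on the Fourier side, adapted to the fact that the degenerate quadratic form $h$ dominates $q$ rather than being an isotropic Sobolev norm. First I would apply Plancherel's identity to write $\norm{f}_2^2=\int_{\R^d}\abs{\hat f(\xi)}^2\td\xi$ and split the integration domain according to whether $Q(\xi)<r^2$ or $Q(\xi)\ge r^2$. The crucial point is that the first region has finite Lebesgue measure, namely $V_Q(r)$, while on the second region the indicator is dominated by $r^{-2}Q(\xi)$.

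On the low-frequency piece I would use the elementary Hausdorff--Young bound $\norm{\hat f}_\infty\le\norm{f}_1$, which gives
\[
\int_{\{Q<r^2\}}\abs{\hat f(\xi)}^2\td\xi\le V_Q(r)\norm{f}_1^2.
\]
On the high-frequency piece the inequality $1\le r^{-2}Q(\xi)$ yields
\[
\int_{\{Q\ge r^2\}}\abs{\hat f(\xi)}^2\td\xi\le r^{-2}\int_{\R^d}Q(\xi)\abs{\hat f(\xi)}^2\td\xi=r^{-2}q(f)\le r^{-2}h(f),
\]
where the final step is precisely the hypothesis $h\ge q$. Summing the two contributions gives the stated inequality with constant $C=1$ (larger $C$ only matters when one later absorbs norm equivalences).

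To justify the equality on the right-hand side of the lemma, I would identify the Dirichlet form $h$ with $\norm{\abs{\nabla_{\!\bm{A}}f}_{\bm{A}}}_2^2$ via the expression \eqref{eq:Grushin-gradient-norm} and integration by parts against $-\Delta_2^{\bm{A}}$ on $C_c^1(\R^d)$, exactly as in \eqref{eq:Delta-A-2}. I expect no real obstacle: the argument is entirely elementary and linear. The only technical points are (i) ensuring that $f\in C_c^1(\R^d)$ implies $\hat f\in L^2\cap L^\infty$ and $Q\abs{\hat f}^2\in L^1$, which is immediate since $C_c^1\subset\mathcal{S}$ and $Q$ is polynomially bounded in the Grušin setting, and (ii) verifying that the definition \eqref{qq} of $q$ indeed agrees with the horizontal Dirichlet form on this test-function class, which follows from Parseval applied to each partial derivative weighted by the symbols $\abs{x}_n^{\alpha}$ and $\abs{x}_n^{\beta_j}$.
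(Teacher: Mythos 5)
Your argument is correct and is precisely the standard Nash-type frequency-splitting proof that the paper itself does not reproduce but delegates to \cite[Lemma 2.2]{RSik}: Plancherel, the trivial bound $\norm{\hat f}_{\infty}\le\norm{f}_1$ on the low-frequency set $\{Q<r^2\}$, the pointwise bound $1\le r^{-2}Q(\xi)$ on its complement, and finally the hypothesis $q\le h$, with the displayed equality being nothing more than the definition of $h$ as the Dirichlet form $\norm{\abs{\nabla_{\!\bm{A}}f}_{\bm{A}}}_2^2$. One cosmetic slip: $C^1_c(\R^d)$ is \emph{not} contained in the Schwartz class, but the integrability you invoke holds anyway, since $\hat f\in L^{\infty}$ follows from $f\in L^1$ and $Q\abs{\hat f}^2\in L^1$ follows from $q(f)\le h(f)<\infty$ for compactly supported $C^1$ functions.
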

For the proof and discussion of the above version of Nash inequality,
see e.g. \cite[Lemma 2.2]{RSik}.

With this in mind, we can now outline the proof of the Nash
inequality~\eqref{eq:Grushin-Nash-inequality}.

\begin{proof}[Proof of Theorem~\ref{thm:Grushin-Nash-inequality}]
  The proof is adapted from \cite{RSik} and requires only small minor
  changes to the argument described there.  We consider the form
  described in \eqref{eq:Grushin-gradient-norm} and set
\begin{equation*}
  h(f) = \abs{\nabla_{\!\bm{A}}f}_{\bm{A}}^2
  =|x|^{\alpha}_{n}\sum_{i=1}^n\labs{\tfrac{\partial f}{\partial x_i}}^2
  +\sum_{j=1}^{m}\abs{x}_n^{\beta_{j}}\labs{\tfrac{\partial f}{\partial y_j}}^2.
     \end{equation*}

     By \cite[Lemma 2.3]{RSik} it is enough to find $Q$ on $\R^d$, such
     that $h \ge q$ and $V_Q \le c r^D$, where $c$ is a positive
     constant and $q$ is a quadratic form associated with $Q$ in a way
     described in \eqref{qq}. We represent $h$ as
     $h=h_0 +\sum_{j=1}^{m} h_j $ where we set
     $$
     h_0(f)=|x|^{\alpha}_{n}\sum_{i=1}^n\labs{\tfrac{\partial f}{\partial x_i}}^2  \quad \mbox{and} 
     \quad h_j(f)= \abs{x}_n^{\beta_{j}}\labs{\tfrac{\partial f}{\partial y_j}}^2. $$

     It was verified in \cite[Proposition 3.5]{RSik} that for some
     constant $c>0$ any $1 \le j \le m$
$$
h_0+h_j \ge cL_{y_j}^{\gamma_j} \quad \mbox{and} 
     \quad  h_0 \ge c L_{x}^{(1-\alpha/2)}
$$
where  $\gamma_j= \frac{1-\alpha/2}{1+(\beta_j -\alpha)/2 }$,  $L_x =  -\sum_{i=1}^n\partial_{x_i}^2 $,  and $L_y= -\partial^2_{y_j}$.
It follows that for some $c>0$
$$
c^{-1}h \ge  L_{x}^{(1-\alpha/2)}+ \sum_{j=1}^{m} L_{y_j}^{\gamma_j}. 
$$
That is $h \ge CQ$ where $Q$ is defined by
$$
Q(\xi_x,\xi_y)=|\xi_x|^{2(1-\alpha/2)} +\sum_{j=1}^{m} |\xi_{_j}|^{2\gamma_j}.
$$
Set $\sigma = \sum_{j=1}^{m} \gamma_j^{-1} =r^{D}$ and  note that 
$$
 V_{Q}(r):=\labs{\Big\{\xi\in\R^d\,\Big\vert\,Q(\xi)<r^2\Big\}} \sim r^{n/(1-\alpha)} r^\sigma =r^{D}
$$
Now Theorem~\ref{thm:Grushin-Nash-inequality} follows by minimizing with respect to $r$ the inequality stated in Lemma \ref{lem6.2}.
\end{proof}

According to Proposition~\ref{prop:semigroup}, the linear operator
$-\Delta_2^{\!\bm{A}}$ introduced in~\eqref{eq:Delta-A-2} generates a
linear $C_0$-semigroup $\{e^{-t\Delta_2^{\!\bm{A}}}\}_{t\ge 0}$ acting
on $L^q(\R^d)$ for all $1\le q\le \infty$.\medskip

Using the argument described in in \cite{RSik} we can show that it
follows from Theorem~\ref{thm:Grushin-Nash-inequality} and Lemma
\ref{lem6.2} that
\begin{displaymath}
 \norm{e^{-t \Delta_2^{\!\bm{A}}}}_{1\to \infty} \lesssim\,t^{-D/2}
\end{displaymath}
see \cite[Lemma 2.5]{RSik}. By \cite[Theorem~II.4.3 \&
Remark~II.4.4]{MR1218884} (see also \cite{MR2396848}), the previous
$L^1$-$L^{\infty}$ regularization inequality is equivalent to
\begin{displaymath}
    \norm{(\Delta_2^{\!\bm{A}})^{-1/2}f}_{r} \le C\,\norm{f}_p
\end{displaymath}
for every $f\in C^{\infty}_c(\R^d)$ and $1<p<r<\infty$ provided
$1/p-1/r=1/D$. Therefore, for such $p$ and $r$, one has that
\begin{equation}
   \label{eq:Sobolev-for-fractional-Delta-A}
   \norm{f}_r \lesssim\, \norm{(\Delta_2^{\!\bm{A}})^{1/2}f}_p
\end{equation}
for every $f\in C^{\infty}_c(\R^d)$.  In \cite[Section 8.1]{RSik}, it
was proved that the \emph{Riesz transform}
$R:=\nabla (\Delta_2^{\!\bm{A}})^{-1/2}$ is bounded on $L^{q}(\R^d)$ for
all $1<q\le 2$. Therefore, for every $1<q\le 2$, there is a $C_q>0$ such
that
\begin{displaymath}
 \norm{|\nabla_{\!\bm{A}} f|_{\bm{A}}}_{q} \le\,C_q\, 
   \norm{(\Delta_2^{\!\bm{A}})^{1/2}f}_{q}   
\end{displaymath}
for every $f\in C^{\infty}_c(\R^d)$. Now, by
Proposition~\ref{prop:bdd-Riesz-transform}, 
for every $q'\ge 2$, there is a constant
$C_{q'}>0$ such that
\begin{equation}
    \label{eq:inverse-Riesz}
    \norm{(\Delta_2^{\!\bm{A}})^{1/2}f}_{q'} \le\,C_{q'}\, 
    \norm{\abs{\nabla_{\!\bm{A}}f}_{\bm{A}}}_{q'}
\end{equation}
for every $f\in C^{\infty}_c(\R^d)$. If we now
apply~\eqref{eq:inverse-Riesz} to
\eqref{eq:Sobolev-for-fractional-Delta-A} for
$p=q'\ge 2$ then we obtain the following \emph{Sobolev
  inequality} for the degenerate matrix $\bm{A}$ given
by~\eqref{eq:genGrushin-Matrix}.

\begin{thm}\label{thm:Sobolev-genGrusin}
  For integers $d$, $m$, $n\ge 0$, and for $0\le \alpha<2$, let
  $d=n+m\ge 1$, $\beta_1$, \dots, $\beta_m\ge 0$, $D$ be given by
  \eqref{eq:dimension-Grushin}, and $\bm{A}$ the generalized Gru\v{s}in
  matrix given by \eqref{eq:genGrushin-Matrix}. Then, for $2\le p<D$,
  one has that
 \begin{equation}
    \label{Sobolev:genGrusin}
    \norm{f}_{\frac{pD}{D-p}}\lesssim\, \norm{\abs{\nabla_{\!\bm{A}}f}_{\bm{A}}}_{p}
\end{equation}
for every $f\in C^{\infty}_c(\R^d)$. 
\end{thm}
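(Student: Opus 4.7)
The plan is to assemble Theorem~\ref{thm:Sobolev-genGrusin} by composing two inequalities that have already been prepared in the preceding discussion. The first ingredient is the fractional Sobolev inequality~\eqref{eq:Sobolev-for-fractional-Delta-A}, namely
\begin{displaymath}
  \norm{f}_r \lesssim \norm{(\Delta_2^{\!\bm{A}})^{1/2} f}_p
  \qquad\text{whenever } 1<p<r<\infty \text{ and } \tfrac{1}{p}-\tfrac{1}{r}=\tfrac{1}{D},
\end{displaymath}
which was derived from the Nash inequality of Theorem~\ref{thm:Grushin-Nash-inequality} via Lemma~\ref{lem6.2}, the resulting $L^1$--$L^\infty$ heat kernel decay $\norm{e^{-t\Delta_2^{\!\bm{A}}}}_{1\to\infty}\lesssim t^{-D/2}$, and the Varopoulos-type characterization \cite[Theorem~II.4.3]{MR1218884}. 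The second ingredient is the reverse Riesz transform bound~\eqref{eq:inverse-Riesz},
\begin{displaymath}
  \norm{(\Delta_2^{\!\bm{A}})^{1/2} f}_{q'} \lesssim \norm{\abs{\nabla_{\!\bm{A}} f}_{\bm{A}}}_{q'}
  \qquad \text{for every } q'\ge 2,
\end{displaymath}
valid for all $f\in C^\infty_c(\R^d)$.

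Given these two ingredients, the proof itself is immediate. For $2\le p<D$, define the Sobolev exponent $r:=pD/(D-p)$, which satisfies $1/p-1/r=1/D$ and lies in $(p,\infty)$. First I would invoke~\eqref{eq:inverse-Riesz} with $q'=p\ge 2$ to bound $\norm{(\Delta_2^{\!\bm{A}})^{1/2} f}_p$ by $\norm{\abs{\nabla_{\!\bm{A}} f}_{\bm{A}}}_p$, and then chain it with~\eqref{eq:Sobolev-for-fractional-Delta-A} applied to the pair $(p,r)$ to obtain
\begin{displaymath}
  \norm{f}_{\frac{pD}{D-p}} \lesssim \norm{(\Delta_2^{\!\bm{A}})^{1/2} f}_p \lesssim \norm{\abs{\nabla_{\!\bm{A}} f}_{\bm{A}}}_p,
\end{displaymath}
which is exactly~\eqref{Sobolev:genGrusin}.

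The main conceptual obstacle — already resolved earlier — is producing the reverse Riesz bound~\eqref{eq:inverse-Riesz}. That required two non-trivial inputs: the $L^q$-boundedness, for $1<q\le 2$, of the direct Riesz transform $R=\nabla_{\!\bm{A}}(\Delta_2^{\!\bm{A}})^{-1/2}$ associated with the degenerate matrix $\bm{A}$ of~\eqref{eq:genGrushin-Matrix}, established in \cite[Section~8.1]{RSik}, together with the abstract duality argument of Proposition~\ref{prop:bdd-Riesz-transform}, which uses the self-adjointness of $(\Delta_2^{\!\bm{A}})^{1/2}$ and the density of its range in $L^p$. Once those inputs are granted, the composition above is essentially formal, and the only quantitative points to verify are that $p\ge 2$ permits the duality step, while $p<D$ keeps the Sobolev exponent finite and positive — both encoded in the hypothesis $2\le p<D$ of the theorem.
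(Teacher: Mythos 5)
Your proposal is correct and follows exactly the paper's own route: the Sobolev inequality is obtained by chaining the fractional Sobolev estimate~\eqref{eq:Sobolev-for-fractional-Delta-A} (derived from the Nash inequality, the heat-kernel bound $\norm{e^{-t\Delta_2^{\!\bm{A}}}}_{1\to\infty}\lesssim t^{-D/2}$, and the Varopoulos characterization) with the reverse Riesz bound~\eqref{eq:inverse-Riesz} at exponent $q'=p\ge 2$, the latter coming from the $L^q$-boundedness of the Riesz transform for $1<q\le 2$ in \cite{RSik} and the duality argument of Proposition~\ref{prop:bdd-Riesz-transform}. No gaps.
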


\begin{rem}
  a.) It is worth noting that Robinson and the second author considered
  the operator $ \Delta_2= \Delta_{\R^n} + |x|^{2N}\Delta_{\R^m}$
  corresponding to the matrix $A$ with $\alpha=0$ and $\beta_j=2N$ for
  some $N\in \N$. b.) In \cite{Rsik2} they established boundedness of
  the corresponding Riesz Transform for all $1<p< \infty$. This implies
  Sobolev inequality~\eqref{Sobolev:genGrusin} for all
  $1<p<D$. c.) Furthermore, an alternative proof of
  Theorem~\ref{thm:Sobolev-genGrusin} avoiding the boundedness property
  of the Riesz transform $R$ can be constructed by using the theory
  developed in \cite{MR1386760}.
\end{rem}


\subsection{Sobolev inequality in spaces of monomial weights}\label{subsec:sub-elliptic-Grushin-monomial}

Next, we recall the following Sobolev
inequality due to Cabr\'e \& Ros-Oton~\cite[Corollary~3.5]{MR3097258}.

\begin{thm}
 \label{thm:Sobolev-genGrusin-monomial}
  For $d\in \N$ and for $\alpha_{1}$, \dots, $\alpha_d\in [0,2)$, let
  \begin{equation}
   \label{eq:dimension-Grushin-monomial}
      D=d+\sum_{i=1}^{d}\frac{\alpha_i}{2-\alpha_i}
  \end{equation}
  and $1\le p<D$. Further, let $(\R^d,\bm{A})$
  be the Riemannian structure given by \eqref{eq:monomilal}. Then,
    \begin{equation}
    \label{Sobolev:genGrusin-monomial}
    \norm{f}_{\frac{pD}{D-p}}\lesssim\, \norm{\abs{\nabla_{\!\bm{A}}f}_{\bm{A}}}_{p}
\end{equation}
for every $f\in C^{\infty}_c(\R^d)$. 
\end{thm}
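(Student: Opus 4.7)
The plan is to reduce the stated weighted Sobolev inequality to the unweighted/Cabré--Ros-Oton weighted Sobolev inequality by exploiting the change of variables computation carried out in the example immediately preceding the statement (leading to \eqref{eq:eqiv-E-with-weigths}). In that computation the authors already showed that under the substitution $y_i = x_i^{n_i}$ with $n_i := 1/(1-\alpha_i/2) = 2/(2-\alpha_i)$ the degenerate gradient norm is equivalent to a monomial-weighted Euclidean gradient; so the whole inequality \eqref{Sobolev:genGrusin-monomial} transforms into a weighted Sobolev inequality in the new variables, which is precisely \cite[Corollary~3.5]{MR3097258}.

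The concrete steps are as follows. First, given $f\in C^{\infty}_c(\R^d)$ define $\tilde f(x):=f(x_1^{n_1},\dots,x_d^{n_d})$. The equivalence \eqref{eq:eqiv-E-with-weigths} (established earlier from the explicit length formula \eqref{eq:ell2-norm-of-A-monomial} together with the substitution $y_i=x_i^{n_i}$) yields
\begin{equation*}
    \int_{\R^d} \abs{\nabla_{\!\bm{A}}f(y)}_{\bm{A}}^p\,\dy
    \;\sim\; \int_{\R^d}\abs{\nabla \tilde f(x)}_{\Euc}^{p}\,\omega(x)\,\dx,
    \qquad \omega(x)=\prod_{i=1}^d \abs{x_i}^{n_i-1}.
\end{equation*}
Second, applying the same substitution to the left-hand side of \eqref{Sobolev:genGrusin-monomial}, with $q:=pD/(D-p)$, gives
\begin{equation*}
    \int_{\R^d}\abs{f(y)}^q\,\dy
    \;=\;\int_{\R^d}\abs{\tilde f(x)}^q\prod_{i=1}^d n_i\,\abs{x_i}^{n_i-1}\,\dx
    \;\sim\;\int_{\R^d}\abs{\tilde f(x)}^q\,\omega(x)\,\dx.
\end{equation*}

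Third, I would observe that the exponent $D$ in \eqref{eq:dimension-Grushin-monomial} agrees with the \emph{effective dimension} associated with the monomial weight $\omega$, because
\begin{equation*}
    d+\sum_{i=1}^{d}(n_i-1) = d+\sum_{i=1}^{d}\frac{\alpha_i}{2-\alpha_i} = D.
\end{equation*}
Hence the transformed inequality
\begin{equation*}
    \lnorm{\tilde f}_{L^{\frac{pD}{D-p}}(\omega\,\dx)} \lesssim \lnorm{\abs{\nabla\tilde f}_{\Euc}}_{L^p(\omega\,\dx)}
\end{equation*}
is exactly the Sobolev inequality with monomial weights of Cabré and Ros-Oton \cite[Corollary~3.5]{MR3097258}, applied to $\tilde f$. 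Combining the two equivalences with this inequality gives the claim.

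The main (and essentially only) technical obstacle is verifying that one may legitimately apply the Cabré--Ros-Oton inequality to $\tilde f$: the map $x_i\mapsto x_i^{n_i}$ (interpreted as $\sign(x_i)\abs{x_i}^{n_i}$) is a bi-Lipschitz-like homeomorphism of $\R$ but is not smooth across $\{x_i=0\}$ when $n_i\notin\N$, so $\tilde f$ is in general only in a weighted Sobolev space rather than in $C^{\infty}_c(\R^d)$. This is handled by a standard density/approximation argument: Cabré--Ros-Oton's inequality is proved for all functions in the natural weighted Sobolev space (obtained by completion), and $\tilde f$ lies in that space because the weight $\omega$ is locally integrable and the substitution transports $C^{\infty}_c$ regularity into that space. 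All other steps are routine applications of Fubini and the substitution formula.
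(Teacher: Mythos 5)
Your proposal is correct and follows essentially the same route as the paper: both use the substitution $y_i=x_i^{n_i}$ behind \eqref{eq:eqiv-E-with-weigths} to transplant the inequality into the monomial-weight setting of Cabr\'e and Ros-Oton and then invoke their weighted Sobolev inequality (the paper also notes the shortcut of applying \cite[(3.8) in Corollary~3.5]{MR3097258} directly after comparing the $\ell_p$- and Euclidean norms). Your version is in fact more carefully written, since you also transform the left-hand side, verify that the effective dimension $d+\sum_i(n_i-1)$ equals $D$ (i.e.\ the correct weight exponents are $A_i=n_i-1$, not $n_i$ as the paper's text suggests), and address the regularity of $\tilde f$ across $\{x_i=0\}$.
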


Before we outline the proof of this theorem, we want to fix two comments.

\begin{rem}
  a.) For $2\le p<D$, one would obtain the Sobolev
  inequality~\eqref{Sobolev:genGrusin-monomial} by using the the
  methodology for proving Theorem~\ref{thm:Sobolev-genGrusin}. b.) In
  the case $m=0$, the dimension $D$ introduced in
  \eqref{eq:dimension-Grushin} coincides with $D$ in
  \eqref{eq:dimension-Grushin-monomial}. The proof is essentially the
  same as in Lemma~\ref{lem6.2}.
\end{rem}

\begin{proof}
  Due to the relation \eqref{eq:eqiv-E-with-weigths}, the Sobolev
  inequality \eqref{Sobolev:genGrusin} follows from the weighted Sobolev
  inequality \cite[Theorem~1.3]{MR3097258} with implying
  $A_i:=n_i=1/(1-\alpha_i/2)$. Alternatively, we could used the
  equivalence between the $\ell_p$- and $\Euc$-norm on $\R^d$, and
  subsequently apply the Sobolev inequality~\cite[(3.8) in
  Corollary~3.5]{MR3097258} with $\tilde{\alpha}_i=\alpha_i/2$,.
\end{proof}


\section{The regularizing effects of the semigroups}
\label{subsec:LqLr-regularization}

Due to \cite[Theorem~1.2 \& Theorem~1.4]{CoulHau2017}, we can conclude
the following $L^r$-$L^\infty$ regularization effect of the semigroup
$\{e^{-t\Delta_{p}^{\!\bm{A},D}} \}_{t\ge 0}$ generated by the Dirichlet
$p$-Laplace operator $\Delta_{p}^{\!\bm{A},D}$ from the Sobolev
inequalities~\eqref{Sobolev:genGrusin} and
\eqref{Sobolev:genGrusin-monomial} in the previous section. Throughout
this section, $\Omega$ denotes an open subset of $\R^d$.

\subsection{The regularizing effects of the semigroup in Gru\v{s}in spaces}
\label{subsec:regularity-in Grushin} 
First, we outline the regularization effects of the semigroup
$\{e^{-t\Delta_{p}^{\!\bm{A},D}} \}_{t\ge 0}$ generated by the Dirichlet
$p$-Laplace operator $\Delta_{p}^{\!\bm{A},D}$, when the
Sub-Rie\-mannian structure $(\R^d,\bm{A})$ induces the generalized
Gru\v{s}in space.

 \begin{thm}\label{thm65}
   For integers $d$, $m$, $n\ge 0$, and for $0\le \alpha<2$, let
   $d=n+m\ge 1$, $\beta_1$, \dots, $\beta_m\ge 0$, and $\bm{A}$ be given
   by \eqref{eq:genGrushin-Matrix}. Further, suppose $D$ is given by
   \eqref{eq:dimension-Grushin} and $2\le p<D$. Then, for every
   $1\le q\le \frac{Dp}{D-p}$, the semigroup
   $\{e^{-t\Delta_{p}^{\!\bm{A}}}\}_{t\ge 0}$ satisfies
	\begin{equation}
        \label{eq:L1-Linfty}
    	 \norm{e^{-t\Delta_{p}^{\!\bm{A}}}f-e^{-t\Delta_{p}^{\!
      \bm{A}}}g}_{\infty} \lesssim t^{-\delta_r}\,
      \norm{f-g}_q^{\gamma_q}    
	\end{equation}
	for every $t>0$, $f$, $g\in L^q(\Omega)$ with the exponents
    \begin{equation}
      \label{eq:exp-part1}
       \delta_{q}=\frac{\delta^{\ast}}{1-\gamma^{\ast}\left(1-\frac{q(D-p)}{Dp} \right)},\qquad
        \gamma_{q}=\frac{\gamma^{\ast}\,q(D-p)}{Dp
        \left(1-\gamma^{\ast}\left(1-\frac{q (D-p)}{Dp}
        \right)\right)},
    \end{equation}
    where
    \begin{equation}
      \label{eq:exp-part2}
     \delta^{\ast}=\frac{D-p}{p^2+(D-p)(p-2)},\qquad
      \gamma^{\ast}=\frac{p^2}{p^2+(D-p)(p-2)}.
    \end{equation}
\end{thm}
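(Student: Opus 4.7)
The plan is to recognise this theorem as a direct consequence of the abstract regularization machinery developed by Coulhon and the first author in \cite{CoulHau2017}, applied to the specific setting of the generalised Gru\v{s}in $p$-Laplace operator. The two hypotheses needed for that machinery are: a Sobolev embedding for the underlying Dirichlet energy, and good accretivity/contractivity properties for the associated nonlinear semigroup. Both are available in our setting, so the proof will consist of verifying the hypotheses and then invoking \cite[Theorems~1.2 \& 1.4]{CoulHau2017} with the precise parameters.

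First, I would extend the Sobolev inequality \eqref{Sobolev:genGrusin} from $C_c^\infty(\R^d)$ to the form domain $W^{1,(2,p)}_{\!\bm{A},0}(\Omega)$. Since any $f\in W^{1,(2,p)}_{\!\bm{A},0}(\Omega)$ is the limit (in the form norm) of test functions in $C_c^\infty(\Omega)$, and since zero-extension to $\R^d$ preserves both $\|f\|_{p^{\ast}}$ and $\|{\abs{\nabla_{\!\bm{A}}f}_{\bm{A}}}\|_{p}$, Theorem~\ref{thm:Sobolev-genGrusin} yields
\begin{displaymath}
  \norm{f}_{p^{\ast}}\lesssim\, \norm{\abs{\nabla_{\!\bm{A}}f}_{\bm{A}}}_{p}
  \qquad\text{for every }f\in W^{1,(2,p)}_{\!\bm{A},0}(\Omega),
\end{displaymath}
with $p^{\ast}:=Dp/(D-p)$. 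This is precisely the hypothesis on the Dirichlet energy $\E$ required in \cite[Theorem~1.2]{CoulHau2017} (the Sobolev inequality with exponent controlled by the intrinsic dimension $D$).

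Next, I would verify the semigroup-theoretic prerequisites. Proposition~\ref{prop:semigroup} shows that $\{e^{-t\Delta_{p}^{\!\bm{A},D}}\}_{t\ge 0}$ extends from $L^2$ to a family of order-preserving contractions on every $L^{q}(\Omega)$, $1\le q\le \infty$, and that the difference $e^{-t\Delta_p^{\!\bm{A},D}}f-e^{-t\Delta_p^{\!\bm{A},D}}g$ decays (in $L^q$-norm) in $t$. Combined with the fact that $\E$ defined by \eqref{eq:3} is $p$-homogeneous, proper, convex and lower semicontinuous (Propositions~\ref{prop:2}--\ref{prop:2bis}), this supplies the complete-accretivity plus homogeneity framework that underpins \cite[Theorems~1.2 \& 1.4]{CoulHau2017}.

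With both hypotheses in place, the abstract result of \cite{CoulHau2017} applies: Sobolev at exponent $p^{\ast}$ yields a base $L^{p^{\ast}}$-$L^{\infty}$ regularisation of the form
\begin{displaymath}
   \norm{e^{-t\Delta_p^{\!\bm{A}}}f-e^{-t\Delta_p^{\!\bm{A}}}g}_{\infty}
   \lesssim t^{-\delta^{\ast}}\norm{f-g}_{p^{\ast}}^{\gamma^{\ast}}
\end{displaymath}
with the exponents $\delta^{\ast}$, $\gamma^{\ast}$ of \eqref{eq:exp-part2}; the values of these exponents are dictated by the scaling of the $p$-Laplacian and the Sobolev exponent $p^{\ast}$, and match those obtained in \cite[Theorem~1.1]{CoulHau2017}. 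To pass from $p^{\ast}$ to an arbitrary $q\in[1,p^{\ast}]$, I would compose this base estimate with the $L^q$-contractivity of Proposition~\ref{prop:semigroup}, interpolate $L^{p^{\ast}}$ between $L^q$ and $L^{\infty}$ by H\"older with parameter $\theta=q/p^{\ast}=q(D-p)/(Dp)$, and then solve the resulting self-bounding inequality (substituting $t\mapsto t/2$ on one side). This self-bounding step is precisely what produces the fraction $1-\gamma^{\ast}(1-\theta)$ in the denominator of \eqref{eq:exp-part1}, giving the stated $\delta_{q}$ and $\gamma_{q}$; the consistency check at $q=p^{\ast}$ ($\theta=1$) recovers $\delta_{p^{\ast}}=\delta^{\ast}$, $\gamma_{p^{\ast}}=\gamma^{\ast}$.

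The main obstacle, and the only part that is not bookkeeping, is confirming that the hypotheses of \cite[Theorems~1.2 \& 1.4]{CoulHau2017} really are satisfied in the degenerate Gru\v{s}in setting: specifically, that the intrinsic Sobolev exponent is $p^{\ast}=Dp/(D-p)$ with $D$ from \eqref{eq:dimension-Grushin} (rather than the naively expected $dp/(d-p)$), and that the complete-accretivity arguments of Proposition~\ref{prop:semigroup} cover all truncator functions $p\in P_0$ uniformly in the degeneracy locus $\bigcup K_i$. Both points are handled above, so the proof reduces to a direct citation once these are spelled out.
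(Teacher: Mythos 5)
Your proposal matches the paper's approach exactly: the paper derives Theorem~\ref{thm65} by citing \cite[Theorems~1.2 \& 1.4]{CoulHau2017} with the Sobolev inequality \eqref{Sobolev:genGrusin} of Theorem~\ref{thm:Sobolev-genGrusin} and the contractivity/accretivity properties of Proposition~\ref{prop:semigroup} as the only inputs, which is precisely what you do. Your additional bookkeeping (density extension of the Sobolev inequality to $W^{1,(2,p)}_{\!\bm{A},0}(\Omega)$, and the extrapolation from $q=p^{\ast}$ to general $q$ producing the denominators $1-\gamma^{\ast}(1-q(D-p)/(Dp))$ in \eqref{eq:exp-part1}) is consistent and correctly reproduces the stated exponents, so the proof is sound.
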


\begin{rem}
  It worth noting that the $L^1$-$L^\infty$ regularity
  estimates~\eqref{eq:L1-Linfty} obtained here are consistent with the
  case $p=2$ and partially generalize the ones obtained in \cite[Lemma
  3.2 and Proposition 3.1]{RSik}. It would be interesting to know the
  $L^1$-$L^\infty$ regularity estimates~\eqref{eq:L1-Linfty} in the case
  $1<p<2$ and $p\ge D$.
\end{rem}

\subsection{The regularizing effects of the semigroup in spaces with
  monomial weights}
\label{subsec:regularity-in-spaces-with-monomial-weights} 
Next, we state the regularization effects of the semigroup
$\{e^{-t\Delta_{p}^{\!\bm{A},D}}\}_{t\ge 0}$ generated by the Dirichlet
$p$-Laplace operator $\Delta_{p}^{\!\bm{A},D}$, when the Sub-Riemannian
structure $(\R^d,\bm{A})$ induces spaces with monomial weights.

 \begin{thm}\label{thm66-monomial}
   For $d\in \N$ and for $\alpha_{1}$, \dots, $\alpha_d\in [0,2)$, 
   let $\bm{A}$ be the matrix from~\eqref{eq:monomilal}, and $D$
   be given by \eqref{eq:dimension-Grushin-monomial}. Then, the
   following statements hold.
   \begin{enumerate}
    \item If $1<p\le 2D/(D+2)$, then for every $q_{0}\ge p$ satisfying
        \begin{displaymath}
          (D/(D-p)-1)\,q_{0}+p-2>0,
        \end{displaymath}
        and every $1\le q\le D\, q_{0}/(D-p)$ satisfying
        $q>D(2-p)/p$, the semigroup $\{e^{-t\Delta_{p}^{\!\bm{A},D}}\}_{t\ge 0}$ satisfies
        \begin{equation}
          \label{eq:Lq-Linfty-estimates-diff}
          \norm{T_{t}u}_{\infty}\lesssim\,t^{-\delta_{q}} \,\norm{u}_{q}^{\gamma_{q}}
        \end{equation}
        for every $t>0$, $u$, $\hat{u}\in L^{q}(\Sigma)$, with exponents
        \begin{equation}
          \label{eq:exponents-dirichlet-estimates}
           \delta_{q}=\frac{\delta^{\ast}_{q_0}}{1-\gamma^{\ast}\left(1-\frac{q(D-p)}{D
                  q_{0}}\right)},\quad
            \gamma_{q}=\frac{\gamma^{\ast}_{q_0}\,q(D-p)}{D
              q_{0}\left(1-\gamma^{\ast}\left(1-\frac{q (D-p)}{D
                    q_{0}}\right)\right)},
        \end{equation}
        with
        \begin{displaymath}
          \delta^{\ast}_{q_0}=\frac{D-p}{p\, q_{0}+(D-p)(p-2)},\quad
          \gamma^{\ast}_{q_0}=\frac{p\, q_{0}}{p\,
              q_{0}+(D-p)(p-2)}.
        \end{displaymath}
        
      \item If $2D/(D+2)<p\le 2D/(D+1)$, then for every
        $1\le q\le \frac{D p}{D-p}$ satisfying $q>\frac{D(2-p)}{p}$, the
        semigroup $\{e^{-t\Delta_{p}^{\!\bm{A},D}}\}_{t\ge 0}$ satisfies
        the $L^{q}$-$L^{\infty}$-regularizing
        estimate~\eqref{eq:Lq-Linfty-estimates-diff} with the exponents
        $\delta_{q}$, $\gamma_{q}$ given by \eqref{eq:exp-part1} and
        \eqref{eq:exp-part2}.

        \item If $\frac{2d}{d+1}<p<2$, then for every $1\le q\le \frac{D
            p}{D-p}$, the semigroup $\{e^{-t\Delta_{p}^{\!\bm{A},D}}\}_{t\ge 0}$
        satisfies the $L^{q}$-$L^{\infty}$-regularization
        estimate~\eqref{eq:Lq-Linfty-estimates-diff} with the exponents
        $\delta_{q}$, $\gamma_{q}$ given by \eqref{eq:exp-part1} and
        \eqref{eq:exp-part2}.
 
    \item For every $2\le p<D$ and $1\le q\le \frac{Dp}{D-p}$, the semigroup
     $\{e^{-t\Delta_{p}^{\!\bm{A},D}}\}_{t\ge 0}$ satisfies the
     $L^q$-$L^{\infty}$-regularizing estimate~\eqref{eq:L1-Linfty} for
     $1\le q\le \frac{Dp}{D-p}$  with the exponents $\delta_{q}$,
     $\gamma_{q}$ given by \eqref{eq:exp-part1} and \eqref{eq:exp-part2}.
   \end{enumerate}
\end{thm}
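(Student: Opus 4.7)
The plan is to deduce this theorem directly from the general $L^q$-$L^\infty$ smoothing framework of Coulhon and Hauer \cite{CoulHau2017}, specialized to the monomial-weight Dirichlet $p$-Laplace setting. The three hypotheses required by their Theorems~1.2 and~1.4 are all in hand: the energy functional $\E$ from \eqref{eq:3} is proper, convex, and lower semi-continuous by Proposition~\ref{prop:2}; the generated semigroup $\{e^{-t\Delta_p^{\!\bm{A},D}}\}_{t\ge 0}$ is order-preserving and completely accretive on $L^q$ for every $1\le q\le\infty$ by Proposition~\ref{prop:semigroup}; and the Sobolev inequality \eqref{Sobolev:genGrusin-monomial} of Theorem~\ref{thm:Sobolev-genGrusin-monomial}, valid for $1\le p<D$ with $D$ given by \eqref{eq:dimension-Grushin-monomial}, supplies the continuous embedding $W^{1,p}_{\bm{A},0}(\Omega)\hookrightarrow L^{Dp/(D-p)}(\Omega)$ that drives the Moser-type iteration.

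Given these ingredients, each of the four cases falls out by direct specialization of the results of \cite{CoulHau2017}, with the dimensional parameter replaced by $D$ from \eqref{eq:dimension-Grushin-monomial}. Case (iv), $2\le p<D$, is the degenerate regime already treated in Theorem~\ref{thm65}, so the same Nash-Moser iteration applies once the classical Sobolev inequality is replaced by \eqref{Sobolev:genGrusin-monomial}. Cases (i)--(iii) cover the singular range $1<p<2$: the threshold $q>D(2-p)/p$ is the standard extinction-avoidance condition, and the three sub-ranges of $p$ reflect how the iteration closes relative to the critical Sobolev exponent $p^{\ast}=Dp/(D-p)$. In case (i), $1<p\le 2D/(D+2)$, the iteration must be started from a higher base exponent $q_0\ge p$, which accounts for the $q_0$-dependent exponents $\delta^{\ast}_{q_0}$ and $\gamma^{\ast}_{q_0}$; in cases (ii) and (iii) the iteration closes starting directly from $q$, recovering the exponents \eqref{eq:exp-part1}--\eqref{eq:exp-part2}.

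The main obstacle will be careful bookkeeping of exponents rather than any new analytic input. In particular, one must verify that the admissibility constraint $(D/(D-p)-1)q_0+p-2>0$ in case (i) is precisely the condition under which the modified Moser iteration in \cite[Theorem~1.4]{CoulHau2017} increments, and that the dimension $D$ from \eqref{eq:dimension-Grushin-monomial} plays the role of the Euclidean dimension uniformly throughout the argument; the latter is straightforward since $D$ enters \cite{CoulHau2017} only through the Sobolev exponent $p^{\ast}=Dp/(D-p)$. A minor technical point is to check that the homogeneous Dirichlet realization of Definition~\ref{def:horizontal-p-laplace} matches the boundary setting assumed in \cite{CoulHau2017}, which it does by construction of the sub-gradient in Proposition~\ref{prop:2bis}, so no extra approximation argument is required.
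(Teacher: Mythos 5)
Your proposal matches the paper's own argument: the paper derives Theorem~\ref{thm66-monomial} exactly by invoking \cite[Theorem~1.2 \& Theorem~1.4]{CoulHau2017} together with the Sobolev inequality \eqref{Sobolev:genGrusin-monomial} of Theorem~\ref{thm:Sobolev-genGrusin-monomial} (valid for the full range $1\le p<D$, which is what makes the singular cases $1<p<2$ accessible here, unlike in the Gru\v{s}in case), with the functional-analytic prerequisites supplied by Propositions~\ref{prop:2} and~\ref{prop:semigroup}. Your write-up is in fact more explicit about the exponent bookkeeping than the paper, which states the conclusion directly after citing these ingredients.
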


We conclude this section with the following remark.

\begin{rem}
  In addition, to the preceding theorem, we could also apply the
  Trudinger-Moser inequality (for $p=D$) and the Morrey inequality (for
  $p>D$) from \cite{MR3097258} to derive the
  $L^{q}$-$L^{\infty}$-regularizing
  estimate~\eqref{eq:Lq-Linfty-estimates-diff} of the semigroup
  semigroup $\{e^{-t\Delta_{p}^{\!\bm{A},D}}\}_{t\ge 0}$ on bounded subsets
  $\Omega$ of $\R^d$.
\end{rem}


\section{Separation phenomena: Dichotomy of continuity}
\label{sec:Separation}

This section is dedicated to the separation phenomena. For this, we focus
throughout this section on the sub-Riemannian structure $(\R^d,\bm{A})$
given by \eqref{eq:genGrushin-Matrix} for $d=1+m$, $0\le \alpha<2$ and
$\beta_1$, \dots, $\beta_m \ge0$. We emphasize that the same separation
phenomena holds for the sub-Riemannian structure $(\R^d,\bm{A})$
given by \eqref{eq:dimension-Grushin-monomial} and at several
hyperplanes $\{x_1=0\}$, \dots, $\{x_n=0\}$.\medskip

More precisely, let
\begin{equation}
  \label{eq:genGrushin-Matrix-simplified}
       \bm{A}=\begin{bmatrix} 
       |x_1|^{\alpha}\, & 0 \\
       0 & \begin{bmatrix}
       |x_1|^{\beta_{1}} & 0 & \cdots & 0\\
       0& |x_1|^{\beta_{2}} & \ddots &  0\\
       \vdots & \ddots & \ddots & 0\\
       0 & \cdots &   0 & |x_1|^{\beta_{m}}
       \end{bmatrix}_{m\times m}
  \end{bmatrix}.
\end{equation}
Then the length of the gradient $\nabla_{\!\bm{A}}f$ in local coordinates 
\begin{equation}
        \label{eq:Grushin-gradient-norm-simplified}
        \abs{\nabla_{\!\bm{A}}f}_{\bm{A}}^2=|x_1|^{\alpha}\labs{\tfrac{\partial f}{\partial x_1}}^2+
        \sum_{j=1}^{m}|x_1|^{\beta_j}\labs{\tfrac{\partial f}{\partial y_j}}^2.
\end{equation}
and the associated Gru\v{s}in-type $p$-Laplace operator $\Delta_{p}^{\!\bm{A}}$ on an open subset $\Omega$ of $\R^{d+1}$ is given by
\begin{displaymath}
\begin{split}
 \langle\Delta_{p}^{\!\bm{A}}f,\xi\rangle_{\D'(\Omega),\D(\Omega)}
 &=\int_{\Omega}
 \left[|x_1|^{\alpha}\,\labs{\tfrac{\partial f}{\partial x_1}}^2
        +\sum_{j=1}^{m}\abs{x_1}^{\beta_{j}}\labs{\tfrac{\partial f}{\partial y_j}}^2\right]^{\frac{p-2}{2}}
        \times\\
      &\hspace{2cm} \times\left[|x_1|^{\alpha}
      \tfrac{\partial f}{\partial x_1}\tfrac{\partial \xi}{\partial x_1} 
        +\sum_{j=1}^{m}\abs{x_1}^{\beta_{j}}\tfrac{\partial f}{\partial y_j}\tfrac{\partial \xi}{\partial y_j}\right]\,\dx
 \end{split}
\end{displaymath}
for every $f\in W^{1,p}_{\!\bm{A},loc}(\Omega)$ and $\xi\in
C^{\infty}_c(\Omega)$. Since the separation phenomena is established
with respect to the space variable, we first to the stationary equation.

\subsection{The elliptic  equation}
\label{sec:separation-elliptic}

We begin this subsection by introducing the following notion of solutions. 

\begin{defn}\label{def1}
    Let $\Omega\subseteq \R^{d}$ be an open subset. Then, a function $f\in W^{1,p}_{\!\bm{A},loc}(\Omega)$ is called a \emph{weak solution} of the Gru\v{s}in-type $p$-Laplace equation
    \begin{equation}
    \label{eq:13}
      -\Delta_{p}^{\! \bm{A}}f=0
      \qquad\text{in $\Omega$,} 
    \end{equation}
    if
    \begin{displaymath}
     \langle\Delta_{p}^{\!\bm{A}}f,g\rangle_{\D'(\Omega),\D(\Omega)} =0
    \end{displaymath}
    for every $\xi\in C^{\infty}_{c}(\Omega)$.
\end{defn}

\begin{rem}\label{rem:Ap-weights}
  It is worth mentioning that in the case
  $\beta_1=\cdots =\beta_m=\alpha$ and $1<p<\infty$, the weight
  $\abs{x_1}^{\frac{\alpha}{2}p}$ belongs to the $A_{p}$ Muckenhoupt
  class if and only if $-1<\frac{\alpha}{2}p<p-1$
  (see~\cite[p. 10]{MR1207810}). Note, this condition on $\alpha$ is
  equivalent to
  $-\frac{1}{p}<\alpha<\frac{2}{p^{\mbox{}_{\prime}}}=2(p-1)/p$. According
  to \cite[Theorem~3.70]{MR1207810}, if
  $-\frac{1}{p}<\alpha<\frac{2}{p^{\mbox{}_{\prime}}}$, then every weak
  solution $f$ of~\eqref{eq:13} admits a continuous representative
  $\bar{f}$ such that $f=\bar{f}$ a.e. on $\Omega$.
\end{rem}

\begin{defn}
  Let $\Omega\subseteq \R^d$ be an open subset. Then, a function
  $f : \Omega\to \R$ is called to be \emph{$(\bm{A},p)$-harmonic} on
  $\Omega$ if $f$ is a continuous weak solution of the Gru\v{s}in-type
  $p$-Laplace equation~\eqref{eq:13}.
\end{defn}

\begin{rem}\label{rem:harnack-Ap-weights}
  Further, in the case $\beta_1=\cdots =\beta_m=\alpha$ and
  $1<p<\infty$, it follows from~\cite[Theorem 6.2]{MR1207810} that for
  the range $-\frac{1}{p}<\alpha<\frac{2}{p^{\mbox{}_{\prime}}}$, every
  positive $(\bm{A},p)$-harmonic function defined on an open subset
  $\Omega\subset \R^d$ (which might include the hyper-plane $\{x_1=0\}$)
  satisfies a \emph{Harnack inequality}. Hence, in particular, every
  $(\bm{A},p)$-harmonic function defined on a domain
  $\Omega\subset \R^d$ is H\"older continuous.
\end{rem}

The next theorem shows that if
$\frac{2}{p^{\mbox{}_{\prime}}}\le \alpha<2$, then every
$(\bm{A},p)$-harmonic function defined on an open subset
$\Omega\subset \R^d$ containing the hyperplane $\{x_1=0\}$ does not need to
be continuous at $\{x_1=0\}$. Thus the following theorem proves an
optimality condition of the class of $A_{p}$ Muckenhoupt weights for
obtaining continuity of weak solutions $f$ of equation~\eqref{eq:13}.

\begin{thm}
  \label{thm:5}
  Let $1<p<\infty$, $p^{\mbox{}_{\prime}}=p/(p-1)$, and for
  $0\le \alpha<2$, $\beta_1$, \dots, $\beta_m \ge 0$, let
  $(\R^d,\bm{A})$ be the Riemannian structure given by \eqref
  {eq:genGrushin-Matrix-simplified}. Further, let $\Omega\subseteq \R^d$
  be an open subset such that $\Omega\cap \{x_1=0\}\neq \emptyset$. If
  $\frac{2}{p^{\mbox{}_{\prime}}}\le \alpha<2$, then a weak solution
  $f \in W^{1,p}_{\! \bm{A},loc}(\Omega)$ of the Gru\v{s}in-type
  $p$-Laplace equation~\eqref{eq:13} does not need to be continuous
  along $y_j$ crossing $\{x_1=0\}$ for every $j=1$, \dots, $m$.
\end{thm}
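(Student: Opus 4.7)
The plan is to show that for $\alpha\ge 2/p^{\mbox{}_{\prime}}$ the hyperplane $\{x_1=0\}$ is \emph{removable} for the weighted energy, so the $p$-Laplace equation on $\Omega$ decouples into two independent equations on $\Omega_\pm:=\Omega\cap\{\pm x_1>0\}$. Once this is established, a locally constant function with a jump across $\{x_1=0\}$ yields the desired discontinuous weak solution. Concretely I would fix two distinct constants $c_+\ne c_-$ and set
\begin{displaymath}
  f(x):= c_+\chi_{\{x_1>0\}}(x)+c_-\chi_{\{x_1<0\}}(x),\qquad x=(x_1,y)\in\Omega,
\end{displaymath}
and verify that $f$ represents an element of $W^{1,p}_{\!\bm{A},loc}(\Omega)$ with $\nabla_{\!\bm{A}}f=0$ a.e., and that $\langle\Delta_{p}^{\!\bm{A}}f,\xi\rangle_{\D'(\Omega),\D(\Omega)}=0$ for every $\xi\in C_c^\infty(\Omega)$, which by Definition~\ref{def1} makes $f$ a weak solution of \eqref{eq:13}.

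The analytic heart of the argument is a cut-off lemma. For $\varepsilon>0$ pick $\eta_\varepsilon\in C^\infty(\R)$ with $\eta_\varepsilon=0$ on $\{|t|\le\varepsilon/2\}$, $\eta_\varepsilon=1$ on $\{|t|\ge\varepsilon\}$ and $|\eta_\varepsilon'|\le C/\varepsilon$, and consider $\xi_\varepsilon(x):=\eta_\varepsilon(x_1)\xi(x)$ for $\xi\in C_c^\infty(\Omega)$. Because the horizontal gradient in local coordinates weights the $x_1$-derivative by $|x_1|^{\alpha/2}$ (see \eqref{eq:Grushin-gradient-norm-simplified}), the leading error term is
\begin{displaymath}
  \int_\Omega|x_1|^{\alpha p/2}|\eta_\varepsilon'(x_1)|^p|\xi(x)|^p\,\td x \;\lesssim\; \varepsilon^{-p}\int_{|x_1|<\varepsilon}|x_1|^{\alpha p/2}\,\td x_1 \;\lesssim\; \varepsilon^{\alpha p/2+1-p},
\end{displaymath}
which tends to $0$ precisely when $\alpha>2/p^{\mbox{}_{\prime}}$; the borderline case $\alpha=2/p^{\mbox{}_{\prime}}$ is recovered by a logarithmic cut-off such as $\eta_\varepsilon(t):=\min\{1,\max\{0,\log(|t|/\varepsilon^2)/\log(1/\varepsilon)\}\}$, which produces the decaying factor $(\log(1/\varepsilon))^{1-p}\to 0$. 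Combined with the dominated convergence of the remaining terms, this yields $\xi_\varepsilon\to\xi$ in the seminorm $f\mapsto\||\nabla_{\!\bm{A}}f|_{\bm{A}}\|_p$, so test functions vanishing near $\{x_1=0\}$ are dense in $W^{1,(2,p)}_{\!\bm{A},0}(\Omega)$.

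To place $f$ itself in $W^{1,p}_{\!\bm{A},loc}(\Omega)$ with zero weighted gradient, I would smooth the jump: pick a nondecreasing $\theta\in C^\infty(\R)$ with $\theta=0$ on $(-\infty,0]$ and $\theta=1$ on $[1,\infty)$, and define $f_\delta(x_1,y):=c_-+(c_+-c_-)\theta(x_1/\delta)$. Applying the same pointwise bound as above with $\delta$ in place of $\varepsilon$ shows $\int_K|\nabla_{\!\bm{A}}f_\delta|_{\bm{A}}^p\,\td x\to 0$ on every compact $K\subset\Omega$, while $f_\delta\to f$ in $L^q_{loc}(\Omega)$ for every $q<\infty$. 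The completeness part of Proposition~\ref{prop:1} then identifies $f$ locally with the $W^{1,(q,p)}_{\!\bm{A}}$-limit of $(f_\delta)$, forcing $\nabla_{\!\bm{A}}f=0$ almost everywhere. Consequently $\langle\Delta_p^{\!\bm{A}}f,\xi\rangle_{\D'(\Omega),\D(\Omega)}=0$ trivially for every $\xi\in C_c^\infty(\Omega)$, and the jump $c_+\ne c_-$ makes $f$ discontinuous along every line parallel to any $y_j$-axis crossing $\{x_1=0\}$.

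The main obstacle I anticipate is the tension between this construction and the literal reading of the definition of $W^{1,p}_{\!\bm{A},loc}(\Omega)$, which demands membership in $W^{1,1}_{loc}(\Omega)$. The classical distributional derivative $\partial_{x_1}f=(c_+-c_-)\delta_{\{x_1=0\}}$ is a surface measure, not an $L^1_{loc}$ function; however, since $|x_1|^{\alpha/2}$ vanishes continuously on the support of this Dirac, the product $|x_1|^{\alpha/2}\partial_{x_1}f$ is the zero distribution, and the horizontal gradient $\nabla_{\!\bm{A}}f=\bm{A}\nabla f$ makes perfect sense as an element of $L^p_{loc}$. The correct framework is to identify $W^{1,p}_{\!\bm{A},loc}(\Omega)$ with the completion of smooth functions under the natural weighted norm (cf.\ the proof of Proposition~\ref{prop:1}); in this setting $f_\delta$ is Cauchy and unambiguously places $f$ inside the space, and the removability estimate in Step~2 is precisely the tool that certifies this identification is consistent with the weak formulation against arbitrary $\xi\in C_c^\infty(\Omega)$.
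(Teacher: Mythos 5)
Your proposal is essentially the paper's own argument: the paper proves a splitting lemma (Lemma~\ref{lem:5}) showing $\E(f)=\E(f\mathds{1}_{\Omega^{+}})+\E(f\mathds{1}_{\Omega^{-}})$ via exactly the cut-off computation you perform --- the exponent $\alpha p/2+1-p$ and the logarithmic truncator $\chi_n=\frac{\log x_1+\log n}{\log n}$ with the resulting factor $(\log n)^{1-p}\to0$ at the borderline --- and then exhibits $c\,\mathds{1}_{\Omega^{\pm}}$ as a discontinuous weak solution, which is your $f$ with $c_-=0$. Your remark about the tension with the requirement $f\in W^{1,1}_{loc}(\Omega)$ is well taken; the paper's Lemma~\ref{lem:5} quietly sidesteps this by working with $f\in L^1_{loc}(\Omega)$ having $\abs{\nabla_{\!\bm{A}}f}_{\bm{A}}\in L^p(\Omega)$ in the completion sense, which is the fix you propose.

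One small gap: in your smoothing step the profile $f_\delta=c_-+(c_+-c_-)\theta(x_1/\delta)$ has weighted energy of order $\delta^{\alpha p/2+1-p}$, which does \emph{not} vanish at the endpoint $\alpha=\frac{2}{p^{\mbox{}_{\prime}}}$ (the exponent is exactly $0$ there). You invoke the logarithmic cut-off only for the density of test functions vanishing near $\{x_1=0\}$; you must use it for $f_\delta$ as well --- i.e.\ take $\theta$ with the logarithmic profile of the paper's $\chi_n$ --- to cover the borderline case claimed in the theorem. With that adjustment the argument is complete.
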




For the proof of Theorem~\ref{thm:5} the following lemma is crucial.
For a given open subset $\Omega\subseteq \R^d$ $d=1+m$ satisfying
$\Omega\cap \{x_1=0\}\neq \emptyset$, we write $\Omega^{+}$ for
$\Omega\cap \{x_1>0\}$ and set $\Omega^{-}=\Omega\cap\{x_1<0\}$.
Further, we note that for the Riemannian structure $(\R^d,\bm{A})$ given
by \eqref{eq:genGrushin-Matrix-simplified} for an open subset
$\Omega\subseteq \R^{d}$, the functional $\E$ associated with the
Gru\v{s}in-type $p$-Laplace operator $\Delta_{p}^{\!\bm{A}}$ is given by
\begin{equation}
 \label{eq:Grushin-p-energy-simply}
  \E(f)=
  \tfrac{1}{p}\int_{\Omega}\left[ 
        |x_1|^{\alpha}\labs{\tfrac{\partial f}{\partial x_1}}^2
        +\sum_{j=1}^{m}\abs{x_1}^{\beta_{j}}\labs{\tfrac{\partial f}{\partial y_j}}^2\right]^{\frac{p}{2}}\!
  \dx 
\end{equation}
for every $f\in L^1_{loc}(\Omega)$ with $\abs{\nabla_{\!\bm{A}}f}_{\bm{A}}\in L^p(\Omega)$.

\begin{lem}
  \label{lem:5}
  Let $1<p<\infty$, $p^{\mbox{}_{\prime}}=p/(p-1)$, and for
  $0\le \alpha<2$, $\beta_1$, \dots, $\beta_m \ge 0$, let
  $(\R^d,\bm{A})$ be the Riemannian structure given by \eqref
  {eq:genGrushin-Matrix-simplified}. Further, let $\Omega\subseteq \R^d$
  be an open subset such that $\Omega\cap \{x_1=0\}\neq \emptyset$. If
  $\frac{2}{p^{\mbox{}_{\prime}}}\le \alpha<2$, then for
  every $f\in L^1_{loc}(\Omega)$ with
  $\abs{\nabla_{\!\bm{A}}f}_{\bm{A}}\in L^p(\Omega)$, one has that
  \begin{displaymath}
    \nabla_{\!\bm{A}}(f\mathds{1}_{\Omega^{\pm}})=(\nabla_{\!\bm{A}}f)\mathds{1}_{\Omega^{\pm}}
  \end{displaymath}
  with
  \begin{displaymath}
    \abs{\nabla_{\!\bm{A}}(f\mathds{1}_{\Omega^{\pm}})}_{\bm{A}} \in L^p(\Omega^{\pm}).
  \end{displaymath} 
  In particular, the functional $\E$ given by \eqref{eq:Grushin-p-energy-simply}
  satisfies
\begin{equation}
  \label{eq:2}
  \E(f)= \E(f\mathds{1}_{\Omega^{+}}) +\E(f\mathds{1}_{\Omega^{-}})
\end{equation}
 In particular, one has that
\begin{displaymath}
   \langle \E'(f),g\rangle= \langle \E'(f\mathds{1}_{\Omega^{+}}),g\rangle+\langle \E'(f\mathds{1}_{\Omega^{-}}),g\rangle
\end{displaymath}
for every $f$, $g\in L^1_{loc}(\Omega)$ with $\abs{\nabla_{\!\bm{A}}f}_{\bm{A}}$, $\abs{\nabla_{\!\bm{A}}g}_{\bm{A}}\in L^p(\Omega)$.
\end{lem}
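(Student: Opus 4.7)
The strategy is to derive everything from the single distributional identity
\[
 \nabla_{\!\bm{A}}(f\,\mathds{1}_{\Omega^\pm})=(\nabla_{\!\bm{A}}f)\,\mathds{1}_{\Omega^\pm}.
\]
Once this is in hand, the $L^p(\Omega^\pm)$-bound on $\abs{\nabla_{\!\bm{A}}(f\mathds{1}_{\Omega^\pm})}_{\bm{A}}$ is automatic from $\abs{\nabla_{\!\bm{A}}f}_{\bm{A}}\in L^p(\Omega)$, the energy decomposition~\eqref{eq:2} follows from the pointwise decomposition $\Omega=\Omega^+\cup\Omega^-\cup\{x_1=0\}$ together with $\mathcal{L}^{d}(\{x_1=0\})=0$ applied to the integrand of~\eqref{eq:Grushin-p-energy-simply}, and the splitting of $\E'(f)$ then drops out of the Fr\'echet-derivative formula derived in Proposition~\ref{prop:2bis}.

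To establish the identity, I would approximate $\mathds{1}_{\Omega^+}$ by a one-parameter family of smooth cutoffs $\eta_\varepsilon\in C^{\infty}(\R)$ depending only on $x_1$, with $\eta_\varepsilon=0$ for $x_1\le 0$, $\eta_\varepsilon=1$ for $x_1\ge\varepsilon$, and $0\le\eta_\varepsilon\le 1$, and form $f\eta_\varepsilon\in W^{1,1}_{loc}(\Omega)$. The classical product rule yields
\[
 \nabla_{\!\bm{A}}(f\eta_\varepsilon)=\eta_\varepsilon\,\nabla_{\!\bm{A}}f+f\,\nabla_{\!\bm{A}}\eta_\varepsilon,
\]
where the residual term $f\,\nabla_{\!\bm{A}}\eta_\varepsilon$ has only an $x_1$-component, with $\bm{A}$-length equal to $\abs{f}\,\abs{x_1}^{\alpha/2}\,\abs{\eta_\varepsilon'(x_1)}$ (the $y_j$-partials of $\eta_\varepsilon$ vanish identically). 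The first term converges to $(\nabla_{\!\bm{A}}f)\mathds{1}_{\Omega^+}$ in $L^p(\Omega)$ by dominated convergence, so the entire argument reduces to showing
\[
 I(\varepsilon):=\int_\Omega\abs{f(x)}^p\,\abs{x_1}^{\alpha p/2}\,\abs{\eta_\varepsilon'(x_1)}^p\,\dx\longrightarrow 0
\]
for a well-chosen $\eta_\varepsilon$.

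The hypothesis $\alpha\ge 2/p^{\mbox{}_{\prime}}$ is precisely what makes $I(\varepsilon)\to 0$, provided the cutoff scale is chosen correctly and $f$ is first taken bounded. For $\alpha>2/p^{\mbox{}_{\prime}}$ the piecewise-linear cutoff $\eta_\varepsilon'(x_1)=\varepsilon^{-1}\mathds{1}_{[0,\varepsilon]}(x_1)$ gives $I(\varepsilon)\lesssim\norm{f}_{\infty}^p\,\varepsilon^{\alpha p/2-p+1}\to 0$, because $\alpha p/2-p+1>0$. In the borderline case $\alpha=2/p^{\mbox{}_{\prime}}$ the linear cutoff only yields boundedness, and I would replace it by the logarithmic cutoff $\eta_\varepsilon(x_1)=\log(x_1/\varepsilon^2)/\log(1/\varepsilon)$ on $[\varepsilon^2,\varepsilon]$, for which a direct computation gives $I(\varepsilon)\lesssim\norm{f}_{\infty}^p\,\log(1/\varepsilon)^{1-p}\to 0$ since $p>1$. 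The extension from bounded to general $f\in L^1_{loc}$ with $\abs{\nabla_{\!\bm{A}}f}_{\bm{A}}\in L^p$ is then standard: truncate by $T_kf:=\max(-k,\min(k,f))$, note that $\nabla_{\!\bm{A}}T_kf=(\nabla_{\!\bm{A}}f)\mathds{1}_{\{\abs{f}<k\}}$ a.e., and run a diagonal argument in $(k,\varepsilon)$.

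The main obstacle I anticipate is the borderline case $\alpha=2/p^{\mbox{}_{\prime}}$: the natural piecewise-linear cutoff fails there and one is forced onto the logarithmic scale. Conceptually, the distributional jump $f\vert_{x_1=0}\cdot\delta_{\{x_1=0\}}$ produced by differentiating $\mathds{1}_{\Omega^+}$ is multiplied by $\abs{x_1}^\alpha$ in the very definition of $\nabla_{\!\bm{A}}$, and the threshold $\alpha\ge 2/p^{\mbox{}_{\prime}}$ is exactly what ensures this weight kills the jump in the $L^p$-sense — mirroring the failure of $\abs{x_1}^{\alpha p/2}$ to be an $A_p$-Muckenhoupt weight at this threshold (cf.~Remark~\ref{rem:Ap-weights}), which is the analytic mechanism underlying the separation phenomenon.
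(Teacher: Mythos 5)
Your proposal is correct and follows essentially the same route as the paper's proof: both reduce to bounded $f$ by the truncation $T_k$, approximate $\mathds{1}_{\Omega^{\pm}}$ by cutoffs depending only on $x_1$, and observe that the residual term $f\,\nabla_{\!\bm{A}}\eta_\varepsilon$ vanishes in $L^p$ precisely when $\alpha\ge 2/p^{\mbox{}_{\prime}}$, with the logarithmic profile doing the work at the borderline (the paper uses the logarithmic cutoff $\chi_n=\frac{\log x_1+\log n}{\log n}$ uniformly for the whole range, which subsumes your two cases). The one step you gloss over is the further reduction to \emph{compactly supported} $f$ (via multiplication by cutoffs $\rho_k$, as in the paper): without it the strip integral $I(\varepsilon)$ over $\Omega\cap\{0<x_1<\varepsilon\}$ need not be finite for merely bounded $f$ on an unbounded $\Omega$, since the hypotheses give no global integrability of $f$ itself.
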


\begin{rem}
  \label{rem:treshold-2-comment}
  Let $(\R^d,\bm{A})$ be the Riemannian structure given by \eqref
  {eq:genGrushin-Matrix-simplified}. Then, it is worth noting that in
  Lemma~\ref{lem:5}, the factor $2$ in the lower bound $2/p'$ on $\alpha$
  results from the \emph{square root} $\bm{B}(x)$ of $\bm{A}(x)$ and not
  since the function $\E$ given by \eqref{eq:Grushin-p-energy-simply} is
  defined by with respect to the $\Euc$-norm; that is,
  \begin{displaymath}
    \E(f)=
  \tfrac{1}{p}\int_{\Omega}\labs{(|x_1|^{\frac{\alpha}{2}}
    \tfrac{\partial f}{\partial x_1}, \abs{x_1}^{\frac{\beta_{1}}{2}}
    \tfrac{\partial f}{\partial y_1},\dots,\abs{x_1}^{\frac{\beta_{m}}{2}}
    \tfrac{\partial f}{\partial y_m})}^p_{\Euc}\,
  \dx .
  \end{displaymath}
\end{rem}

\begin{proof}[Proof of Lemma~\ref{lem:5}]
  Note, it is sufficient to establish \eqref{eq:2} for
  $f \in L^{\infty}(\Omega)$ with compact $\supp(f)\subseteq \Omega$ and
  $\abs{\nabla_{\!\bm{A}}f}_{\bm{A}}\in L^p(\Omega)$. To see this, let
  for every $n>0$, $T_{n} : \R\to \R$ be the truncator given by
\begin{displaymath}
  T_{n}(s)=
  \begin{cases}
  s & \text{if $\abs{s}\le n$,}\\
  n\frac{s}{\abs{s}} &\text{otherwise,}
  \end{cases}
\end{displaymath}
for every $s\in \R$. Then, $T_n(f) \in L^{\infty}(\Omega)$, and since
$T_{n}$ is Lipschitz continuous with constant 1, $T_n(0)=0$,
$\abs{T_n(f)(x)}\le \max\{\abs{f(x)},n\}$, and since
\begin{displaymath}
  \nabla_{\!\bm{A}}T_{n}(f)=\bm{A}\nabla f\,\mathds{1}_{\{\abs{f}\le n\}}
\end{displaymath}
for every $f \in L^{\infty}(\Omega)$ with
$\abs{\nabla_{\!\bm{A}}f}_{\bm{A}}\in L^p(\Omega)$, it follows from
Lebesgue's dominated convergence theorem that $T_nf\to f$ in
$L^1_{loc}(\Omega)$, and 
\begin{displaymath}
  \nabla_{\!\bm{A}}T_{n}(f)\to \nabla_{\!\bm{A}}f\qquad\text{in $L^p(\Omega)$.}
\end{displaymath}
Next, for every $k\in \N$, let
$\Omega_k:=\{x\in
\Omega\,\vert\,\dist(x,\partial\Omega)>\frac{1}{k}\}\cap \{|x|<k\}$.
Then, $\Omega_k\subseteq \Omega_{k+1}$,
$\bigcup_{k\in \N}\Omega_k=\Omega$. Without loss of generality, we may
assume that $\Omega_1\neq \emptyset$. Then, let
$\rho\in C^{\infty}_c(\Omega)$ satisfy $0\le \rho\le 1$, $\rho\equiv 1$
on the closure $\overline{\Omega_{1}}$,
$\supp(\rho)\subseteq \Omega_{2}$, and
$\norm{\rho}_{L^1(\Omega)}=1$. Further, for every $k\ge 1$, set
$\rho_k(x):=\rho(x/k)$ for every $x\in \Omega$. Then, one has that
$\rho_k\in C^{\infty}_c(\Omega)$, $0\le \rho_k\le 1$, $\rho_k\equiv 1$
on the closure $\overline{\Omega_{k}}$, and
$\supp(\rho_k)\subseteq \Omega_{2k}$. Hence, the test-function
$T_n(f)\rho_k\in L^{\infty}(\Omega)$ and has compact support in
$\Omega$. Moreover, for every compact subset $K$ of $\Omega$, there is a
$k_0\in \N$ such that $K\subseteq \Omega_k$ for all $k\ge 0$, and so
$T_n(f)\rho_k=T_n(f)$ for all $k\ge k_0$ and $n\ge 1$. Therefore,
$T_n(f)\rho_k\to f$ in $L^1_{loc}(\Omega)$ as $n$, $k\to
\infty$. In addition, 
\begin{displaymath}
  \nabla_{\!\bm{A}}\big[T_{n}(f)\rho_k\big]=\bm{A}\nabla
  f\,\mathds{1}_{\{\abs{f}\le n\}}\, \rho_k+\frac{1}{k}\bm{A}\nabla\rho(\cdot/k)\,T_{n}(f)
\end{displaymath}
for every $f \in L^{\infty}(\Omega)$ with
$\abs{\nabla_{\!\bm{A}}f}_{\bm{A}}\in L^p(\Omega)$. Thus,
\begin{displaymath}
  \nabla_{\!\bm{A}}\big[T_{n}(f)\rho_k\big]\to \nabla_{\!\bm{A}}f\qquad\text{in $L^p(\Omega)$}
\end{displaymath}
as $n$, $k\to \infty$.

Now, let $f \in L^{\infty}(\Omega)$ with compact 
$\supp(f)\subseteq \Omega$ and
$\abs{\nabla_{\!\bm{A}}f}_{\bm{A}}\in L^p(\Omega)$. Further, let
$(\chi_n)_{n\ge 1}$ be a sequence of truncator functions given by
  \begin{displaymath}
    \chi_{n}(x_1,y_1,\dots,y_m)=
    \begin{cases}
        0 & \quad \text{if $x_1\le \frac{1}{n}$,}\\
  \frac{\log x_1 + \log n}{\log n} &\quad  \text{if $\frac{1}{n}<x_1\le 1$,}\\
  1 & \quad \text{otherwise,}    
      \end{cases}
  \end{displaymath}
  for every $x=(x_1, y_1, \dots, y_m)\in \R^d$. Then, $\chi_{n}(x)\to
  \mathds{1}_{\Omega^{+}}(x)$ as $n\to \infty$ for every $x\in
  \Omega$, and by
  Lebesgue's dominated convergence theorem, $f\chi_{n}\to f
  \mathds{1}_{\Omega^{+}}$ in $L^{1}(\Omega)$. In addition, one has that
  \begin{align*}
    \tfrac{\partial}{\partial x_1}\left(f\chi_{n}\right)
    &= \tfrac{\partial f}{\partial x_1}\,\chi_{n}+\frac{1}{x_1\,\log n}\,f\,
    \,\mathds{1}_{\Omega\cap \{\frac{1}{n}<x_1<1\}},\\
    \tfrac{\partial}{\partial y_i}\left(f\chi_{n}\right)
    &= \tfrac{\partial f}{\partial y_i}\,\chi_{n},
  \end{align*}
 and so,
 \begin{displaymath}
    \lim_{n\to\infty}\nabla_{\!\bm{A}}\left(f\chi_{n}\right)(x)
    =(\nabla_{\!\bm{A}}f)(x)\mathds{1}_{\Omega^+}(x)
 \end{displaymath}
 for a.e. $x\in \R^d$. Next, let $n\ge 2$. Then,
 \allowdisplaybreaks
  \begin{align*}
    \E(f\chi_{n})&=
  \tfrac{1}{p}\int_{\Omega}\left[ 
        |x_1|^{\alpha}\labs{\tfrac{\partial }{\partial x_1}\left(f\chi_{n}\right)}^2
        +\sum_{j=1}^{m}\abs{x_1}^{\beta_{j}}\labs{\tfrac{\partial}{\partial
                   y_j}\left(f\chi_{n}\right)}^2\right]^{\frac{p}{2}}\!\dx\\ 
&=
  \tfrac{1}{p}\int_{\Omega}\left[ 
        |x_1|^{\alpha}\labs{\tfrac{\partial f}{\partial x_1}\,\chi_{n}+\frac{1}{x_1\,\log n}\,f\,
    \,\mathds{1}_{\Omega\cap \{\frac{1}{n}<x_1<1\}}}^2\right.\\
&\mbox{}\hspace{6cm}\left. 
+\sum_{j=1}^{m}\abs{x_1}^{\beta_{j}}\labs{\tfrac{\partial f}{\partial
  y_i}\,\chi_{n}}^2\right]^{\frac{p}{2}}\!\dx\\
&\le
  \tfrac{1}{p}\int_{\Omega}\left[ 
        |x_1|^{\alpha}2\labs{\tfrac{\partial f}{\partial x_1}}^2+2\,\labs{\frac{|x_1|^{\frac{\alpha}{2}}}{x_1\,\log n}\,f\,
    \,\mathds{1}_{\Omega\cap \{\frac{1}{n}<x_1<1\}}}^2 \right.\\
&\mbox{}\hspace{6.5cm}\left.+\sum_{j=1}^{m}\abs{x_1}^{\beta_{j}}\labs{\tfrac{\partial f}{\partial
  y_i}}^2\right]^{\frac{p}{2}}\!\dx\\
&\le
  \tfrac{C_p}{p}\int_{\Omega} \left[
        |x_1|^{\alpha\frac{p}{2}}2^{\frac{p}{2}}\labs{\tfrac{\partial
  f}{\partial x_1}}^p+2^{\frac{p}{2}}
  \,\labs{\frac{|x_1|^{\frac{\alpha}{2}}}{x_1\,\log n}\,f\,
    \,\mathds{1}_{\Omega\cap \{\frac{1}{n}<x_1<1\}}}^p \right.\\
&\mbox{}\hspace{6.5cm}\left.
+\sum_{j=1}^{m}\abs{x_1}^{\beta_{j}\frac{p}{2}}\labs{\tfrac{\partial f}{\partial
  y_i}}^p\right]\,\dx\\
&=
  \tfrac{C_p2^{\frac{p}{2}}}{p}\left[\int_{\Omega} 
        |x_1|^{\alpha\frac{p}{2}}\labs{\tfrac{\partial
  f}{\partial x_1}}^p\dx + \int_{\Omega\cap \{\frac{1}{n}<x_1<1\}} 
  \frac{|x_1|^{\frac{\alpha}{2}p}}{|x_1|^p\,(\log n)^p}\,|f|^p\,
     \dx\right]\\
&\mbox{}\hspace{6.1cm}
+\tfrac{C_p}{p}\int_{\Omega}\sum_{j=1}^{m}\abs{x_1}^{\beta_{j}\frac{p}{2}}\labs{\tfrac{\partial f}{\partial
  y_i}}^p\,\dx\\
&=
  \tfrac{C_p2^{\frac{p}{2}}}{p}\left[\int_{\Omega} 
        |x_1|^{\alpha\frac{p}{2}}\labs{\tfrac{\partial
  f}{\partial x_1}}^p\dx + \int_{\frac{1}{n}}^{1}\int_{\Omega\cap \{x_1=r\}} 
  \frac{|r|^{\frac{\alpha}{2}p}}{r^p\,(\log n)^p}\,|f|^p\,
     \dy\,\td r\right]\\
&\mbox{}\hspace{6.1cm}
+\tfrac{C_p}{p}\int_{\Omega}\sum_{j=1}^{m}\abs{x_1}^{\beta_{j}\frac{p}{2}}\labs{\tfrac{\partial f}{\partial
  y_i}}^p\,\dx\\
&\le 
  \tfrac{C_p2^{\frac{p}{2}}}{p}\left[\int_{\Omega} 
        |x_1|^{\alpha\frac{p}{2}}\labs{\tfrac{\partial
  f}{\partial x_1}}^p\dx + \frac{|\supp(f)|\,\norm{f}_{\infty}}{(\log n)^p}\int_{\frac{1}{n}}^{1} 
  |r|^{(\frac{\alpha}{2}-1)p}\,\td r\right]\\
&\mbox{}\hspace{6.1cm}
+\tfrac{C_p}{p}\int_{\Omega}\sum_{j=1}^{m}\abs{x_1}^{\beta_{j}\frac{p}{2}}\labs{\tfrac{\partial f}{\partial
  y_i}}^p\,\dx 
  \end{align*}
  Note, for $p>1$ and $p-p\frac{\alpha}{2}-1\le 0$, or, equivalently,
  $\alpha\ge \frac{2}{p^{\mbox{}_{\prime}}}$, one has that
  \begin{equation}
  \label{eq:223}
    \lim_{n\to \infty}\frac{1}{(\log n)^p}\int_{\frac{1}{n}}^{1}
    r^{p(\frac{\alpha}{2}-1)}\,\td r=0.
  \end{equation}
  If $p=1$, then~\eqref{eq:223} holds provided $\alpha>0$. Therefore, we
  have shown that
  \begin{displaymath}
    \E(f\chi_{n})\to \E(f\mathds{1}_{\Omega^+})\qquad\text{as $n\to\infty$.}
  \end{displaymath}
  Similarly, let
$(\hat{\chi}_n)_{n\ge 1}$ be a sequence of truncator functions given by
  \begin{displaymath}
    \hat{\chi}_{n}(x_1,y_1,\dots,y_m)=
    \begin{cases}
       1 & \quad \text{if $x_1\le -1$,}\\
     \frac{\log (-x_1) + \log n}{\log n} &\quad  \text{if $-1<x_1\le -\frac{1}{n}$,}\\
        0 & \quad \text{if $x_1\ge -\frac{1}{n}$,} 
      \end{cases}
  \end{displaymath}
  for every $x=(x_1, y_1, \dots, y_m)\in \R^d$. Then, a similar
  computation shows that 
  \begin{displaymath}
    \E(f\hat{\chi}_{n})\to \E(f\mathds{1}_{\Omega^-})\qquad\text{as $n\to\infty$}
  \end{displaymath}
  provided $p>1$ and $\alpha>\frac{2}{p^{\mbox{}_{\prime}}}$.  Now,  since
  \begin{displaymath}
    \E(f\hat{\chi}_{n}+f
    \chi_{n})=\E(f\hat{\chi}_{n})+\E(f \chi_{n}),
  \end{displaymath}
   sending $n\to \infty$ yields that \eqref{eq:2}. Next, let $f$, $g \in L^{\infty}(\Omega)$ with compact 
$\supp(f)\subseteq \Omega$, $\supp(g)\subseteq \Omega^+$, and
, $\abs{\nabla_{\!\bm{A}}g}_{\bm{A}}\in L^p(\Omega)$. Then,
   \begin{align*}
       \frac{\E(f+tg)-\E(f)}{t}&=\frac{\E((f+tg)\mathds{1}_{\Omega^+})-\E(f\mathds{1}_{\Omega^+})}{t}\\
       &\hspace{2cm}+\frac{\E((f+tg)\mathds{1}_{\Omega^-})-\E(f\mathds{1}_{\Omega^-})}{t}\\
       &=\frac{\E((f+tg)\mathds{1}_{\Omega^+})-\E(f\mathds{1}_{\Omega^+})}{t}
   \end{align*}
 for every $t>0$, from where we can conclude that
 \begin{displaymath}
     \langle \E'(f),g\mathds{1}_{\Omega^{+}}\rangle= \langle \E'(f\mathds{1}_{\Omega^{+}}),g\mathds{1}_{\Omega^{+}}\rangle
 \end{displaymath}
 Similarly, one obtains that
 \begin{displaymath}
     \langle \E'(f),g\mathds{1}_{\Omega^{-}}\rangle= \langle \E'(f\mathds{1}_{\Omega^{-}}),g\mathds{1}_{\Omega^{-}}\rangle
 \end{displaymath}
 for $g$ with $\supp(g)\subseteq \Omega^-$. Now, for general
 $g \in L^{\infty}(\Omega)$ with compact $\supp(g)\subseteq \Omega$ and
 $\abs{\nabla_{\!\bm{A}}g}_{\bm{A}}\in L^p(\Omega)$, one has that
\begin{align*}
    \langle \E'(f),g\rangle&=\langle
                             \E'(f),g\mathds{1}_{\Omega^{+}}\rangle
                             +\langle \E'(f),g\mathds{1}_{\Omega^{-}}\rangle\\
    &=\langle
      \E'(f\mathds{1}_{\Omega^{+}}),g\mathds{1}_{\Omega^{+}}\rangle
      +\langle \E'(f\mathds{1}_{\Omega^{-}}),g\mathds{1}_{\Omega^{-}}\rangle\\
    &=\langle \E'(f\mathds{1}_{\Omega^{+}}),g\rangle
      +\langle \E'(f\mathds{1}_{\Omega^{-}}),g\rangle,
\end{align*}
concluding the proof of this lemma.
\end{proof}

 With the above given lemma, we can now outline the proof of the main
 result of this section, namely, Theorem~\ref{thm:5}.\medskip

\begin{proof}[Proof of Theorem~\ref{thm:5}.]
  For every constant $c\in \R\setminus\{0\}$, $f=c\,\mathds{1}_{\Omega}$
  is a continuous weak solution of the Gru\v{s}in-type $p$-Laplace
  equation~\eqref{eq:13} on $\Omega$. But due to Lemma~\ref{lem:5},
  $f^+:=c\,\mathds{1}_{\Omega^+}$ and $f^-:=c\,\mathds{1}_{\Omega^-}$
  are also weak solution of the Gru\v{s}in-type $p$-Laplace
  equation~\eqref{eq:13} on $\Omega$, which are having a simple jump
  discontinuity along $y_j$ crossing $\{x_1=0\}$ for every
  $j=1, \dots, m$.
\end{proof}

\subsection{The parabolic  equation}
\label{sec:separation-parabolic}

This section is dedicated to illustrate the separation phenomenon on
parabolic problems driven by $p$-Laplace operator. For this it is worth
noting the followin comment. In the following the
homogeneous Dirichlet boundary condition on the (outer) boundary $\partial\Omega$
of a given open subset $\Omega$ of $\R^d$ could be replaced with any
other kind of boundary conditions.\medskip

Let $1<p<\infty$, and $\E^{\Omega} : L^2(\Omega)\to [0,\infty]$
be the functional realizing the negative \emph{Dirichlet $p$-Laplace
  operator} $-\Delta^{\!\bm{A},D}_{p}$ on $L^2(\Omega)$ for an open
subset $\Omega$ of $\R^d$ (cf. Example~\ref{ex:2-gen-grushin}). We
denote by $\{e^{t\Delta^{\!\bm{A},D}_{p}}\}_{t\ge 0}$ be the semigroup
generated by $\Delta^{\!\bm{A},D}_{p}$ on $L^2(\Omega)$.

Since the effective
domain $D(\Delta^{\!\bm{A},D}_{p})$ is a subset of
$W^{1,(q,p)}_{\!  \bm{A},0}(\Omega)$, functions
$f\in D(\Delta^{\!\bm{A},D}_{p})$ vanish at the boundary
$\partial\Omega$ in a \emph{weak sense}. However, even if
$\Omega\cap\{x_1=0\}\neq\emptyset$, then function
$f\in D(\Delta^{\!\bm{A},D}_{p})$ can have the property that
$\supp(f)\cap\{x_1=0\}\neq\emptyset$.\medskip

For the rest of this final section, suppose that $\Omega$ is an
open subset of $\R^d$ such that $\Omega\cap\{x_1=0\}\neq\emptyset$.
Further, set $\dot{\Omega}:=\Omega^+\dot{\cup} \Omega^-$. Then,
equivalently, one has that
$\dot{\Omega}=\Omega\setminus \{x_1=0\}$. Thus, the functional
$\E^{\dot{\Omega}} : L^2(\Omega)\to [0,\infty]$ given by
\begin{displaymath}
    \E^{\dot{\Omega}}(f):=
  \begin{cases}
  \tfrac{1}{p}\displaystyle
  \int_{\Omega}\abs{\nabla_{\!\! \bm{A}}f}_{\bm{A}}^{p}\,\dx   
 &\text{if $f\in W^{1,(2,p)}_{\!\bm{A},0}(\dot{\Omega})$,}\\
 +\infty &\text{otherwise,}
 \end{cases}
\end{displaymath}
is well-defined for every $f\in L^{2}(\Omega)$. It also follows from
Proposition~\ref{prop:2} that $\E^{\dot{\Omega}}$ is convex, proper, and
lower semi-continuous functional on $L^{2}(\Omega)$ realizing the
negative Dirichlet $p$-Laplace operator $-\dot{\Delta}^{\!\bm{A},D}_{p}$
on $L^2(\Omega)$. Let $\{e^{t\dot{\Delta}^{\!\bm{A},D}_{p}}\}_{t\ge 0}$ denote the semigroup
generated by $\dot{\Delta}^{\!\bm{A},D}_{p}$ on $L^2(\Omega)$.

Here, it is worth mentioning that in contrary to the
operator $\Delta^{\!\bm{A},D}_{p}$, functions
$f\in D(\dot{\Delta}^{\!\bm{A},D}_{p})$ vanish at the boundary
$\partial\Omega^{+}\cup \partial\Omega^-$ in a \emph{weak sense}.\medskip

 Next, we aim to show
that for the two functionals, one has that

\begin{lem}
  \label{lem:equal-energy-identity}
  Let $1<p<\infty$, $p^{\mbox{}_{\prime}}=p/(p-1)$, and for
  $0\le \alpha<2$, $\beta_1$, \dots, $\beta_m \ge 0$, let
  $(\R^d,\bm{A})$ be the Riemannian structure given by \eqref
  {eq:genGrushin-Matrix-simplified}. Further, let $\Omega\subseteq \R^d$
  be an open subset such that $\Omega\cap \{x_1=0\}\neq \emptyset$. If
  $\frac{2}{p^{\mbox{}_{\prime}}}\le \alpha<2$, then one has
  that the two spaces $W^{1,(q,p)}_{\!  \bm{A},0}(\Omega)$ and
  $W^{1,(q,p)}_{\!  \bm{A},0}(\dot{\Omega})$ coincide and
  \begin{equation}
    \label{eq:equality-of-EandEdotOmega}
    \E^{\dot{\Omega}}=\E^{\Omega}.
  \end{equation}
\end{lem}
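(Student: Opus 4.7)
The plan is to prove first that the two closure spaces $W^{1,(q,p)}_{\!\bm{A},0}(\Omega)$ and $W^{1,(q,p)}_{\!\bm{A},0}(\dot\Omega)$ coincide; once this is established, the identity \eqref{eq:equality-of-EandEdotOmega} follows at once, since both functionals have the same effective domain (a function in $L^2(\Omega)$ being identified with its restriction to $\dot\Omega$ modulo the null set $\{x_1=0\}$) and on that common domain both are given by the same integral of $\abs{\nabla_{\!\bm{A}}f}_{\bm{A}}^p$, while both equal $+\infty$ off it. The inclusion $W^{1,(q,p)}_{\!\bm{A},0}(\dot\Omega)\subseteq W^{1,(q,p)}_{\!\bm{A},0}(\Omega)$ is immediate after extension by zero across $\{x_1=0\}$, because $C^\infty_c(\dot\Omega)\subseteq C^\infty_c(\Omega)$ and the two Sobolev norms agree on $C^\infty_c(\dot\Omega)$.

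For the reverse inclusion, by density it suffices to show that every $f\in C^\infty_c(\Omega)$ lies in $W^{1,(q,p)}_{\!\bm{A},0}(\dot\Omega)$. I would repurpose the logarithmic cutoffs from the proof of Lemma~\ref{lem:5}: set
\begin{displaymath}
   \psi_n:=\chi_n+\hat{\chi}_n,
\end{displaymath}
so that $\psi_n$ is Lipschitz, vanishes on $\{\abs{x_1}\le 1/n\}$, and equals $1$ on $\{\abs{x_1}\ge 1\}$. The function $f\psi_n$ is then Lipschitz with compact support in $\dot\Omega$. Convergence $f\psi_n\to f$ in $L^q(\Omega)$ is immediate from dominated convergence. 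Expanding
\begin{displaymath}
  \nabla_{\!\bm{A}}(f\psi_n)=\psi_n\,\nabla_{\!\bm{A}}f+f\,\nabla_{\!\bm{A}}\psi_n,
\end{displaymath}
the first term tends to $\nabla_{\!\bm{A}}f$ in $L^p(\Omega;\R^d)$ by dominated convergence, while the key estimate carried out in the proof of Lemma~\ref{lem:5} yields
\begin{displaymath}
  \int_\Omega \abs{f\,\nabla_{\!\bm{A}}\psi_n}_{\bm{A}}^p\dx
  \lesssim \frac{\norm{f}_\infty^p\,\abs{\supp f}}{(\log n)^p}\int_{1/n}^{1} r^{p(\alpha/2-1)}\,\dr,
\end{displaymath}
which vanishes as $n\to\infty$ exactly because $\alpha\ge 2/p^{\mbox{}_{\prime}}$, i.e., $p(\alpha/2-1)\ge -1$ (cf.~\eqref{eq:223}). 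Thus $f\psi_n\to f$ in $W^{1,(q,p)}_{\!\bm{A}}(\Omega)$.

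To land inside $C^\infty_c(\dot\Omega)$ rather than merely among Lipschitz functions, I would mollify $f\psi_n$ at a scale $\varepsilon_n\ll 1/n$. On the set $\{\abs{x_1}\ge 1/(2n)\}$, which contains $\supp(f\psi_n)$, the matrix $\bm{A}$ is bounded and uniformly positive definite; hence standard Friedrichs mollification is innocuous there and preserves convergence in $W^{1,(q,p)}_{\!\bm{A}}$. A diagonal extraction then produces a sequence in $C^\infty_c(\dot\Omega)$ converging to $f$ in $W^{1,(q,p)}_{\!\bm{A}}(\Omega)$, so $f\in W^{1,(q,p)}_{\!\bm{A},0}(\dot\Omega)$ and the two spaces agree.

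The technical heart of the proof is absorbed into Lemma~\ref{lem:5}: the borderline threshold $\alpha=2/p^{\mbox{}_{\prime}}$ is precisely what allows the $(\log n)^p$ denominator to compensate the (logarithmically) divergent radial integral $\int_{1/n}^{1}r^{p(\alpha/2-1)}\,\dr$, and this is the only place where the hypothesis on $\alpha$ is used. The remaining ingredients---the mollification on compact subsets of $\dot\Omega$ and the passage from $C^\infty_c(\Omega)$ to arbitrary elements of $W^{1,(q,p)}_{\!\bm{A},0}(\Omega)$ by density---are entirely routine.
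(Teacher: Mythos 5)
Your proof is correct and follows essentially the same route as the paper: both reduce to $f\in C^{\infty}_c(\Omega)$ by density, use the logarithmic cutoffs $\chi_n$, $\hat{\chi}_n$ from the proof of Lemma~\ref{lem:5} together with the borderline estimate \eqref{eq:223} (which holds precisely because $\alpha\ge 2/p'$) to place $f$ in $W^{1,(q,p)}_{\!\bm{A},0}(\dot{\Omega})$, and note that the reverse inclusion is trivial. Your explicit mollification step merely spells out a detail the paper leaves implicit when it invokes the proof of Lemma~\ref{lem:5}.
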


\begin{proof}
  Let $f\in C^{\infty}_c(\Omega)$. Then the proof of Lemma~\ref{lem:5}
  shows that
  $f\mathds{1}_{\Omega^{\pm}}\in W^{1,(2,p)}_{\bm{A},0}(\Omega^{\pm})$,
  and since $\dot{\Omega}:=\Omega^+\dot{\cup} \Omega^-$, it follows that
  $f\in W^{1,(2,p)}_{\!\bm{A},0}(\dot{\Omega})$. Moreover, by
  Lemma~\ref{lem:5}, one has that \eqref{eq:2} for every
  $f\in C^{\infty}_c(\Omega)$. Since
  $W^{1,(2,p)}_{\!\bm{A},0}(\dot{\Omega})$ is closed, we can conclude
  that
  $W^{1,(2,p)}_{\!\bm{A},0}(\Omega)\subseteq
  W^{1,(2,p)}_{\!\bm{A},0}(\dot{\Omega})$
  and, in particular, that \eqref{eq:2} holds for every
  $f\in W^{1,(2,p)}_{\!\bm{A},0}(\Omega)$. On the other hand, one 
  immediately sees that the reverse inclusion of
  $W^{1,(2,p)}_{\!\bm{A},0}(\dot{\Omega})$ into $
  W^{1,(2,p)}_{\!\bm{A},0}(\Omega)$ always 
  holds. Thus, we have thereby shown that
\begin{displaymath}
    W^{1,(2,p)}_{\!\bm{A},0}(\Omega)=W^{1,(2,p)}_{\!\bm{A},0}(\dot{\Omega})
\end{displaymath}
and by construction of $\E^{\Omega}$ and $\E^{\dot{\Omega}}$ that
\eqref{eq:equality-of-EandEdotOmega} needs to hold.
\end{proof}

Now, the following result, is our main result of this final section.

\begin{thm}
  \label{thm:semigroup-identity}
  Let $1<p<\infty$, $p^{\mbox{}_{\prime}}=p/(p-1)$, and for
  $0\le \alpha<2$, $\beta_1$, \dots, $\beta_m \ge 0$, let
  $(\R^d,\bm{A})$ be the Riemannian structure given by \eqref
  {eq:genGrushin-Matrix-simplified}. Further, let $\Omega\subseteq \R^d$
  be an open subset such that $\Omega\cap \{x_1=0\}\neq \emptyset$. If
  $\frac{2}{p^{\mbox{}_{\prime}}}\le \alpha<2$, then for every $f\in
  L^2(\Omega)$, one has that 
  \begin{equation}
  \label{eq:2-semigroups-subsets}
  e^{t\Delta_p^{\!\bm{A},D}}[f\mathds{1}_{\Omega^{\pm}}]=
  [e^{t\Delta_p^{\!\bm{A},D}}f]\mathds{1}_{\Omega^{\pm}}
\end{equation}
and
\begin{equation}
  \label{eq:2-semigroups-eq}
  e^{t\Delta_p^{\!\bm{A},D}}f=
  [e^{t\Delta_p^{\!\bm{A},D}}f]\mathds{1}_{\Omega^{+}} 
  + [e^{t\Delta_p^{\!\bm{A},D}}f]\mathds{1}_{\Omega^{-}}
\end{equation}
for every $t>0$.
\end{thm}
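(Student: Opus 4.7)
The plan is to combine Lemma~\ref{lem:equal-energy-identity} with a direct-sum decomposition of the semigroup along the disjoint union $\dot{\Omega}=\Omega^+\dot{\cup}\Omega^-$. The point is that in the regime $\frac{2}{p^{\mbox{}_{\prime}}}\le\alpha<2$ the full and ``separated'' realisations of the Dirichlet $p$-Laplacian coincide in $L^2(\Omega)$, while the separated realisation trivially splits as a sum of two independent operators on $\Omega^+$ and $\Omega^-$.

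First, Lemma~\ref{lem:equal-energy-identity} yields $\E^{\Omega}=\E^{\dot{\Omega}}$ on $L^2(\Omega)$, so these two convex, proper, lower-semicontinuous functionals have identical sub-differentials in $L^2(\Omega)$ and hence generate the same semigroup; equivalently, $e^{t\Delta_p^{\!\bm{A},D}}=e^{t\dot{\Delta}_p^{\!\bm{A},D}}$ on $L^2(\Omega)$ for every $t\ge 0$. Next, because $\Omega^+$ and $\Omega^-$ are disjoint open sets and the hyperplane $\{x_1=0\}$ is Lebesgue-negligible, one has the orthogonal decomposition $L^2(\Omega)=L^2(\Omega^+)\oplus L^2(\Omega^-)$, and each $f\in W^{1,(2,p)}_{\!\bm{A},0}(\dot{\Omega})$ decomposes uniquely as $f=f\mathds{1}_{\Omega^+}+f\mathds{1}_{\Omega^-}$ with $f\mathds{1}_{\Omega^{\pm}}\in W^{1,(2,p)}_{\!\bm{A},0}(\Omega^{\pm})$ and
\begin{displaymath}
\E^{\dot{\Omega}}(f)=\E^{\Omega^+}(f\mathds{1}_{\Omega^+})+\E^{\Omega^-}(f\mathds{1}_{\Omega^-}).
\end{displaymath}

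The third step is to pass from this decomposition of the energy to a decomposition of the semigroup. Writing $e^{t\Delta_p^{\!\bm{A},D,\Omega^{\pm}}}$ for the semigroups generated by $-\partial\E^{\Omega^{\pm}}$ on $L^2(\Omega^{\pm})$ (extended by zero to $\Omega$), I would define
\begin{displaymath}
u(t):=e^{t\Delta_p^{\!\bm{A},D,\Omega^+}}[f\mathds{1}_{\Omega^+}]+e^{t\Delta_p^{\!\bm{A},D,\Omega^-}}[f\mathds{1}_{\Omega^-}]
\end{displaymath}
and verify that $u$ is a strong solution of the evolution inclusion $\dot{u}\in-\partial\E^{\dot{\Omega}}(u)$ with $u(0)=f$, using that the two summands live in the orthogonal complementary subspaces $L^2(\Omega^{\pm})$ and that $\partial\E^{\dot{\Omega}}$ factors accordingly. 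Uniqueness of mild solutions from the classical Br\'ezis--Komura theory (cf.~\cite{MR0348562}) then forces $u(t)=e^{t\dot{\Delta}_p^{\!\bm{A},D}}f=e^{t\Delta_p^{\!\bm{A},D}}f$. Multiplying by $\mathds{1}_{\Omega^{\pm}}$ extracts the respective summand and yields \eqref{eq:2-semigroups-subsets}, while summing the two pieces yields \eqref{eq:2-semigroups-eq}.

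The only delicate step is the factorisation $\partial\E^{\dot{\Omega}}(f)=\partial\E^{\Omega^+}(f\mathds{1}_{\Omega^+})\oplus\partial\E^{\Omega^-}(f\mathds{1}_{\Omega^-})$ in $L^2(\Omega^+)\oplus L^2(\Omega^-)$. This is a routine consequence of convex analysis for a sum of functionals whose variables live in complementary subspaces, and it is also consistent with Proposition~\ref{prop:2bis}, since the Fr\'echet derivative $\langle\E'(f),\xi\rangle$ is an integral and hence decomposes additively when tested against functions $\xi$ of disjoint support in $\Omega^{\pm}$. The whole argument rests crucially on Lemma~\ref{lem:equal-energy-identity}: without the coincidence of Sobolev spaces on $\Omega$ and $\dot{\Omega}$, there is no reason for the semigroup on $\Omega$ to preserve the splitting across $\{x_1=0\}$, and the separation phenomenon observed at the elliptic level (Theorem~\ref{thm:5}) would fail to survive at the parabolic level.
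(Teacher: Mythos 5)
Your proposal is correct, and it reaches the conclusion by a genuinely different mechanism than the paper for the key splitting step. Both arguments use Lemma~\ref{lem:equal-energy-identity} in the same way, namely to identify $\E^{\Omega}$ with $\E^{\dot{\Omega}}$ and hence the semigroup $\{e^{t\Delta_p^{\!\bm{A},D}}\}_{t\ge0}$ with $\{e^{t\dot{\Delta}_p^{\!\bm{A},D}}\}_{t\ge0}$. Where you diverge is in proving that the dotted semigroup respects the decomposition across $\{x_1=0\}$: you exploit the orthogonal splitting $L^2(\Omega)=L^2(\Omega^+)\oplus L^2(\Omega^-)$ together with the additive decomposition $\E^{\dot{\Omega}}(f)=\E^{\Omega^+}(f\mathds{1}_{\Omega^+})+\E^{\Omega^-}(f\mathds{1}_{\Omega^-})$, deduce the factorisation $\partial\E^{\dot{\Omega}}(f)=\partial\E^{\Omega^+}(f\mathds{1}_{\Omega^+})\oplus\partial\E^{\Omega^-}(f\mathds{1}_{\Omega^-})$ (which is indeed routine: test the subgradient inequality against perturbations supported in one half at a time), and invoke uniqueness in the Br\'ezis--Komura theory. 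The paper instead argues by hand: it sets $u(t)=e^{t\dot{\Delta}_p^{\!\bm{A},D}}[f\mathds{1}_{\Omega^{\pm}}]$, $v(t)=e^{t\dot{\Delta}_p^{\!\bm{A},D}}f$, and shows via the monotonicity of $x\mapsto\abs{x}^{p-2}x$ that $\tfrac{\td}{\dt}\tfrac12\norm{u(t)-v(t)\mathds{1}_{\Omega^{\pm}}}_{L^2}^2\le0$, which forces $u(t)=v(t)\mathds{1}_{\Omega^{\pm}}$ since the difference vanishes at $t=0$; the crucial input there is that multiplication by $\mathds{1}_{\Omega^{\pm}}$ commutes with $\nabla_{\!\bm{A}}$ on $W^{1,(2,p)}_{\!\bm{A},0}(\dot{\Omega})$, which comes from the proof of Lemma~\ref{lem:5}. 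Your route is more structural and makes transparent that the dynamics on $\Omega^+$ and $\Omega^-$ are fully decoupled (you also get the two assertions \eqref{eq:2-semigroups-subsets} and \eqref{eq:2-semigroups-eq} simultaneously from the one decomposition); the paper's route is more self-contained in that it never needs the abstract direct-sum calculus for subdifferentials, only the concrete energy estimate. The one point you should make explicit if you write this up is the identification $W^{1,(2,p)}_{\!\bm{A},0}(\dot{\Omega})=W^{1,(2,p)}_{\!\bm{A},0}(\Omega^+)\oplus W^{1,(2,p)}_{\!\bm{A},0}(\Omega^-)$ underlying your energy decomposition; it holds because $C^{\infty}_c(\dot{\Omega})$ splits into test functions supported in the two disjoint open halves, and it is exactly the place where the lemma machinery of Section~\ref{sec:Separation} is consumed.
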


\begin{proof}
 Let $f\in L^2(\Omega)$. Due to the immediate smoothing
effect~\eqref{eq:domain-smoothing-effect-semigroup}, one has that 
\begin{displaymath}
 e^{t\dot{\Delta}_p^{\!\bm{A},D}}f\text{ and }
 e^{t\dot{\Delta}_p^{\!\bm{A},D}}[f\mathds{1}_{\Omega^{\pm}}]\in W^{1,(2,p)}_{\!\bm{A},0}(\dot{\Omega})
\end{displaymath}
for every $t>0$. Hence and by the proof of Lemma~\ref{lem:5}, one has that
  \begin{displaymath}
    \left[e^{t\dot{\Delta}_p^{\!\bm{A},D}}f\right]\mathds{1}_{\Omega^{\pm}}\in W^{1,(2,p)}_{\bm{A},0}(\Omega^{\pm}).
\end{displaymath}
For keeping the next computation clear, we set
\begin{displaymath}
  u(t)=e^{t\dot{\Delta}_p^{\!\bm{A},D}}[f\mathds{1}_{\Omega^{\pm}}]\quad\text{and}\quad
  v(t)=e^{t\dot{\Delta}_p^{\!\bm{A},D}}f
\end{displaymath}
for every $t\ge 0$. Then, one sees that
\allowdisplaybreaks
\begin{align*}
  &\frac{\td}{\dt}\tfrac{1}{2}\norm{u(t)-v(t)\mathds{1}_{\Omega^{\pm}}}_{L^2(\Omega)}^2\\
   &\quad 
    = \left\langle
     \dot{\Delta}_p^{\!\bm{A},D}u(t)
     -\left[\dot{\Delta}_p^{\!\bm{A},D}v(t)\right]\mathds{1}_{\Omega^{\pm}},
     u(t)-v(t)\mathds{1}_{\Omega^{\pm}}
     \right\rangle_{L^2(\Omega)} \\
  &\quad 
    = \left\langle
     \dot{\Delta}_p^{\!\bm{A},D}u(t),
     u(t)-v(t)\mathds{1}_{\Omega^{\pm}}
     \right\rangle_{L^2(\Omega)} \\
  &\mbox{}\hspace{2.5cm} 
    - \left\langle \dot{\Delta}_p^{\!\bm{A},D}v(t),
     u(t)\mathds{1}_{\Omega^{\pm}}-v(t)\mathds{1}_{\Omega^{\pm}}
     \right\rangle_{L^2(\Omega)} \\
   &\quad
    = -\int_{\Omega}\labs{\nabla_{\!\bm{B}}u(t)}_{\Euc}^{p-2}
    \langle
     \nabla_{\!\bm{B}}u(t),\nabla_{\!\bm{B}}(u(t)-v(t)\mathds{1}_{\Omega^{\pm}})\rangle_{\Euc}\,\dx\\
  &\mbox{}\hspace{2cm}
    +\int_{\Omega}\labs{\nabla_{\!\bm{B}}v(t)}_{\Euc}^{p-2}
    \langle
     \nabla_{\!\bm{B}}v(t),\nabla_{\!\bm{B}}(u(t)\mathds{1}_{\Omega^{\pm}}-v(t)\mathds{1}_{\Omega^{\pm}})\rangle_{\Euc}\,\dx\\
   &\quad
    = -\int_{\Omega}\labs{\nabla_{\!\bm{B}}u(t)}_{\Euc}^{p-2}
    \langle
     \nabla_{\!\bm{B}}u(t),\nabla_{\!\bm{B}}(u(t)-v(t)\mathds{1}_{\Omega^{\pm}})\rangle_{\Euc}\,\dx\\
  &\mbox{}\hspace{1.5cm}
    +\int_{\Omega}\labs{\nabla_{\!\bm{B}}v(t)\mathds{1}_{\Omega^{\pm}}}_{\Euc}^{p-2}
    \langle
     \nabla_{\!\bm{B}}v(t)\mathds{1}_{\Omega^{\pm}},\nabla_{\!\bm{B}}(u(t)-v(t)\mathds{1}_{\Omega^{\pm}})\rangle_{\Euc}\,\dx\\
  &\quad \le 0
\end{align*}
for a.e. $t>0$, where used the monotonicity of the power function
$x\mapsto \abs{x}^{p-2}_{\Euc}x$. From this, we can conclude that 
\begin{displaymath}
  e^{t\dot{\Delta}_p^{\!\bm{A},D}}[f\mathds{1}_{\Omega^{\pm}}]=\left[e^{t\dot{\Delta}_p^{\!\bm{A},D}}f\right]
  \mathds{1}_{\Omega^{\pm}}
\end{displaymath}
for every $t\ge 0$, and as by Lemma~\ref{lem:equal-energy-identity}, the
functionals $\E^{\Omega}$ and $\E^{\dot{\Omega}}$ satisfy
\eqref{eq:equality-of-EandEdotOmega}, the two semigroups
$\{e^{t\Delta^{\!\bm{A},D}_{p}}\}_{t\ge 0}$ and
$\{e^{t\dot{\Delta}^{\!\bm{A},D}_{p}}\}_{t\ge 0}$ coincide. From this,
we can conclude the statements of this theorem.
\end{proof}
%
%
%
%


\begin{thebibliography}{10}

\bibitem{MR2722059}
{\sc M.~Badiale and E.~Serra}, {\em Semilinear elliptic equations for
  beginners}, Universitext, Springer, London, 2011.
\newblock Existence results via the variational approach.

\bibitem{MR1386760}
{\sc D.~Bakry, T.~Coulhon, M.~Ledoux, and L.~Saloff-Coste}, {\em Sobolev
  inequalities in disguise}, Indiana Univ. Math. J., 44 (1995), pp.~1033--1074.

\bibitem{MR2582280}
{\sc V.~Barbu}, {\em Nonlinear differential equations of monotone types in
  {B}anach spaces}, Springer Monographs in Mathematics, Springer, New York,
  2010.

\bibitem{MR648452}
{\sc P.~B\'{e}nilan and M.~G. Crandall}, {\em Regularizing effects of
  homogeneous evolution equations}, in Contributions to analysis and geometry
  ({B}altimore, {M}d., 1980), Johns Hopkins Univ. Press, Baltimore, Md., 1981,
  pp.~23--39.

\bibitem{MR1164641}
{\sc P.~B{\'e}nilan and M.~G. Crandall}, {\em Completely accretive operators},
  in Semigroup theory and evolution equations ({D}elft, 1989), vol.~135 of
  Lecture Notes in Pure and Appl. Math., Dekker, New York, 1991, pp.~41--75.

\bibitem{MR2240671}
{\sc T.~Bieske and J.~Gong}, {\em The {$P$}-{L}aplace equation on a class of
  {G}rushin-type spaces}, Proc. Amer. Math. Soc., 134 (2006), pp.~3585--3594.

\bibitem{BEO19}
{\sc S.~Bortz, M.~Egert, and O.~Saari}, {\em Sobolev contractivity of gradient
  flow maximal functions}, arXiv,  (2019).
\newblock
  \href{https://arxiv.org/abs/1910.13150v1}{https://arxiv.org/abs/1910.13150v1}.

\bibitem{MR4242322}
\leavevmode\vrule height 2pt depth -1.6pt width 23pt, {\em Note on
  time-regularity for weak solutions to parabolic systems of {$p$}-{L}aplace
  type}, Proc. Amer. Math. Soc., 149 (2021), pp.~1677--1685.

\bibitem{BH}
{\sc N.~Bouleau and F.~Hirsch}, {\em Dirichlet forms and analysis on {W}iener
  space}, vol.~14 of De Gruyter Studies in Mathematics, Walter de Gruyter \&
  Co., Berlin, 1991.

\bibitem{MR0348562}
{\sc H.~Br{\'e}zis}, {\em Op\'erateurs maximaux monotones et semi-groupes de
  contractions dans les espaces de {H}ilbert}, North-Holland Publishing Co.,
  Amsterdam, 1973.
\newblock North-Holland Mathematics Studies, No. 5. Notas de Matem{\'a}tica
  (50).

\bibitem{MR2759829}
{\sc H.~Brezis}, {\em Functional analysis, {S}obolev spaces and partial
  differential equations}, Universitext, Springer, New York, 2011.

\bibitem{MR3097258}
{\sc X.~Cabr\'{e} and X.~Ros-Oton}, {\em Sobolev and isoperimetric inequalities
  with monomial weights}, J. Differential Equations, 255 (2013),
  pp.~4312--4336.

\bibitem{MR4611379}
{\sc L.~Capogna, G.~Citti, and X.~Zhong}, {\em Lipschitz regularity for
  solutions of the parabolic {$p$}-{L}aplacian in the {H}eisenberg group}, Ann.
  Fenn. Math., 48 (2023), pp.~411--428.

\bibitem{MR2312336}
{\sc L.~Capogna, D.~Danielli, S.~D. Pauls, and J.~T. Tyson}, {\em An
  introduction to the {H}eisenberg group and the sub-{R}iemannian isoperimetric
  problem}, vol.~259 of Progress in Mathematics, Birkh\"{a}user Verlag, Basel,
  2007.

\bibitem{MR3465809}
{\sc R.~Chill, D.~Hauer, and J.~Kennedy}, {\em Nonlinear semigroups generated
  by {$j$}-elliptic functionals}, J. Math. Pures Appl. (9), 105 (2016),
  pp.~415--450.

\bibitem{MR2001444}
{\sc T.~Coulhon and X.~T. Duong}, {\em Riesz transform and related inequalities
  on noncompact {R}iemannian manifolds}, Comm. Pure Appl. Math., 56 (2003),
  pp.~1728--1751.

\bibitem{CoulHau2017}
{\sc T.~Coulhon and D.~Hauer}, {\em Regularisation effects of nonlinear
  semigroups}, ArXiv,  (2016), pp.~1--208.
\newblock
  \href{http://arxiv.org/abs/1604.08737}{http://arxiv.org/abs/1604.08737}.

\bibitem{CoSC}
{\sc T.~Coulhon and L.~Saloff-Coste}, {\em Isop\'{e}rim\'{e}trie pour les
  groupes et les vari\'{e}t\'{e}s}, Rev. Mat. Iberoamericana, 9 (1993),
  pp.~293--314.

\bibitem{MR2396848}
{\sc T.~Coulhon and A.~Sikora}, {\em Gaussian heat kernel upper bounds via the
  {P}hragm\'{e}n-{L}indel\"{o}f theorem}, Proc. Lond. Math. Soc. (3), 96
  (2008), pp.~507--544.

\bibitem{MR1230384}
{\sc E.~DiBenedetto}, {\em Degenerate parabolic equations}, Universitext,
  Springer-Verlag, New York, 1993.

\bibitem{MR2865434}
{\sc E.~DiBenedetto, U.~Gianazza, and V.~Vespri}, {\em Harnack's inequality for
  degenerate and singular parabolic equations}, Springer Monographs in
  Mathematics, Springer, New York, 2012.

\bibitem{DS}
{\sc J.~Dziuba\'{n}ski and A.~Sikora}, {\em Lie group approach to {G}rushin
  operators}, J. Lie Theory, 31 (2021), pp.~1--14.

\bibitem{MR2778606}
{\sc M.~Fukushima, Y.~Oshima, and M.~Takeda}, {\em Dirichlet forms and
  symmetric {M}arkov processes}, vol.~19 of De Gruyter Studies in Mathematics,
  Walter de Gruyter \& Co., Berlin, extended~ed., 2011.

\bibitem{Gru}
{\sc V.~V. Gru\v{s}in}, {\em A certain class of hypoelliptic operators}, Mat.
  Sb. (N.S.), 83 (125) (1970), pp.~456--473.

\bibitem{hauer07}
{\sc D.~Hauer}, {\em Nonlinear heat equations associated with convex
  functionals - an introduction based on the {D}irichlet p-{L}aplace operator},
  Master's thesis, Institute of Applied Analysis, 2007.

\bibitem{zbMATH06275549}
\leavevmode\vrule height 2pt depth -1.6pt width 23pt, {\em Evolutionsprobleme
  f{\"u}r den {{\(p\)}}-Laplace operator. Asymptotisches Verhalten und
  Nichtexistenz / Probl{\`e}mes d'{\'e}volution associ{\'e}s au
  {{\(p\)}}-laplacien. {Comportement} asymptotique et non existence}, Nancy:
  Univ. de Lorraine; Ulm: Univ. Ulm, Fakult{\"a}t f{\"u}r Mathematik und
  Wirtschaftswissenschaften (Diss.), 2012.

\bibitem{MR4200826}
\leavevmode\vrule height 2pt depth -1.6pt width 23pt, {\em Regularizing effect
  of homogeneous evolution equations with perturbation}, Nonlinear Anal., 206
  (2021), pp.~Paper No. 112245, 34.

\bibitem{MR4031770}
{\sc D.~Hauer and J.~M. Maz\'{o}n}, {\em Regularizing effects of homogeneous
  evolution equations: the case of homogeneity order zero}, J. Evol. Equ., 19
  (2019), pp.~965--996.

\bibitem{MR1207810}
{\sc J.~Heinonen, T.~Kilpel\"{a}inen, and O.~Martio}, {\em Nonlinear potential
  theory of degenerate elliptic equations}, Oxford Mathematical Monographs, The
  Clarendon Press, Oxford University Press, New York, 1993.
\newblock Oxford Science Publications.

\bibitem{MR4598995}
{\sc J.~Z. Huang and X.~X. Yang}, {\em {$P$}-{L}aplace equation for the
  {G}rushin type operator}, Acta Math. Sin. (Engl. Ser.), 39 (2023),
  pp.~923--938.

\bibitem{LiP}
{\sc P.~Lindqvist}, {\em Notes on the stationary {$p$}-{L}aplace equation},
  SpringerBriefs in Mathematics, Springer, Cham, 2019.

\bibitem{MR2273962}
{\sc R.~Monti}, {\em Sobolev inequalities for weighted gradients}, Comm.
  Partial Differential Equations, 31 (2006), pp.~1479--1504.

\bibitem{MR2219239}
{\sc R.~Monti and D.~Morbidelli}, {\em Kelvin transform for {G}rushin operators
  and critical semilinear equations}, Duke Math. J., 131 (2006), pp.~167--202.

\bibitem{MR3444525}
{\sc D.~Ricciotti}, {\em {$p$}-{L}aplace equation in the {H}eisenberg group},
  SpringerBriefs in Mathematics, Springer, [Cham]; BCAM Basque Center for
  Applied Mathematics, Bilbao, 2015.
\newblock Regularity of solutions, BCAM SpringerBriefs.

\bibitem{RSik}
{\sc D.~W. Robinson and A.~Sikora}, {\em Analysis of degenerate elliptic
  operators of {G}ru\v{s}in type}, Math. Z., 260 (2008), pp.~475--508.

\bibitem{MR3250798}
\leavevmode\vrule height 2pt depth -1.6pt width 23pt, {\em The limitations of
  the {P}oincar\'{e} inequality for {G}ru\v{s}in type operators}, J. Evol.
  Equ., 14 (2014), pp.~535--563.

\bibitem{Rsik2}
\leavevmode\vrule height 2pt depth -1.6pt width 23pt, {\em Gru\v{s}in
  operators, {R}iesz transforms and nilpotent {L}ie groups}, Math. Z., 282
  (2016), pp.~461--472.

\bibitem{MR1776969}
{\sc E.~Russ}, {\em Riesz transforms on graphs for {$1\leq p\leq 2$}}, Math.
  Scand., 87 (2000), pp.~133--160.

\bibitem{ERSZ}
{\sc A.~F.~M. ter Elst, D.~W. Robinson, A.~Sikora, and Y.~Zhu}, {\em
  Second-order operators with degenerate coefficients}, Proc. Lond. Math. Soc.
  (3), 95 (2007), pp.~299--328.

\bibitem{MR1218884}
{\sc N.~T. Varopoulos, L.~Saloff-Coste, and T.~Coulhon}, {\em Analysis and
  geometry on groups}, vol.~100 of Cambridge Tracts in Mathematics, Cambridge
  University Press, Cambridge, 1992.

\bibitem{MR554377}
{\sc L.~V{\'e}ron}, {\em Effets r\'egularisants de semi-groupes non lin\'eaires
  dans des espaces de {B}anach}, Ann. Fac. Sci. Toulouse Math. (5), 1 (1979),
  pp.~171--200.

\end{thebibliography}

\end{document}